\definecolor{darkred}{RGB}{139,0,0}
\definecolor{darkgreen}{RGB}{0,100,0}
\definecolor{darkmagenta}{RGB}{139,0,139}
\definecolor{darkpurple}{RGB}{110,0,180}
\definecolor{darkblue}{RGB}{40,0,200}
\definecolor{darkorange}{RGB}{255,140,0}
\newcommand{\fa}{{\mathfrak a}}
\def\R{\mathbb{R}}
\def\N{\mathbb{N}}
\def\IC{\mathbb{C}}
\def\IN{\mathbb{N}}
\def\IR{\mathbb{R}}
\def\IE{\mathbb{E}}
\newcommand{\C}{\mathbb{C}}
\newcommand{\Oc}{\mathscr{O}}
\newcommand{\cB}{\mathcal{B}}
\newcommand{\calD}{\mathcal{D}}
\newcommand{\cL}{\mathcal{L}}
\newcommand{\cN}{\mathcal{N}}
\newcommand{\cR}{\mathcal{R}}
\newcommand{\cT}{\mathcal{T}}
\newcommand{\cG}{\mathcal{G}}
\newcommand{\cO}{\mathcal{O}}
\newcommand{\cX}{\mathcal{X}}
\newcommand{\bcX}{\mathcal{X}}
\newcommand{\bcY}{\mathcal{Y}}
\newcommand{\bcZ}{\mathcal{Z}}
\newcommand{\cY}{\mathcal{Y}}
\newcommand{\bg}{{\bf g}}
\newcommand{\bpi}{{\boldsymbol \pi}}
\newcommand{\cZ}{\mathcal{Z}}
\newcommand{\eps}{\varepsilon}
\newcommand{\bsnull}{{\boldsymbol 0}}
\newcommand{\bsbeta}{\boldsymbol{\beta}}
\newcommand{\bsrho}{{\boldsymbol{\rho}}}
\newcommand{\bsb}{{\boldsymbol{b}}}
\newcommand{\bsk}{{\boldsymbol{k}}}
\newcommand{\bsx}{{\boldsymbol{x}}}
\newcommand{\bsy}{{\boldsymbol{y}}}
\newcommand{\bsz}{{\boldsymbol{z}}}
\newcommand{\rd}{\mathrm{d}}
\newcommand{\dd}{\mathrm{d}}
\newcommand{\bbQ}{\mathbb{Q}}
\newcommand{\bbR}{\mathbb{R}}
\newcommand{\bbC}{\mathbb{C}}
\newcommand{\bbJ}{\mathbb{N}}
\newcommand{\bbN}{\mathbb{N}}
\newcommand{\bbE}{\mathbb{E}}
\newcommand{\calO}{\mathcal{O}}
\newcommand{\calM}{\mathcal{M}}
\newcommand{\KL}{Karhunen-Lo\`eve }
\newcommand{\mask}[1]{{}}
\newcommand{\bszero}{{\boldsymbol{0}}}
\numberwithin{equation}{section}
\theoremstyle{plain}
\newtheorem{theorem}{Theorem}[section]
\newtheorem{prop}[theorem]{Proposition}
\newtheorem{assumption}{Assumption}
\theoremstyle{definition}
\newtheorem{definition}{Definition}[section]
\newtheorem{remark}{Remark}[section]
\newcommand{\bz}{{\bf z}}
\newcommand{\bsone}{{\boldsymbol{1}}}
\newcommand{\calS}{\mathcal S}
\newcommand{\be}{\begin{equation}}
\newcommand{\ee}{\end{equation}}
\newcommand{\ba}{\begin{array}}
\newcommand{\ea}{\end{array}}
\newcommand{\beas}{\begin{eqnarray*}}
\newcommand{\eeas}{\end{eqnarray*}}
\newcommand{\bea}{\begin{eqnarray}}
\newcommand{\eea}{\end{eqnarray}}
\title{Multilevel higher order Quasi-Monte Carlo 
      \\
      Bayesian Estimation
}
\author{
Josef Dick, Robert N.~Gantner, Quoc T.~Le Gia, and Christoph Schwab
}
\date{\today}
\begin{document}
\maketitle
\begin{abstract}
We propose and analyze deterministic multilevel approximations for 
Bayesian inversion of operator equations with 
uncertain distributed parameters, subject to additive gaussian 
measurement data. 
The algorithms use a multilevel (ML) approach based on deterministic, 
higher order quasi-Monte Carlo (HoQMC) 
quadrature for approximating the high-dimensional expectations, which arise in the
Bayesian estimators, and a Petrov-Galerkin (PG) method for approximating the solution 
to the underlying partial differential equation (PDE). 
This extends the previous single-level approach from 
[J.~Dick, R.~N.~Gantner, Q.~T.~Le Gia and Ch.~Schwab, 
  Higher order Quasi-Monte Carlo integration for Bayesian Estimation. 
  Report 2016-13, Seminar for Applied Mathematics, ETH Z\"urich (in review)]. 

Compared to the single-level approach, the present convergence analysis of the 
multilevel method requires stronger assumptions on holomorphy and regularity 
of the countably-parametric uncertainty-to-observation maps of the forward problem. 
As in the single-level case and in the affine-parametric case analyzed in 
[ J.~Dick, F.Y.~Kuo, Q.~T.~Le Gia and Ch.~Schwab,
  Multi-level higher order QMC Galerkin discretization for affine parametric operator equations. 
Accepted for publication in SIAM J. Numer. Anal., 2016], 
we obtain sufficient conditions which allow us to achieve arbitrarily high, algebraic
convergence rates in terms of work, 
which are independent of the dimension of the parameter space. 
The convergence rates are limited only by the spatial regularity of the forward problem,
the discretization order achieved by the Petrov Galerkin discretization, and 
by the sparsity of the uncertainty parametrization.

We provide detailed numerical experiments for linear elliptic problems in two space dimensions, with $s=1024$ parameters characterizing the uncertain input, 
confirming the theory and showing that the ML HoQMC algorithms outperform, 
in terms of error vs.~computational work, 
both multilevel Monte Carlo (MLMC) methods and single-level (SL) HoQMC methods.
\end{abstract}

Key words: 
Higher order Quasi-Monte Carlo, parametric operator equations, 
infinite-dimensional quadrature, Bayesian inverse problems, Uncertainty Quantification, 
CBC construction, SPOD weights.
\newpage
\setcounter{page}{0}
\tableofcontents
\newpage
\section{Introduction}
\label{sec:Intro}
In \cite{DGLGCSBIPSL} we proposed and analyzed the convergence rates of higher order Quasi-Monte Carlo (HoQMC) approximations of conditional expectations which arise in Bayesian Inverse problems for partial differential equations (PDEs). 
We studied broad classes of parametric operator equations
with \emph{distributed uncertain parametric input data}. 
Typical examples are elliptic or parabolic partial differential equations with uncertain, 
spatially inhomogeneous coefficients, but also differential and integral equations in uncertain domains of definition. 
Upon suitable \emph{uncertainty parametrization}, and with a suitable Bayesian prior measure placed on the, 
in general, infinite-dimensional parameter space, the task of numerical evaluation 
of Bayesian estimates for quantities of interest (QoI's) becomes that of 
numerical computation of parametric, deterministic integrals over a high-dimensional parameter space. 
As an alternative to the Markov chain Monte Carlo (MCMC) method, in \cite{SS12,SS13} 
it was proposed to apply recently developed, dimension-adaptive 
Smolyak quadrature techniques to the evaluation of the corresponding integrals. 
In \cite{DGLGCSBIPSL} we developed a convergence theory for HoQMC 
integration for the numerical evaluation of the corresponding integrals, 
based on our earlier work \cite{DKGNS13} on these methods in forward uncertainty quantification (UQ). 
In particular, we proved that \emph{dimension-independent} convergence rates of order $> 1/2$ in terms of the number $N$ of approximate solves 
of the forward problem can be achieved by replacing Monte Carlo or MCMC sampling 
of the Bayesian posterior with 
judiciously chosen, \emph{deterministic} HoQMC quadratures. 
The achievable, dimension-independent rate in the proposed algorithms is, in principle, 
as high as the sparsity of the forward map permits, and ``embarrassingly parallel'': 
being QMC algorithms they access, unlike MCMC and sequential Monte Carlo (SMC) methods, 
the forward problem simultaneously and in parallel.
The error analysis in \cite{DGLGCSBIPSL} accounted for 
the quadrature error as well as for the errors incurred by 
a Petrov-Galerkin (PG) discretization and dimension truncation
of the forward problem, but
was performed for a single-level algorithm, 
i.e.~the PG discretization of the forward model in
all QMC quadrature points was based on the same subspace.
As is well known in the context of Monte Carlo methods,
\emph{multilevel strategies} can lead to substantial gains
in accuracy versus work. We refer to \cite{GilesActa}, and the 
references there, for a survey on multilevel Monte-Carlo methods.
Multilevel discretizations for QMC integration were explored first for 
parametric, linear forward problems in \cite{KSS13,HPS16,KSSSU15} and, 
in the context of HoQMC for parametric operator equations, in \cite{DKGS14}.
For the use of multilevel strategies 
in the context of MCMC methods for Bayesian inverse problems
we refer to \cite{DodSchlTeck,HoangScSt12} and the references there.
The purpose of the present paper is to extend the convergence analysis 
of the HoQMC, Petrov-Galerkin approach from \cite{DGLGCSBIPSL} to a multilevel algorithm. 

As in our single-level HoQMC PG error analysis of Bayesian inversion
in \cite{DGLGCSBIPSL}, we adopt the abstract setting 
of Bayesian inverse problems in infinite-dimensional function spaces
from \cite{Stuart13}, 
and the convergence analysis of PG 
discretizations of abstract,  nonlinear parametric problems from
\cite{BreRapRav80,PR}, as reviewed in \cite{DGLGCSBIPSL}; 
\emph{throughout the present work, we adopt the notations
and terminology from} \cite{DGLGCSBIPSL}.

The principal contributions of the present work are as follows:
we derive, for the possibly nonlinear, parametric operator equations with
distributed, uncertain input considered in \cite{DGLGCSBIPSL},
multilevel extensions of high-order Petrov-Galerkin discretizations
of the countably-parametric forward problem combined with HoQMC
integration in the parameter domain.
We provide a complete convergence analysis of the proposed algorithm,
specifying, in particular, precise regularity and sparsity conditions
on the forward response map which are sufficient to achieve a 
certain, dimension-independent convergence rate. 
Our analysis provides,
in particular, information on the choice of algorithm parameters
in applications: the convergence order of the PG discretization,
also for functionals of the solution (not considered in \cite{DGLGCSBIPSL}),
the order of the interlaced polynomial lattice rule, 
the relation of HoQMC sample numbers $N_\ell$ and 
of truncation dimensions $s_\ell$ of the parameter space on the 
discretization level $\ell$ of the PG approximation of the forward
problem.
A major conclusion obtained with analytic continuation based 
error analysis from \cite{DLGCS14} is that
\emph{identical algorithmic steering parameters are admissible in our
      ML HoQMC algorithm for both, forward and inverse UQ}.
With optimized HoQMC sample numbers, we 
also obtain asymptotic error vs.~work bounds
which indicate that the presently proposed algorithms
outperform both, multilevel MC as well as the 
multilevel first order QMC algorithm considered in \cite{KSS13};
our analysis reveals that, analogous to sparse grid approximation,
stronger regularity requirements on the parametric forward problem 
both in physical space and in parameter space are required.

In a suite of numerical experiments, we provide PDE examples
in two space dimensions and with parameter spaces of dimension of several
thousand where the presently proposed ML algorithms
outperform all mentioned methods in terms of error vs.~work,
starting at relative errors as large as $10\%$.

The structure of this paper is as follows. 
In Section \ref{sec:HolOpEq}, we present abstract,
nonlinear parametric operator equations with uncertain input data, 
the parametrization of the uncertain input data and PG approximations
of their parametric solutions.
The setting is analogous to that from \cite{DGLGCSBIPSL}
and the presentation is thus synoptic, and analogous 
and with references to \cite[Sections 2, 3]{DGLGCSBIPSL}.
In Section \ref{sec:BayInv}, we review the general theory of
well-posed Bayesian inverse problems in function spaces,
from \cite{Stuart13}. Again, the material is analogous to
what we used in \cite{DGLGCSBIPSL}; however, the error analysis of the ML HoQMC
method, being a form of sparse grid approximation, 
requires the analog of mixed regularity, which we develop. Section \ref{sec:MLHOQMC} contains the core new mathematical
results of the present paper: several multilevel
computable Bayesian estimators and their error analysis.
In particular it contains the error vs.~work analysis of the combined 
ML HoQMC Petrov-Galerkin algorithms.
Section \ref{sec:NumExp} presents specific examples of parametric
forward problems, and verifies that they satisfy all hypotheses
of our foregoing error analysis; 
specifically, we consider linear, affine-parametric
diffusion problems in two space dimensions, 
in primal variational formulation
with space of continuous, piecewise linear functions
on regular triangulations.
Section \ref{sec:results} contains numerical tests 
of the proposed estimators 
for forward and inverse UQ for the PDE problems 
considered in Section \ref{sec:NumExp}. 
The numerical results are in agreement with the theory.
\section{Forward UQ for parametric operator equations}
\label{sec:HolOpEq}
We review the notation and mathematical setting of
forward and inverse UQ for a class of 
smooth, possibly nonlinear, parametric operator equations, for which we 
developed the error analysis of the single-level algorithm in \cite{DGLGCSBIPSL}.
We develop here the error analysis
for the multilevel extension of the algorithms in 
\cite{DGLGCSBIPSL} for a general class of 
forward problems given by smooth, 
nonlinear operator equations
with input data $u$ from a separable Banach space $X$. 
Upon uncertainty parametrization with an unconditional
basis of $X$ such as, for example, the \KL basis,
both forward and (Bayesian) inverse problems
become countably parametric, deterministic operator equations.
The problems of forward and inverse UQ are 
reformulated as countably-parametric integration problems. In the present work, we focus on the latter
and analyze the use of \emph{deterministic, higher order Quasi Monte-Carlo} 
integration methods, from \cite{DKGNS13,DKGS14,DLGCS14} 
and the references there, in multilevel algorithms for Bayesian estimation in
partial differential equations with uncertain input.

\subsection{Uncertainty parametrization}
\label{sec:Param}
As in \cite{DGLGCSBIPSL}, 
we parametrize the distributed uncertain input data $u$. 
To this end, $X$ is assumed to be a separable, infinite-dimensional Banach space 
with norm $\|\cdot\|_X$, which has an unconditional basis $\{\psi_j\}_{j\geq 1}$: 
$X = {\rm span}\{\psi_j: j\geq 1\}$. Let $\langle u \rangle \in X$ be fixed. 
Any $u \in X$ has the representation
\begin{equation}\label{eq:uviapsi}
u = u(\bsy) \sim \langle u \rangle + \sum_{j\geq 1} y_j \psi_j 
\quad \mbox{for some } \bsy = (y_j)_{j \geq 1} \in \mathbb{R}^{\mathbb{N}}
\;,
\end{equation}
where $\sim$ means convergence in the norm $\|\cdot\|_X$ of $X$.
The \KL expansions (see, e.g., \cite{ST06,SchwabGittelsonActNum11,Stuart10,Stuart13}) 
are examples of representations \eqref{eq:uviapsi}.
We remark that the representation \eqref{eq:uviapsi} is not unique: 
rescaling $y_j$ and $\psi_j$ will not change $u$.

Assume that a smoothness scale $\{ X_r \}_{r \geq 0}$,
with $X=X_0 \supset X_1 \supset X_2 \supset ...$, and a $t \ge 0$ is being given
as part of the problem specification.
We restrict the uncertain inputs
$u$ to sets $X_t$  with ``higher regularity'' 
in order to obtain convergence rate estimates for the discretization
of the forward problem. 
Note that $u\in X_t$ often corresponds to stronger decay of 
the $\psi_j$ in \eqref{eq:uviapsi}.
We assume that the $\{\psi_j\}_{j\geq 1}$ are scaled such that 
\begin{equation}\label{eq:psumpsi}
\sum_{j\geq 1} \| \psi_j \|_{X_r}^{p_r} <\infty\;, \quad \mbox{for some } 0 < p_r < 1 \mbox{ and } r = 0, \ldots, t  \;.
\end{equation}
We define the set $\tilde{X}$ by
\begin{equation}\label{eq:DefSetU}
\tilde{X} = \{u(\bsy) \in X:  
u(\bsy) \sim \langle u \rangle + \sum_{j \geq 1} y_j \psi_j: \bsy = (y_j)_{j \ge 1} \in U \},
\end{equation}
where $U = (-1, 1)^{\mathbb{N}}$. 
Further we define the sequences $\bsb_r = (b_{j,r})_{j\geq 1}$  
by $b_{j,r} :=  \| \psi_j \|_{X_r}$ for $r = 0, \ldots, t$. 
If $t=0$, we write $X_0=X$ and $\bsb_0 = \bsb$.
From \eqref{eq:psumpsi} we have $\sum_{j \ge 1} b_{j,r}^{p_r} < \infty$ 
for $r = 0, \ldots, t$. 
We also assume that the $\psi_j$ are enumerated so that 
\begin{equation} \label{eq:ordered} 
b_{1,t} \ge b_{2,t} \ge \cdots \ge b_{j,t} \ge \, \cdots\;.
\end{equation}

With a given unconditional basis $\{\psi_j\}_{j\geq 1}$,
realizations of $u\in X$ correspond to pairs $(\langle u \rangle,\bsy)$,
where $\langle u \rangle$ 
is the nominal value of uncertain data $u$
and $\bsy$ is the coordinate vector that determines 
the unique representation \eqref{eq:uviapsi}.
\subsection{Operator equations with uncertain input}
\label{sec:OpEqUncInp}
We consider the abstract forward problem: 
\be\label{eq:NonOpEqn}
\mbox{given}\;\;   u \in \tilde{X}: \,\; 
\mbox{find} \; q\in \bcX \quad \mbox{s.t.} \quad 
_{\bcY'}\langle \cR(u;q) , v \rangle_\bcY = 0  \quad \forall v\in \bcY.
\ee
Here, $X, \bcX$ and $\bcY$ are real, separable Banach spaces and 
$\cR: X \times \bcX \rightarrow \bcY'$ is the residual of a \emph{forward operator}.

A solution $q_0$ of \eqref{eq:NonOpEqn} is called {\em regular at $u$},
for a given $u \in \tilde{X}$, 
if and only if the map $\cR(u;\cdot)$ is 
Fr\'{e}chet differentiable with respect to $q$ at $q_0$
and if the differential is an isomorphism between $\bcX$ and $\bcY'$. We assume the map $\cR(u;\cdot):\bcX\to \bcY'$ 
admits a family of regular solutions 
{\em locally, in an open neighborhood of the nominal parameter instance $\langle u \rangle \in X$}, 
so that the operator equations involving $\cR(u;q)$ are well-posed. 
A particular structural assumption on $\cR$ is the representation
\begin{equation}\label{eq:main}
\cR(u;q) = A(u;q) - F(u) \quad \mbox{in}\quad \bcY'
\end{equation}
with Frechet differentiable maps 
$A: X\times \bcX \to \bcY'$ and $F: X \to \bcY'$.
The set $\{(u,q(u)): u\in \tilde{X} \} \subset X\times \bcX$ 
is called {\em regular branch of solutions} of \eqref{eq:NonOpEqn} 
(in the sense of \cite{BreRapRav80}) if
\begin{equation}\label{eq:RegBranch}
\begin{array}{l}
u \mapsto q(u) \;\mbox{{is a continuous function from}} \; X \mbox{into} \; \bcX\;,
\\
\cR(u;q(u)) = 0 \quad \mbox{in}\quad \bcY'
\;.
\end{array}
\end{equation}
The regular branch of solutions \eqref{eq:RegBranch} 
is called {\em nonsingular} if, in addition, the differential
\begin{equation}\label{eq:NonSingBranch}
(D_q\cR)(u;q(u))\in \cL(\bcX,\bcY') \;
\mbox{is an isomorphism from $\bcX$ onto $\bcY'$, for all $u\in \tilde{X}$}
\;.
\end{equation}
Conditions for well-posedness of \eqref{eq:NonOpEqn} 
are stated in 
\cite[Proposition 2.1]{DLGCS14}:
for regular branches of nonsingular solutions 
given by \eqref{eq:NonOpEqn} - \eqref{eq:NonSingBranch},
the nonsingularity of the differential
$(D_q\cR)(u,q(u))$ implies the {\em inf-sup conditions}.
%
%
Under the inf-sup conditions,
for every instance $u\in \tilde{X}$, there exists a unique, isolated solution $q(u)$ of 
\eqref{eq:NonOpEqn}, which is uniformly bounded 
in the sense that there exists
a constant $C(\cR,\tilde{X}) > 0$ such that 
\begin{equation}\label{eq:LocBdd}
\sup_{u\in \tilde{X}} \| q(u) \|_{\bcX} \leq C(\cR,\tilde{X})
\;.
\end{equation}
The set 
$\{ (u,q(u)): u\in \tilde{X}\}\subset \tilde{X}\times \bcX$
is called a {\em regular branch of nonsingular solutions}.

At every point of the regular branch
$\{ (u,q(u)): u\in \tilde{X}\}\subset \tilde{X}\times \bcX$,
if the nonlinear functional $\cR$ is 
Fr\'{e}chet differentiable with respect to $u\in \tilde{X}$ and 
Fr\'{e}chet differentiable with respect to $q$,
then the mapping relating $u$ to $q(u)$ within the branch
of nonsingular solutions is locally Lipschitz on ${\tilde{X}}$: 
\begin{equation}\label{eq:LocLip}
\forall u,v\in \tilde{X}:
\quad 
\| q(u) - q(v) \|_{\bcX} \leq L(\cR,\tilde{X}) \| u-v \|_X 
\;.
\end{equation}

In what follows, we consider the abstract
setting \eqref{eq:NonOpEqn} with 
the assumption that the mapping $\cR(u;q)$ is
uniformly continuously differentiable
with boundedly invertible differential
in a product of neighborhoods
$B_X(\langle u \rangle;R)\times B_\bcX(q(\langle u \rangle);R)
 \subset X\times \cX$, where $B$ denotes the ball with radius $R$, of sufficiently small radius $R>0$.
Then $q_0 = q(\langle u \rangle) \in \bcX$ is
the corresponding unique, isolated solution of \eqref{eq:NonOpEqn} 
at the nominal uncertain input $\langle u \rangle\in X$.
This turns \eqref{eq:NonOpEqn} into an 
{\em equivalent, deterministic, countably parametric operator equation}:
given $\bsy\in U$, find $q(\bsy)\in \cX$ such that
\begin{equation}\label{eq:paraOpEq}
\cR(\bsy;q(\bsy)) = 0 \quad\mbox{in}\quad \cY' \;.
\end{equation}
We refer to \cite[Remark. 2.1]{DGLGCSBIPSL} for further discussion.
Under \eqref{eq:LocBdd} and \eqref{eq:LocLip}, 
the operator equation \eqref{eq:NonOpEqn} will admit
a unique solution $q(\bsy;F)$ for every $\bsy\in U$. 
This solution is,
due to \eqref{eq:LocBdd}, uniformly bounded,
\begin{equation}\label{eq:ParDepy}
\sup_{\bsy\in U} \| q(\bsy) \|_{\cX} \leq C(\cR,U),
\end{equation}
and due to \eqref{eq:LocLip}, 
it depends Lipschitz continuously
on the parameter sequence $\bsy\in U$: 
%
%
there exists a Lipschitz constant $L>0$ such that 
\begin{equation}\label{eq:ParLip}
\forall \bsy,\bsy'\in U:\quad 
\| q(\bsy) - q(\bsy') \|_\cX \leq L(\cR,U) \| u(\bsy) - u(\bsy') \|_X
\;.
\end{equation}
The Lipschitz constant $L>0$ in \eqref{eq:ParLip} 
is generally different from the constant
$L(\cR,\tilde{X})$ in \eqref{eq:LocLip}, as it depends
on $\langle u\rangle\in X$ and on the basis $\{\psi_j\}_{j\geq 1}$.
\emph{
Unless explicitly stated otherwise, 
throughout what follows, we shall identify 
$q_0 = q(\bsnull) \in \cX$ with the solution of \eqref{eq:NonOpEqn} at the
nominal input $\langle u \rangle \in X$.
}

Note that $u\in X_t$ implies that $\cR(\cdot)\in \bcY'_t$ and $q(u)\in \bcX_t$, 
with corresponding subspaces $\bcX_t\subset \bcX$ and $\bcY'_t\subset \bcY'$
with extra regularity from suitable scales. 
If $t=0$, we write  $\bcX_0 = \bcX$, $\bcY_0 = \bcY$, etc.. 
\subsection{Dimension truncation}
\label{sec:dimtrunc}
Dimension truncation is equivalent to setting $y_j=0$ 
for $j>s$, for a given $s\in \N$, in \eqref{eq:uviapsi}.
The truncated uncertain datum is denoted by $u^s \in X$.
We denote by $q^s(\bsy)$ the solution of the corresponding
parametric weak problem \eqref{eq:paraOpEq}. 
For $\bsy\in U$, define 
$\bsy_{\{1:s\}} := (y_1,y_2,...,y_s,0,0,...)$. 
Then unique solvability of \eqref{eq:paraOpEq} implies 
$q^s(\bsy) = q(\bsy_{\{1:s\}})$.
Consider the $s$-term truncated parametric problem: 
given $u^s = u(\bsy_{\{ 1: s \}}) \in \tilde{X}$,
\begin{equation}\label{eq:mainstrunc}
\mbox{find}\;q^s\in \bcX:\quad 
{ _{\bcY'} \langle \cR(u^s;q^s), w \rangle_{\bcY} } = 0 
\;\;\forall w \in \bcY 
\;.
\end{equation}
As shown in \cite[Prop.~2.2]{DGLGCSBIPSL}, \cite[Thm.~5.1]{KSS12},
under Assumption~\eqref{eq:psumpsi},
and under the uniform regularity shift in \eqref{eq:UPR} ahead,
for every $F \in \cY_t'$, 
for every $s\in\bbN$ and for every $\bsy\in U$
the parametric solution $q^s(\bsy)$ of the 
truncated parametric weak problem \eqref{eq:paraOpEq} 
with $u$ replaced by $u^s$ satisfies, 
with $b_{t,j}$ as defined in \eqref{eq:psumpsi},
\begin{equation}\label{eq:Vdimtrunc}
\sup_{\bsy\in U}
\| q(\bsy) - q^s(\bsy) \|_{\bcX_t} \,\le\, C(\cR,X) \sum_{j\ge s+1} b_{t,j}
\;.
\end{equation}
Moreover, for every $G \in \cX_t'$,
there exists $\theta \in \{ 1,2 \}$ such that
\begin{equation}\label{eq:Idimtrunc}
  |I(G(q))- I(G(q^s))|
  \,\le\, \tilde{C} 
  \left( \sum_{j\ge s+1} b_{t,j} \right)^\theta
\end{equation}
where
\begin{equation*}
I(G(q)) = \int_U G(q(\bsy)) \, \bpi(\mathrm{d} \bsy)
\mbox{ and } 
I(G(q^s)) = \int_{[-1,1]^s} G(q(y_1,\ldots, y_s, 0,\ldots)) \,
\bpi(\mathrm{d}y_1 \cdots \,\mathrm{d} y_s),
\end{equation*}
for some constant $\tilde{C}>0$ independent of $s$.
\begin{remark}\label{rmk:dimtrc}
In \eqref{eq:Idimtrunc}, generally $\theta = 1$.
There holds $\theta = 2$ if
$\int_{-1}^1 y_j \pi_j(\mathrm{d}y_j) = 0$, as, e.g.
for the uniform measure (see, e.g.~\cite[Thm.~5.1]{KSS12}).
If conditions \eqref{eq:psumpsi} 
and \eqref{eq:ordered} hold, 
then in \eqref{eq:Vdimtrunc} and \eqref{eq:Idimtrunc} 
\begin{equation}\label{eq:DTbound}
  \sum_{j\ge s+1} b_{t,j}
  \,\le\,
  \min\left(\frac{1}{1/p_t - 1},1\right)
  \bigg(\sum_{j\ge1} b_{t,j}^{p_t} \bigg)^{1/p_t} s^{-(1/p_t - 1)}
  \;.
\end{equation}
\end{remark}
%
\subsection{Petrov-Galerkin discretization}
\label{sec:Discr}
In \cite[Chap. IV.3]{GR90} and in \cite{PR}, a convergence rate analysis of 
Petrov-Galerkin discretizations of regular branches of solutions
for smooth, nonlinear forward problems \eqref{eq:NonOpEqn}
was developed. This setting was adopted in \cite{DGLGCSBIPSL} and
is also the basis for the presently considered multi-level extension
of the single-level version of the present work \cite{DGLGCSBIPSL}.
As in \cite{ScMCQMC12,DKGNS13}, we assume that we are given two sequences
$\{ \cX^h \}_{h>0}\subset\cX$ and $\{ \cY^h \}_{h>0}\subset\cY$ of 
finite dimensional subspaces which are dense in $\cX$ and in $\cY$, respectively. 
We assume the following 
{\em regularity properties} and {\em approximation properties}: 

\noindent
{\bf uniform parametric regularity property (UPR)}: 
there are scales $\{ \bcX_t \}_{t\geq 0}$ and $\{ \bcY_t \}_{t\geq 0}$
of function spaces such that 
$\cX_{t'} \subset \cX_t \subset \cX_0 = \cX$ 
and 
$\cX'_{t'} \subset \cX'_t \subset \cX_0' = \cX'$
for any $0<t<t'<\infty$ 
and analogously for $\bcY_t$, such that there holds the uniform regularity
shift:
for $\cR$ as in \eqref{eq:main},
for every $F\in \bcY'_{t}$, $G\in \bcX'_{t}$, 
the parametric solutions
$q(\bsy) = (A(\bsy))^{-1}F$ and $q^*(\bsy) = (A^*(\bsy))^{-1} G$,
where $A^*$ is the conjugate operator of $A$, satisfy
regularity resp. adjoint regularity shifts which are uniform w.r. to $\bsy$, 
i.e.
\begin{equation}\label{eq:UPR}
\sup_{\bsy \in U} \| q(\bsy) \|_{\bcX_t} \leq C(t) \| F \|_{\bcY'_t} \;,
\quad 
\sup_{\bsy \in U} \| q^*(\bsy) \|_{\bcY_t} \leq C(t) \| G \|_{\bcX'_t} 
\;.
\end{equation}
We also assume {\bf approximation properties}:
for $0 < t,t' \leq \bar{t}$ and for $0< h \leq h_0$ holds
\begin{equation} \label{eq:apprprop}
\inf_{w^h\in \cX^h} \| w - w^h \|_{\cX}
\,\leq\, C_t\, h^t\, \| w \|_{\cX_t} \;,
\;\;
\inf_{v^h\in \cY^h} \| v - v^h \|_{\cY}
\,\leq\, C_{t'}\, h^{t'}\, \| v \|_{\cY_{t'}} 
\;.
\end{equation}
%
Assume that the subspace range
$\{ \cX^h \}_{h>0}\subset\cX$ and $\{ \cY^h \}_{h>0}\subset\cY$ are stable
in the following sense:
there is $\bar{\mu} > 0$ and a discretization parameter $h_0 > 0$
such that for every $0<h \leq h_0$, 
the discrete inf-sup conditions hold uniformly (with respect to $\bsy\in U$)
\begin{align}\label{eq:Bhinfsup1}
&\forall \bsy \in U:
\quad
\inf_{0\ne v^h \in \bcX^h} \sup_{0\ne w^h \in \bcY^h}
\frac{
 _{\bcY'}\langle  (D_q\cR)(u(\bsy);q_0)v^h,w^h\rangle_\bcY
}{
\| v^h \|_\bcX \| w^h \|_{\bcY}
}
\geq \bar{\mu} > 0\;,
\\
\label{eq:Bhinfsup2}
&\forall \bsy\in U:\quad
\inf_{0\ne w^h \in \bcY^h} \sup_{0\ne v^h \in \bcX^h}
\frac{_{\bcY'}\langle (D_q\cR)(u(\bsy);q_0)v^h,w^h\rangle_\bcY}
     {\| v^h \|_{\bcX} \|w^h\|_{\bcY}}
\geq \bar{\mu}>0
\;.
\end{align}
Then, for every $0<h \leq h_0$, 
the solutions $q_h$ of the Petrov-Galerkin approximation problem:
\begin{equation} \label{eq:parmOpEqh}
\mbox{ given }\bsy\in U, \;
\mbox{find} \; q_h(\bsy) \in \bcX^h :
\quad
{_{\bcY'}}\langle \cR(\bsy; q_h(\bsy)), w^h \rangle_{\bcY} = 0
\quad 
\forall w^h\in \bcY^h\;,
\end{equation}
are uniquely defined and converge quasioptimally:
there is a positive constant $C$ such that for all $\bsy\in U$
\begin{equation} \label{eq:quasiopt}
 \| q(\bsy) - q_h(\bsy) \|_{\cX}
 \,\le\, 
\frac{C}{\bar{\mu}} \inf_{0\ne v^h\in \cX^h} \| q(\bsy) - v^h\|_{\cX}
\;.
\end{equation}
If $q(\bsy)\in \bcX_t$ for all $\bsy \in U$
and if \eqref{eq:apprprop} holds, then for every $\bsy\in U$ 
\begin{equation}\label{eq:convrate}
\| q(\bsy) - q_h(\bsy) \|_{\cX}
\,\le\, 
\frac{C}{\bar{\mu}} h^t \sup_{\bsy\in U} \| q(\bsy) \|_{\bcX_t} 
\;.
\end{equation}
Moreover, 
for sufficiently large truncation dimension
$s\in \IN$, for given $\bsy_{\{1:s\}}\in U$ 
the solutions of the following truncated Galerkin problems
\begin{equation} \label{eq:parmOpEqh_trun}
\mbox{find} \; q_h(\bsy_{\{1:s\}}) \in \bcX^h :
\quad
{{_{\bcY'}}\langle  \cR(\bsy_{\{1:s\}}; q_h(\bsy_{\{1:s\}})), w^h \rangle_{\bcY} } = 0
\quad 
\forall w^h\in \bcY^h
\;,
\end{equation}
admit unique solutions $q_h(\bsy_{\{1:s\}}) \in \bcX^h$ which
converge quasioptimally to $q(\bsy_{\{1:s\}}) \in \bcX$ as $h\downarrow 0$,  
i.e.~\eqref{eq:quasiopt} and \eqref{eq:convrate} hold 
with $\bsy_{\{1:s\}}$ in place of $\bsy$, with the same constants
$C>0$ and $\bar{\mu}$ independent of $s$ and of $h$.
%
\section{Bayesian inverse UQ and HoQMC approximation}
\label{sec:BayInv}
%
In this section, the abstract (possibly nonlinear) operator equation \eqref{eq:NonOpEqn}
is considered again.
The system's forcing $F\in \cY'$ 
is allowed to depend on the uncertain input $u$ and
the uncertain operator $A(u;\cdot)\in \cL(\cX,\cY')$ 
is boundedly invertible, for the uncertain input $u$,
sufficiently close to a nominal instance of the
uncertain input data $\langle u\rangle \in X$. 
We define the {\em forward response map},
which maps a given uncertain input $u$ and a given forcing $F$
in \eqref{eq:main} to the response $q$ in \eqref{eq:NonOpEqn} by
\begin{align*}
 G: X\times \bcY' &\to \cX \;: \quad G(u,F)        := q(u)\;.
\end{align*}
In the general case \eqref{eq:NonOpEqn}, we omit $F$
and denote the dependence of the forward solution on 
the uncertain input as $G(u)=q(u)$.
A {\em bounded linear observation operator} 
on the space $\cX$ of observed system responses in $Y$
is given and denoted by $\cO: \cX \rightarrow Y$.
Throughout the remainder of this paper,
we assume that $Y = \IR^K$ with $K<\infty$, then~$\cO\in \cL(\bcX; Y) \simeq (\cX^*)^K$.
The space $Y = \R^K$ is equipped with the Euclidean norm, denoted by $|\cdot|$.

We assume the following form of observed data,
composed of the observed system response
and the additive noise $\eta$
\begin{equation} \label{eq:DatDelta}
    \delta = \cO(G(u)) + \eta \,\in Y \;.
\end{equation}
The additive observation
noise process $\eta$ is assumed to be Gaussian, 
i.e.~a random vector $\eta \sim \bbQ_0 = \cN(0,\Gamma)$
with a positive definite covariance $\Gamma$ on $\R^K$.

The {\em uncertainty-to-observation map} 
$\cG:X \to \R^K$ then reads $\cG=\cO \circ G$, 
so that
\begin{equation}\label{eq:delta}
    \delta 
    = 
    \cG(u) + \eta = \cO(G(u)) + \eta. 
\end{equation}
When $\eta$ varies randomly, 
$\delta$ varies in $L^2_\Gamma(\R^K)$, the space of random vectors taking
values in $Y=\R^K$ which are square integrable with respect
to the Gaussian measure with covariance matrix $\Gamma$.
Bayes' formula from \cite{Stuart10,Stuart13} 
implies, for the $s$-variate parametric problems,
existence of a density of the Bayesian posterior with respect to the prior. 
Its negative log-likelihood equals the 
observation noise covariance-weighted, 
least squares functional (``potential'')
$\Phi_\Gamma:X \times Y \to \bbR$ 
by 
\begin{equation}\label{eq:DefPhiu}
\Phi_\Gamma(u;\delta)
=
\frac12|\delta - \cG(u) |_{\Gamma}^2
:=
\frac12
\left(
(\delta - \cG(u))^\top \Gamma^{-1} (\delta - \cG(u))
\right)
\;.
\end{equation}
%
%

\subsection{Well-posedness and approximation}
\label{sec:WellPosAppr}
In deterministic problems,
the data-to-solution maps is often ill-posed.
In the presently considered setting,
for a positive definite covariance $\Gamma$,
the expectations \eqref{eq:intpsi} are Lipschitz continuous
with respect to the data $\delta$,
{\em provided that the potential $\Phi_\Gamma$
     in \eqref{eq:DefPhiu} is locally
     Lipschitz with respect to the data $\delta$}. 
\begin{assumption}\label{Ass:LipPhi} \cite[Assumption 4.2]{Stuart13}
Let $\tilde{X}\subseteq X$ 
{\bf 
denote a subset of admissible uncertain input data,
}
and assume that 
$\Phi_\Gamma \in C(\tilde{X}\times Y;\IR)$
is Lipschitz on bounded sets.

Assume also that there exist
functions $\calM_i: \IR_+\times \IR_+\to \IR_+$, $i=1,2$,
(depending on $\Gamma > 0$)
which are monotone, non-decreasing separately in each argument,
and with $\calM_2$ strictly positive, 
such that for all $u\in \tilde{X}$,
and for all $\delta,\delta_1,\delta_2\in B_Y(0,r)$
\begin{equation}\label{eq:PhiBddbelow}
\Phi_{\Gamma}(u;\delta) \geq -\calM_1(r,\| u \|_X),
\end{equation}
and
\begin{equation}\label{eq:PhiLocLip}
|\Phi_\Gamma (u;\delta_1) - \Phi_\Gamma (u;\delta_2) | 
\leq 
\calM_2(r, \| u \|_X) | \delta_1 - \delta_2 | 
\;.
\end{equation}
\end{assumption}
In \cite[Thm.~4.3]{Stuart13}, a version of
Bayes' rule in function spaces is considered 
and the following result is shown.
\begin{prop}\label{thm:Bayes}
Let Assumption~\ref{Ass:LipPhi} hold.
Assume further that $\bpi(\tilde{X}) = 1$ and that $\bpi(\tilde{X}\cap B) > 0$
for some bounded set $B\subset X$. Assume additionally that, for every
fixed $r>0$,
\[
 \exp( \calM_1(r,\|u\|_X) ) \in L^1_{\bpi}(X;\R)\;.
\]
Then 

\noindent
(i) 
for $\bbQ_0$-a.e.~data $\delta\in Y$,
$$
Z:= \int_X\! \exp\left( -\Phi_\Gamma(u;\delta) \right) \bpi(\dd u) > 0 \;,
$$

\noindent
(ii) 
the conditional distribution of $u|\delta$ ($u$ given $\delta$) exists and
is denoted by $\bpi^\delta$.
It is absolutely continuous with respect to $\bpi$ and there holds
\begin{equation}\label{eq:BayesFormula}
\frac{d\bpi^\delta}{d\bpi}(u) 
=
\frac{1}{Z}  \exp\left( -\Phi_\Gamma(u;\delta) \right) 
\;.
\end{equation}
The Bayesian posterior admits a derivative w.r. to the prior 
with a bounded density w.r.t. the prior $\bpi$, which is given by \eqref{eq:BayesFormula}.
\end{prop}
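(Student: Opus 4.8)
The plan is to derive both assertions as a consequence of Bayes' rule in function spaces, whose hypotheses are precisely Assumption~\ref{Ass:LipPhi} together with the integrability and positivity conditions stated in the proposition. First I would verify that the normalizing constant $Z$ is finite and strictly positive for $\bbQ_0$-a.e.~$\delta$. Finiteness follows from the lower bound \eqref{eq:PhiBddbelow}: since $\Phi_\Gamma(u;\delta)\ge -\calM_1(r,\|u\|_X)$ for $\delta\in B_Y(0,r)$, we have $\exp(-\Phi_\Gamma(u;\delta))\le \exp(\calM_1(r,\|u\|_X))$, and the assumed membership $\exp(\calM_1(r,\|\cdot\|_X))\in L^1_{\bpi}(X;\R)$ gives $Z\le \int_X \exp(\calM_1(r,\|u\|_X))\,\bpi(\dd u)<\infty$. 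For positivity one uses the continuity of $\Phi_\Gamma$ in $u$ (Assumption~\ref{Ass:LipPhi}) to bound $\Phi_\Gamma$ from above, uniformly, on the bounded set $B$ where $\bpi(\tilde{X}\cap B)>0$; this makes the integrand bounded below by a strictly positive constant on a set of positive prior measure, so $Z>0$. This establishes part~(i).

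For part~(ii), I would invoke the abstract version of Bayes' rule in function spaces, namely \cite[Thm.~4.3]{Stuart13}, whose hypotheses we have just checked. The key conceptual point is that the joint distribution of $(u,\delta)$ on $X\times Y$ is constructed from the prior $\bpi$ on $u$ and the conditional distribution $\delta\,|\,u \sim \cN(\cG(u),\Gamma)$ coming from the data model \eqref{eq:delta}. The Radon--Nikodym derivative of the joint law with respect to the product $\bpi\otimes\bbQ_0$ equals, up to the $\delta$-marginal Gaussian density, $\exp(-\Phi_\Gamma(u;\delta))$, by the explicit form of the Gaussian likelihood and the definition \eqref{eq:DefPhiu} of the potential. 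Conditioning on $\delta$ and normalizing by $Z$ then yields \eqref{eq:BayesFormula}. I would be careful to note that the disintegration producing the regular conditional distribution $\bpi^\delta$ is well-defined precisely because $X$ is a separable Banach space (Polish), which guarantees existence of regular conditional probabilities.

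The absolute continuity $\bpi^\delta\ll\bpi$ and the boundedness of the density are then immediate: from part~(i), $Z>0$ so the density $\frac{1}{Z}\exp(-\Phi_\Gamma(u;\delta))$ is well-defined, and the upper bound $\exp(-\Phi_\Gamma(u;\delta))\le \exp(\calM_1(r,\|u\|_X))$ shows it is finite $\bpi$-a.e.; on the admissible set $\tilde{X}$, where $\bpi(\tilde{X})=1$, one may further use local Lipschitz continuity to obtain a genuine uniform bound on bounded subsets.

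The main obstacle I anticipate is not any single estimate but the measure-theoretic construction underlying the disintegration: establishing that the conditional distribution $\bpi^\delta$ genuinely exists as a regular conditional probability and that the formal Bayes computation is rigorous in infinite dimensions, where there is no Lebesgue reference measure. This is exactly the content that \cite[Thm.~4.3]{Stuart13} packages, so in practice the entire argument reduces to a clean verification that Assumption~\ref{Ass:LipPhi} and the stated integrability and positivity hypotheses match the theorem's assumptions, after which the conclusion is quoted. The only genuine analytic work on our side is the finiteness-and-positivity argument for $Z$ sketched above.
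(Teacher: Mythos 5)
Your proposal is correct and takes essentially the same route as the paper: the paper offers no independent proof of this proposition, stating only that it is shown in \cite[Thm.~4.3]{Stuart13}, which is exactly the theorem you invoke after checking its hypotheses. The extra detail you supply --- finiteness of $Z$ from \eqref{eq:PhiBddbelow} and the integrability assumption, positivity of $Z$ from the local Lipschitz bound on the bounded set $B$ with $\bpi(\tilde{X}\cap B)>0$, and the disintegration of the joint law of $(u,\delta)$ --- is precisely the standard verification packaged inside that citation.
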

%
We remark that in the present context,
\eqref{eq:PhiLocLip} follows from \eqref{eq:DefPhiu};
for convenient reference, we include \eqref{eq:PhiLocLip}
in Assumption \ref{Ass:LipPhi}. 
Under Assumption \ref{Ass:LipPhi},
the expectation \eqref{eq:intpsi} depends 
Lipschitz continuously on the data $\delta$ 
(see, e.g., \cite[Sec. 4.1]{Stuart13} for a proof):
\begin{equation} \label{eq:LipData}
\forall \phi\in L^2(\bpi^{\delta_1},X;\IR)\cap L^2(\bpi^{\delta_2},X;\IR):
\quad 
\| \bbE^{\bpi^{\delta_1}}[\phi] -  \bbE^{\bpi^{\delta_2}}[\phi]\|_\cZ
\leq 
C(\Gamma,r) 
  | \delta_1 - \delta_2 | \;.
\end{equation}
For $\delta \in Y$, 
the Bayesian posterior $\bpi^\delta_{h}$
with respect to the approximate potential $\Phi^{\ell}_\Gamma = \Phi^{(s_\ell, h_\ell)}_\Gamma$ 
obtained from dimension truncation at dimension $s_\ell$ and PG discretization \eqref{eq:parmOpEqh_trun}, 
with $O(h_\ell^{-d})$ degrees of freedom, is well-defined.
The multi-level convergence analysis requires bounds on the
approximation errors in the response due to
discretization and approximate numerical solution 
of the discretized problems) on the Bayesian estimate \eqref{eq:intpsi}.
To bound the errors of the Bayesian expectations \eqref{eq:intpsi} 
as in \cite{DGLGCSBIPSL}, we work under
\begin{assumption}\label{Ass:ApprPhi}
\cite[Assumption 4.7]{Stuart13}
Let $\tilde{X}\subseteq X$ denote a subset of admissible uncertain input data,
assume that $\bpi(\tilde{X}) = 1$, 
and let the Bayesian potential $\Phi_\Gamma \in C(\tilde{X};\IR)$
be Lipschitz on bounded sets.
Assume also that there exist functions 
$\calM_i: \IR_+ \to \IR_+$, $i=1,2$,
independent of the number $M = M_h = O(h^{-d})$ of degrees of freedom
in the PG discretization of the forward problem, where the functions $\calM_i$
are monotonically non-decreasing separately in each argument,
and with $\calM_2$ strictly positive,
such that for all $u\in \tilde{X}$ and for all $\delta \in B_Y(0,r)$,
\begin{equation}\label{eq:PhiBddbelow2}
\Phi_{\Gamma}(u;\delta) \geq - \calM_1(\| u \|_X),
\end{equation}
and there is a positive, monotonically decreasing $\varphi(\cdot)$
such that $\varphi(M)\to 0$ as $M\to \infty$,
monotonically and uniformly w.r.t. $u\in \tilde{X}$ (resp. w.r.t. $\bsy \in U$)
and such that
\begin{equation}\label{eq:PhiAppr}
|\Phi_\Gamma(u;\delta) - \Phi^{M}_\Gamma (u;\delta) | 
\leq 
\calM_2(\| u \|_X) \varphi(M)
\;.
\end{equation}
\end{assumption}
\begin{prop}\label{prop:ErrBayesExpec}
Suppose that Assumption \ref{Ass:ApprPhi} holds, and
assume that for $\tilde{X} \subseteq X$ and for some bounded
set $B\subset X$ we have $\bpi(\tilde{X} \cap B)>0$ 
and
$$
X\ni u \mapsto \exp(\calM_1(\| u\|_X)) (1 +\calM_2^2(\| u\|_X) ) \in L^1_{\bpi}(X;\IR) 
\;.
$$
Then there holds,
for every QoI $\phi:X\to \cZ$ such that
$\phi\in  L^2_{\bpi^{\delta}}(X;\cZ)\cap L^2_{\bpi^{\delta}_M}(X;\cZ)$
uniformly w.r.t. $M$ and such that $Z>0$ in \eqref{eq:Z},
the consistency error bound
\begin{equation}\label{eq:BayAppr}
\|\bbE^{\bpi^{\delta}}[\phi] -  \bbE^{\bpi^{\delta}_M}[\phi]\|_\cZ
\leq 
C(\Gamma,r) 
\varphi(M)
\;.
\end{equation}
\end{prop}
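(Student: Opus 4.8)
The plan is to proceed by the standard perturbation argument for Bayesian posteriors (following \cite{Stuart13} and its single-level instance in \cite{DGLGCSBIPSL}), comparing the two normalized densities $\frac{d\bpi^\delta}{d\bpi} = Z^{-1}\exp(-\Phi_\Gamma)$ and $\frac{d\bpi^\delta_M}{d\bpi} = Z_M^{-1}\exp(-\Phi^M_\Gamma)$, where $Z_M := \int_X \exp(-\Phi^M_\Gamma(u;\delta))\,\bpi(\dd u)$ and $\delta$ is suppressed. First I would write the estimator error as the sum of a numerator term and a normalization term,
\[
\bbE^{\bpi^\delta}[\phi] - \bbE^{\bpi^\delta_M}[\phi]
= \frac1Z\int_X \phi\,(e^{-\Phi_\Gamma}-e^{-\Phi^M_\Gamma})\,\bpi(\dd u)
+ \frac{Z_M-Z}{Z}\,\bbE^{\bpi^\delta_M}[\phi].
\]
This reduces everything to two ingredients: the pointwise difference of the exponentials of the two potentials, and the difference $Z_M - Z$ of the normalization constants.

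The elementary inequality $|e^{-a}-e^{-b}| \le \tfrac12|a-b|(e^{-a}+e^{-b})$ is the crux: combined with the approximation bound \eqref{eq:PhiAppr}, $|\Phi_\Gamma - \Phi^M_\Gamma| \le \calM_2(\|u\|_X)\varphi(M)$, it yields
\[
|e^{-\Phi_\Gamma}-e^{-\Phi^M_\Gamma}| \le \tfrac12\,\calM_2(\|u\|_X)\,\varphi(M)\,(e^{-\Phi_\Gamma}+e^{-\Phi^M_\Gamma}),
\]
so the whole error is linear in $\varphi(M)$ once the $u$-integrals are shown finite and uniform in $M$. To control the exponentials I would invoke the lower bound \eqref{eq:PhiBddbelow2}, which holds for both $\Phi_\Gamma$ and the approximate potential $\Phi^M_\Gamma$ (the latter being a weighted least-squares functional of the same form, hence $\ge -\calM_1(\|u\|_X)$), giving $e^{-\Phi_\Gamma}, e^{-\Phi^M_\Gamma}\le e^{\calM_1(\|u\|_X)}$. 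Then $|Z-Z_M| \le \varphi(M)\int_X \calM_2 e^{\calM_1}\,\bpi(\dd u)$, which is finite since $\calM_2 e^{\calM_1}\le \tfrac12(1+\calM_2^2)e^{\calM_1}\in L^1_\bpi(X;\IR)$ by hypothesis; in the same way $Z_M \le \int_X e^{\calM_1}\bpi(\dd u) < \infty$ uniformly in $M$, while $Z>0$ is assumed.

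For the numerator term I would split $e^{-\Phi_\Gamma}+e^{-\Phi^M_\Gamma}$ and treat each half by Cauchy-Schwarz, rewriting $e^{-\Phi_\Gamma}\,\bpi(\dd u) = Z\,\bpi^\delta(\dd u)$ and $e^{-\Phi^M_\Gamma}\,\bpi(\dd u)=Z_M\,\bpi^\delta_M(\dd u)$; this produces factors $\big(\int\|\phi\|_\cZ^2\,\dd\bpi^\delta\big)^{1/2}$ and $\big(\int\|\phi\|_\cZ^2\,\dd\bpi^\delta_M\big)^{1/2}$, finite and uniform in $M$ by the hypothesis $\phi\in L^2_{\bpi^\delta}\cap L^2_{\bpi^\delta_M}$ (uniformly in $M$), times $\big(\int_X\calM_2^2\,e^{\calM_1}\bpi(\dd u)\big)^{1/2}<\infty$. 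The normalization term is bounded by $\tfrac{|Z_M-Z|}{Z}\|\bbE^{\bpi^\delta_M}[\phi]\|_\cZ$ with $\|\bbE^{\bpi^\delta_M}[\phi]\|_\cZ \le \|\phi\|_{L^2_{\bpi^\delta_M}}$, again uniform in $M$. Collecting the constants---all expressible through $\Gamma$, $r$, $Z$, the $L^1_\bpi$-norm of $e^{\calM_1}(1+\calM_2^2)$ and the uniform $L^2$-norms of $\phi$---gives the claimed bound $C(\Gamma,r)\,\varphi(M)$.

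The main obstacle I anticipate is the uniform-in-$M$ control of the quantities carrying the approximate potential $\Phi^M_\Gamma$: bounding $e^{-\Phi^M_\Gamma}$ requires a lower bound on $\Phi^M_\Gamma$ (not stated separately, so one must justify that the discretized potential inherits \eqref{eq:PhiBddbelow2}, e.g.\ from nonnegativity of the weighted misfit in \eqref{eq:DefPhiu}), and both the $L^2_{\bpi^\delta_M}$-norm of $\phi$ and the normalization $Z_M$ must be shown bounded independently of $M$. The integrability hypothesis $e^{\calM_1}(1+\calM_2^2)\in L^1_\bpi(X;\IR)$ is exactly calibrated so that every Cauchy-Schwarz application closes; the only genuine care is to keep all constants independent of $M$.
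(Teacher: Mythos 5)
Your argument is correct, but it takes a different route from the one the paper relies on: the paper does not prove Proposition \ref{prop:ErrBayesExpec} itself, but defers to \cite[Thm.~4.9, Rem.~4.10]{Stuart13}, where the bound is obtained by first proving the Hellinger-distance estimate $d_{\mathrm{Hell}}(\bpi^\delta,\bpi^\delta_M)\le C\varphi(M)$ (working with the square roots of the densities $Z^{-1/2}e^{-\Phi_\Gamma/2}$ and $Z_M^{-1/2}e^{-\Phi^M_\Gamma/2}$) and then invoking the generic fact that expectations of QoIs lying in $L^2_{\bpi^\delta}\cap L^2_{\bpi^\delta_M}$ are Lipschitz with respect to the Hellinger metric. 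You instead estimate the difference of expectations directly, splitting it into a numerator term and a normalization term; the core ingredients (the exponential-difference inequality, the lower bound on the potentials, the bound $|Z-Z_M|\lesssim\varphi(M)$, Cauchy--Schwarz against the uniform $L^2$-norms of $\phi$, and the integrability hypothesis $e^{\calM_1}(1+\calM_2^2)\in L^1_\bpi$, which is calibrated exactly so that these steps close) are the same in both arguments. The Hellinger route buys a QoI-independent metric bound from which the estimate for every admissible $\phi$ follows at once, and which is the natural object for further perturbation results; your direct route buys a shorter, self-contained argument with no auxiliary metric. Two details you handled correctly and that deserve confirmation: (i) the elementary inequality $|e^{-a}-e^{-b}|\le\tfrac12|a-b|\,(e^{-a}+e^{-b})$ is indeed valid (it is the logarithmic-mean versus arithmetic-mean inequality for the pair $e^{-a},e^{-b}$); (ii) you rightly flagged that \eqref{eq:PhiBddbelow2} is stated in Assumption \ref{Ass:ApprPhi} only for $\Phi_\Gamma$, whereas the argument needs a lower bound on $\Phi^M_\Gamma$ uniformly in $M$ (in \cite{Stuart13} this is assumed for both potentials), and your fix --- that $\Phi^M_\Gamma$ in \eqref{eq:PhiN} is a $\Gamma^{-1}$-weighted least-squares functional, hence nonnegative, so that $e^{-\Phi^M_\Gamma}\le 1\le e^{\calM_1(\|u\|_X)}$ --- is valid in the present setting and closes this gap.
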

%
A proof of Proposition \ref{prop:ErrBayesExpec}
can be found in \cite[Thm.~4.9, Rem.~4.10]{Stuart13}.
%
%
The consistency condition \eqref{eq:PhiAppr} in either of these cases
follows from \cite[Prop.~3.2]{DGLGCSBIPSL}.
Assume we are given a sequence $\{ q^M \}_{M \geq 1}$
of approximations to the forward response
$X\ni u\mapsto q(u) \in \bcX$
such that, with the parametrization \eqref{eq:uviapsi},
\begin{equation}\label{eq:qConsis}
\sup_{u\in \tilde{X}} \| (q-q^M)(\bsy) \|_{\bcX} \leq \varphi(M)
\end{equation}
with a consistency error bound $\varphi(M) \downarrow 0$ as
in Assumption \ref{Ass:ApprPhi}.
Denote by $G^M$ the corresponding
(Petrov-Galerkin) approximations of the parametric forward maps.
Then it was shown in \cite[Prop.~3.2]{DGLGCSBIPSL} that 
the approximate Bayesian potential
\begin{equation}\label{eq:PhiN}
\Phi^M_{\Gamma}(u;\delta) 
= \frac{1}{2} (\delta - \cG^M(u))^\top \Gamma^{-1} (\delta - \cG^M(u))
: X\times Y \to \bbR \, ,
\end{equation}
where $\cG^M := \cO\circ G^M$, satisfies \eqref{eq:PhiAppr}.
In the ensuing error analysis of the combined HoQMC, PG 
approximation, $q^M$ in \eqref{eq:PhiN} will denote the
Petrov-Galerkin discretization \eqref{eq:parmOpEqh_trun}
in Section \ref{sec:Discr}, 
based on $s$-dimensional truncation of the parameter space, 
and on finite dimensional subspaces with 
$M = M_h = {\rm dim}(\bcX^h) = {\rm dim}(\bcY^h)$ many degrees of freedom.
Typically, for subspaces of piecewise polynomial functions
obtained by isotropic mesh refinement in a bounded, 
physical domain $D\subset \IR^d$, $M_h = O(h^{-d})$; 
in sparse grid discretizations in $D$ (not considered in detail here,
but covered by the present theory), $M_h = O(h^{-1}|\log h|^{d-1})$.
In that case, $\bcX_t$ and $\bcY_t$ are 
function spaces of mixed partial 
derivatives.

In applying QMC to Bayesian estimation, we shall be concerned with
integration of parametric integrand functions with respect to 
a probability measure $\bpi$ on the set $U$ of all parameters characterizing
the uncertain input data. Typically, $\bpi$ will denote a Bayesian
prior on $U$. Since $U$ is a cartesian product of intervals,
we \emph{assume} that $\bpi$ is a product probability measure, i.e.~
$\bpi(\mathrm{d}\bsy) = \prod_{j\geq 1} \pi_j(\mathrm{d}y_j)$.
\subsection{Parametric Bayesian posterior}
\label{eq:ParaPost}

As in \cite{SS12,SS13,DGLGCSBIPSL} and in Section \ref{sec:Param},
we adopt a product probability space
\begin{equation}\label{eq:UcBmu0}
(U,\cB,\bpi) \;.
\end{equation}
{\em 
We assume that the prior $\bpi$ on the uncertain input $u\in X$, 
parametrized in the form \eqref{eq:uviapsi}, 
satisfies $\bpi(\tilde{X}) = 1$.
}
We also assume the prior measure $\bpi$ being uniform, and the sequences $\bsy$
in \eqref{eq:uviapsi} taking values in the parameter domain $ U=[-1,1]^\bbN$.
With $U$ as in (\ref{eq:UcBmu0}),
the uncertainty-to-observation map $\Xi:U \to Y = \bbR^K$ reads
\begin{equation}  \label{eq:DefMapXi}
\Xi(\bsy)
=
\cG(u)\Bigl|_{u= \langle u \rangle+\sum_{j \in \mathbb{N} } y_j\psi_j}
\;.
\end{equation}
In order to apply our QMC quadrature, 
we need a parametric version of Bayes' Theorem, as stated in Prop.~\ref{thm:Bayes},
in terms of the uncertainty parametrization \eqref{eq:uviapsi}.
We view $U$ as the unit ball in $\ell^\infty([-1,1]^{\bbJ}$,
the Banach space of bounded sequences taking values in $U$.
\begin{prop} \label{t:dens}
Assume that $\Xi: U \to Y = \bbR^K$ is bounded and continuous. 
Then $\bpi^\delta(\dd\bsy)$, the distribution of 
$\bsy\in U$ given data $\delta\in Y$, 
is absolutely continuous with respect to $\bpi(\dd\bsy)$, i.e.
there exists a parametric density $\Theta(\bsy)$ such that
for every $\bsy\in U$
\be \label{eq:post}
\frac{d\bpi^\delta}{d\bpi}(\bsy) = \frac{1}{Z} \Theta(\bsy)
\ee
with the posterior density $\Theta(\bsy)$ given by
\begin{equation} \label{eq:PostDens}
\Theta(\bsy) 
= 
\exp\bigl(-\Phi_\Gamma(u;\delta)\bigr)\Bigl|_{u=\langle u \rangle + \sum_{j \in \bbJ} y_j\psi_j} 
\;.
\end{equation}
Here the Bayesian potential $\Phi_\Gamma$ is as in \eqref{eq:DefPhiu} and 
\be \label{eq:Z}
Z 
= \IE^{\bpi}\!\left[ 1 \right] 
= \int_{U}\! \Theta(\bsy)\,
\bpi(\dd \bsy) > 0 \;.
\ee
\end{prop}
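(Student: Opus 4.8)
The plan is to deduce Proposition~\ref{t:dens} directly from the abstract Bayes' rule of Proposition~\ref{thm:Bayes}, by verifying under the present hypotheses that the conditions of that result hold in the parametric setting induced by the uncertainty parametrization \eqref{eq:uviapsi}. The prior $\bpi$ on $U$ pushes forward, via the map $\bsy\mapsto u(\bsy)=\langle u\rangle+\sum_{j\geq1}y_j\psi_j$, to a prior on $X$ concentrated on $\tilde{X}$, so the two formulations are linked by a change of variables; my task then reduces to checking that the boundedness and continuity of $\Xi$ supply all the structural and integrability conditions of Assumption~\ref{Ass:LipPhi}.

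First I would observe that, by \eqref{eq:DefPhiu} and \eqref{eq:DefMapXi}, the potential equals $\Phi_\Gamma(u(\bsy);\delta)=\tfrac12|\delta-\Xi(\bsy)|_\Gamma^2$, a positive-definite quadratic form in $\delta-\Xi(\bsy)$; since $\Gamma^{-1}$ is positive definite this gives $\Phi_\Gamma\geq0$, so the lower bound \eqref{eq:PhiBddbelow} holds with $\calM_1\equiv0$ and the integrability requirement $\exp(\calM_1(r,\|u\|_X))\in L^1_\bpi(X;\IR)$ of Proposition~\ref{thm:Bayes} is trivially satisfied. Next, boundedness of $\Xi$ yields $\sup_{\bsy\in U}|\Xi(\bsy)|\leq C_\Xi<\infty$; combined with $\delta$ ranging over $B_Y(0,r)$, this makes $\Phi_\Gamma$ bounded on $U\times B_Y(0,r)$ and shows that the local Lipschitz bound \eqref{eq:PhiLocLip} holds with a constant depending only on $r$ and $C_\Xi$. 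Continuity of $\Xi$ then gives continuity, hence $\cB$-measurability, of $\Theta(\bsy)=\exp(-\Phi_\Gamma(u(\bsy);\delta))$ in \eqref{eq:PostDens}.

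With these verifications the normalization constant is controlled from both sides: from $\Phi_\Gamma\geq0$ we get $0<\Theta\leq1$, while from $\Phi_\Gamma\leq\tfrac12(r+C_\Xi)^2\|\Gamma^{-1}\|$ on $U\times B_Y(0,r)$ we get $\Theta\geq\exp(-\tfrac12(r+C_\Xi)^2\|\Gamma^{-1}\|)=:c>0$ uniformly in $\bsy$. Since $\bpi$ is a probability measure, $Z=\int_U\Theta\,\bpi(\dd\bsy)\in[c,1]$, which establishes $Z>0$ as in \eqref{eq:Z}. Proposition~\ref{thm:Bayes} then delivers the absolute continuity of the posterior together with the density formula \eqref{eq:BayesFormula}; transferring this back along the parametrization map yields \eqref{eq:post}--\eqref{eq:PostDens}.

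The delicate point is not analytic but measure-theoretic: one must check that $\bsy\mapsto u(\bsy)$ is measurable from $(U,\cB)$ into $X$ and that the abstract conditional distribution on $X$ pulls back consistently to a conditional distribution on $U$, so that the requirements $\bpi(\tilde{X})=1$ (assumed in Section~\ref{sec:Param}) and $\bpi(\tilde{X}\cap B)>0$ for some bounded $B\subset X$ are meaningful in the parametric picture. Given the strong standing hypotheses that $\Xi$ is bounded and continuous, these checks are routine, so the substance of the argument lies in the verification that boundedness and continuity suffice to invoke Proposition~\ref{thm:Bayes}, rather than in any single hard estimate.
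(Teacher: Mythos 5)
Your proposal is correct and follows essentially the same route as the paper, which presents Proposition~\ref{t:dens} as the parametric transcription of the abstract Bayes' rule in Proposition~\ref{thm:Bayes} (i.e.~\cite[Thm.~4.3]{Stuart13}), with the hypotheses verified exactly as you do: nonnegativity of the quadratic potential \eqref{eq:DefPhiu} gives \eqref{eq:PhiBddbelow} with $\calM_1\equiv 0$, boundedness and continuity of $\Xi$ give the Lipschitz and measurability requirements, and the standing assumption $\bpi(\tilde{X})=1$ handles the measure-theoretic transfer. Your uniform two-sided bound $0<c\leq\Theta\leq 1$, yielding $Z>0$ for \emph{every} $\delta\in B_Y(0,r)$ rather than only $\bbQ_0$-a.e.~$\delta$, is a small strengthening over citing Proposition~\ref{thm:Bayes}(i) directly.
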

Bayesian estimation 
aims at a ``most likely'' value of the Quantity of Interest (QoI) 
$\phi:X \to \cZ$ (in a Banach space $\cZ$),
conditional on given (noisy) observation data $\delta\in Y$.
For example, $\phi(u) = G(u) = q(u)$ (with $\cZ=\bcX$)
will result in a
``most likely'' (as expectation under the posterior,
 given data $\delta$) system response.
Based on Prop.~\ref{t:dens} we associate
with the QoI $\phi$ the deterministic, infinite-dimensional, parametric map
\begin{equation}
\begin{array}{rcl}
\Psi(\bsy)
&=&
\Theta(\bsy)\phi(u) \mid_{u= \langle u \rangle + \sum_{j \in \bbJ} y_j\psi_j}
=
\exp\bigl(-\Phi_\Gamma(u;\delta)\bigr)\phi(u)
\Bigl|_{ u = \langle u \rangle + \sum_{j \in \bbJ} y_j\psi_j }
: U\rightarrow \cZ 
\;.
\label{eq:psi}
\end{array}
\end{equation}
With the density $\Psi(\bsy)$ in \eqref{eq:psi},
the Bayesian estimate of the QoI $\phi$, 
given noisy data $\delta$, takes the form
\begin{equation}\label{eq:intpsi}
\IE^{\bpi^\delta}[\phi]
= 
Z'/Z, \;\;
\quad 
Z':=
\IE^{\bpi}[\Psi] 
=
\int_{U} \! \Psi(\bsy) \,\bpi(\dd \bsy)
\;.
\end{equation}
%
%
\subsection{Higher order QMC integration}
\label{sec:HoQMC}
In general, the integrals $Z$ and $Z'$ in \eqref{eq:intpsi} 
cannot be evaluated exactly. 
Hence we propose to approximate these integrals by a HoQMC quadrature rule.
In the following, we assume the prior density $\bpi$ to be the uniform density.
Supposing $\{\bsy_0, \ldots,\bsy_{N-1} \} \subset U=[0,1]^s$ 
is a collection of $N$ QMC points, we define the QMC estimate
of the integral of a function $f:U\to\cZ$ with respect to the uniform
measure $\bpi$ by the following equal weight quadrature rule in $s$ dimensions,
\begin{equation}\label{eq:QMCInt}
    Q_{N,s} [f]
    :=
    \frac{1}{N} 
    \sum_{j=0}^{N-1} f(\bsy_j)
    \;.
\end{equation}
We shall analyze, in particular, $Q_{N,s}$ being 
\emph{deterministic, interlaced higher order polynomial lattice rules}
as introduced in \cite{D08} and as considered for affine-parametric 
operator equations in \cite{DKGS14}.

To generate a polynomial lattice rule with $N=b^m$ points (where $b$ is a given
prime number, $m$ is a given positive integer), we need a \emph{generating vector} 
of polynomials 
$
  \bg(x) = (g_1(x), \ldots, g_s(x))
$
where each $g_j(x)$ is a polynomial with degree $<m$ with its coefficients 
taken from a finite field ${\mathbb Z}_b$.

For each integer $n= 0,\ldots,b^m-1$, we associate $n$ with the polynomial
\[
 n(x) = \sum_{r=0}^{m-1} \eta_r x^r  \quad \in {\mathbb Z}_b[x],
\]
where $(\eta_{m-1}, \ldots ,\eta_0)$ is the $b$-adic expansion of $n$,
that is $n = \eta_0 + \eta_1 b + \ldots + \eta_{m-1} b^{m-1}$.
We also need a map $v_m$ which maps elements in ${\mathbb Z}_b(x^{-1})$ to
the interval $[0,1)$, defined for any integer $w$ by
\[
  v_m \left( \sum_{\ell=w}^\infty t_{\ell} x^{-\ell} \right) =
  \sum_{\ell=\max(1,w)}^m t_{\ell} b^{-\ell}.  
\]

Let $P \in {\mathbb Z}[x]$ be an irreducible polynomial with degree $m$.
The classical polynomial lattice rule $\calS_{P,b,m,s}(\bg)$ associated with 
$P$ and the generating vector $\bg$ is comprised of the quadrature points
\[
 \bsy_n = \left( v_m \left( \frac{n(x)g_1(x)}{P(x)} \right), 
       \ldots, 
      v_m\left( \frac{n(x) g_s(x)}{P(x)} \right)  \right) \in [0,1)^s,
\quad n = 0,\ldots, b^{m} - 1.
\]

Classical polynomial lattice rules give almost first order of convergence for integrands of bounded variation.
To obtain higher order of convergence, an interlacing procedure described as follows
is needed. The \emph{digit interlacing function} with digit interlacing factor 
$\alpha \in \mathbb{N}$ is given by
\begin{equation}\label{eq:DigIntl}
\begin{array}{rcl}
\mathscr{D}_\alpha: [0,1)^{\alpha} & \to & [0,1) \;: \;
(x_1,\ldots, x_{\alpha}) \mapsto \sum_{a=1}^\infty \sum_{j=1}^\alpha
\xi_{j,a} b^{-j - (a-1) \alpha}\;,
\end{array}
\end{equation}
where $x_j = \xi_{j,1} b^{-1} + \xi_{j,2} b^{-2} + \cdots$
for $1 \le j \le \alpha$.
For vectors, we set
\begin{equation} \label{eq:DigIntlAr}
\begin{array}{rcl}
\mathscr{D}_\alpha: [0,1)^{\alpha s} & \to & [0,1)^s
\\
(x_1,\ldots, x_{\alpha s})
&\mapsto &
(\mathscr{D}_\alpha(x_1,\ldots, x_\alpha),  \ldots,
\mathscr{D}_\alpha(x_{(s-1)\alpha +1},\ldots, x_{s \alpha}))
\;.
\end{array}
\end{equation}

Then, an \emph{interlaced polynomial lattice rule of
order $\alpha$ with $b^m$ points in $s$ dimensions} 
is a QMC rule using $\mathscr{D}_\alpha(\calS_{P,b,m,\alpha s}(\bg))$ 
as quadrature points, for some given modulus $P$ and generating vector $\bg$.
\subsection{Holomorphic parameter dependence}
\label{sec:Hol}
In the QMC quadrature error analysis, for each parameter 
$\bsy_j$, for $j=1,\ldots,N$, we assume the parametric 
family of solutions admits a holomorphic extension into the complex
domain $\C$ and these extensions must satisfy some uniform bounds.
We recall from \cite{HaSc11,CCS2} the notion of 
{\em $(\bsb, p, \eps)$-holomorphy of parametric solutions}
introduced to this end. %
\emph{%
In the remainder of Section~\ref{sec:Hol} 
all spaces $X$, $Y$, $\bcX$ and $\bcY$ will be 
assumed to be Banach spaces over $\C$. 
}
\begin{definition} \label{def:peanalytic} ($(\bsb,p,\eps)$-holomorphy)
Let $\eps > 0$ and $0<p<1$ be given. 
For a positive sequence 
$\bsb = (b_j)_{j\geq 1} \in \ell^p(\IN)$, 
a parametric mapping $g: U\to \cX$ satisfies the {\em $(\bsb,p,\eps)$-holomorphy assumption} 
if and only if all of the following conditions are satisfied:
\begin{enumerate}
\item 
The map $\bsy\mapsto g(\bsy)$ from $U$ to $\cX$, for each $\bsy\in U$,
is uniformly bounded with respect to the parameter sequence $\bsy$, 
i.e.~there is a bound $C_0 > 0$ such that
\be
\label{ubNewProblem}
\sup_{\bsy\in U}\|g(\bsy)\|_\bcX \leq C_0\;.
\ee
\item 
%
%
For any \emph{$(\bsb,\eps)$-admissible} sequence $\bsrho:=(\rho_j)_{j\geq1}$,
for sufficiently small $\eps > 0$, 
the parametric solution 
$\bsy\mapsto g(\bsy)$ 
admits a continuous extension
$\bsz \mapsto g(\bsz)$ to the complex domain
with respect to each variable $z_j$
that is holomorphic in a cylindrical set of the form 
${O}_{\bsrho} := \bigotimes_{j\geq 1} {O}_{\rho_j}$ 
where, for every $j\geq 1$, 
$[-1,1] \subset O_{\rho_j}\subset \C$ 
is an open set  $O_{\rho_j}\subset\C$.
Recalling that, the sequence $\bsrho:=(\rho_j)_{j\geq1}$ is \emph{$(\bsb,\eps)$-admissible}
if
\be\label{eq:rho_b}
\sum_{j \geq 1} (\rho_j-1) b_j \leq \eps,
\ee

\item
For any poly-radius $\bsrho$ satisfying \eqref{eq:rho_b}, 
there are open sets 
$$
[-1,1] \subset {O}_{\rho_j} \subset \tilde{O}_{\rho_j} \subset \C
$$
such that the parametric map admits a continuous 
extension to $\overline{\tilde{O}_\bsrho}$ which is bounded:
there holds
\be
\sup_{\bsz\in\tilde{O}_\rho}\|g(\bsz)\|_\bcX \leq C_\eps \, ,
\ee
where the bounds $C_\eps > 0$ depend on $\eps$, 
but are independent of $\bsrho$.
\end{enumerate}
\end{definition}
Evidently, 
$(\bsb, p,\eps)$-holomorphy depends on the choice of $\tilde{O}_{\rho_j}$.
For derivative estimates in the HoQMC error bounds in 
\cite{DKGNS13,KSS12}, we use as in \cite{DLGCS14}
a specific family of continuation domains:
for $\kappa>1$, define
%
\begin{equation}\label{eq:TubeDef}
\cT_\kappa 
= \{ z\in \bbC| {\rm dist}(z,[-1,1]) \leq \kappa - 1\}
= \bigcup_{-1\leq y \leq 1} \{ z\in \IC | |z-y|\leq \kappa -1 \}
\;.
\end{equation}
Then, once more, for a poly-radius $\bsrho$ satisfying
\eqref{eq:rho_b}, we denote by $\cT_\bsrho$ the 
corresponding cylindrical set 
$\cT_\bsrho := \bigotimes_{j\geq 1} \cT_{\rho_j}\subset \C^\N$.
We refer to \cite{HaSc11,CCS2} and the references there 
for further examples of parametric forward problems  whose solutions admit 
$(\bsb,p,\eps)$-holomorphic extensions.
\subsection{HoQMC convergence for $(\bsb,p,\eps)$-holomorphic integrands}
\label{sec:anadepsol}
To obtain QMC error bounds, 
we quantify the dependence of the parametric
solution $\bsy \mapsto q(\bsy)\in \bcX$ of \eqref{eq:NonOpEqn} 
on the parameter sequence $\bsy$.
As shown in  \cite{KSS12},
bounds on the growth of the partial derivatives 
of $q(\bsy)$ with respect to $\bsy$ imply
convergence rates for higher order QMC quadratures.

We are interested in 
integrand functions $g$ being compositions of
$\Oc(\cdot) \in \bcX'$
with PG approximation $q_h^s(2 \bsy-\bsone)$ 
of the dimension-truncated, parametric and
$(\bsb,p,\eps)$-holomorphic, operator equation \eqref{eq:NonOpEqn}.  
For every $s\in N$, $g(\bsy) := (\Oc \circ q^s)(\bsy_{\{1:s\}})$ 
is $(\bsb,p,\eps)$-holomorphic {\em uniformly w.r.t. $s\in \IN$}
(see Section \ref{sec:dimtrunc}).
Based on 
\cite[Sec.~3]{DKGNS13} and \cite[Prop.~4.1]{DLGCS14},
the following result provides rates of convergence of QMC quadratures.
\begin{prop}\label{prop:main1}
Assume given a parameter space dimension 
$s\ge 1$ and $N = b^m$ integration points
for integer $m\ge 1$ and for a prime number $b$. 
Let $\bsbeta = (\beta_j)_{j\ge 1}$ be a sequence of positive numbers, 
and denote by $\bsbeta_s = (\beta_j)_{1\le j \le s}$ its truncation after $s$ terms. 
Assume that $\bsbeta \in \ell^p(\IN)$ for some $0<p<1$, i.e.~that
\begin{equation} \label{p-sum}
  \exists\, 0<p < 1 : \quad \sum_{j=1}^\infty \beta_j^p < \infty\;.
\end{equation}
Define, for $0<p\leq 1$ as in \eqref{p-sum}, the digit interlacing parameter
\begin{equation} \label{alpha}
  \alpha \,:=\, \lfloor 1/p \rfloor +1 \;.
\end{equation}
Consider parametric integrand functions $g: U \to \cZ$ 
in a separable Hilbert space $\cZ$
which are $(\bsbeta,p,\eps)$-holomorphic 
(cp. Definition \ref{def:peanalytic}).

Then, for every $N\in \N$, one can construct 
an interlaced polynomial lattice rule of
order $\alpha$ with $N$ points using a fast
component-by-component algorithm, 
in at most $\calO(\alpha^2 s N \log N)$ operations, 
plus $\calO(\alpha^2 s^2 N)$ update cost, plus 
$\calO(\alpha s N)$ memory cost, 
and with the error bound
\begin{equation}\label{qmc_upper_bound}
\forall s,N \in \N:\quad 
  \|I_s(g) - Q_{N,s}(g)\|_{\cZ}
  \,\le\, C_{\alpha,\bsbeta,b,p}\, \| g \|_{\bsrho,\cZ} N^{-1/p} \;.
\end{equation}
Here $C_{\alpha,\bsbeta,b,p} < \infty$ is independent of $s$
and $N$, and  the expression 
$\| g\|_{\bsrho,\cZ}$ denotes the 
(likewise independent of $s$ and of $N$)
maximum of the modulus for the 
integrand function over $\cT_\bsrho$ for every 
$(\bsbeta,\eps)$-admissible poly-radius
(i.e., satisfying \eqref{eq:rho_b}) $\bsrho = (\rho_j)_{j\geq 1}$:
\be\label{eq:TrhoNZ}
\| g \|_{\bsrho,\cZ} := \sup_{\bsz \in \cT_{\bsrho}} \| g(\bsz) \|_\cZ 
\;.
\ee
\end{prop}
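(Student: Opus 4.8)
The plan is to reduce the statement to a known QMC error bound for interlaced polynomial lattice rules in the finite-dimensional setting, and then to verify that the $(\bsbeta,p,\eps)$-holomorphy hypothesis supplies exactly the derivative bounds (of ``SPOD'' type) that this known bound requires. Concretely, I would proceed as follows. First, I would exploit the holomorphy of $g$ to obtain, via Cauchy's integral formula on the cylindrical continuation domains $\cT_\bsrho$, pointwise bounds on the mixed partial derivatives $\partial^\bsnu_\bsy g(\bsy)$. The key is that for a multi-index $\bsnu$ supported on $\{1,\dots,s\}$, choosing for each active coordinate $j$ a polydisc of radius $\rho_j - 1 \sim (b_j)^{-1}$ subject to the admissibility constraint \eqref{eq:rho_b}, one obtains
\begin{equation*}
\sup_{\bsy\in U}\|\partial^\bsnu_\bsy g(\bsy)\|_\cZ
\;\le\;
C \,\|g\|_{\bsrho,\cZ}\, \bsnu!\, \prod_{j\ge 1}\beta_j^{\nu_j}
\;,
\end{equation*}
after optimizing the radii; this is precisely the product-and-order-dependent (SPOD) derivative structure with weight sequence $\bsbeta$. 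This step follows \cite[Sec.~3]{DKGNS13} and \cite[Prop.~4.1]{DLGCS14}, and the passage from holomorphy to these weighted derivative bounds is the technical heart of the reduction.

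Second, having established SPOD-type derivative bounds, I would invoke the component-by-component (CBC) construction and its worst-case error analysis for interlaced polynomial lattice rules of order $\alpha = \lfloor 1/p\rfloor + 1$, as developed in \cite{DKGS14} for affine-parametric operator equations and extended in \cite{DLGCS14}. The CBC construction, applied in the interlaced setting on $\alpha s$ underlying dimensions and using SPOD weights derived from $\bsbeta$, produces a generating vector $\bg$ whose associated rule satisfies a worst-case error bound in the relevant weighted space. The cost claims --- $\calO(\alpha^2 s N\log N)$ for the fast CBC algorithm, together with the $\calO(\alpha^2 s^2 N)$ update and $\calO(\alpha s N)$ memory costs --- are inherited verbatim from the fast CBC implementation for SPOD weights, so I would simply cite these and not rederive them.

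Third, I would combine the derivative bound with the worst-case error estimate to obtain the convergence rate. The crucial point is that the choice $\alpha = \lfloor 1/p\rfloor + 1 > 1/p$ guarantees, via the summability $\bsbeta\in\ell^p(\IN)$ in \eqref{p-sum}, that the relevant sum over the SPOD weights converges \emph{independently of $s$}; this is what makes the constant $C_{\alpha,\bsbeta,b,p}$ dimension-independent and yields the rate $N^{-1/p}$ rather than merely a rate $N^{-\alpha+\text{(something)}}$ that degrades with dimension. The norm $\|g\|_{\bsrho,\cZ}$ from \eqref{eq:TrhoNZ} appears as the multiplicative constant precisely because it controls the Cauchy estimates uniformly over all admissible polyradii, and it is independent of $s$ and $N$ by hypothesis.

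I expect the main obstacle to be the first step: converting $(\bsbeta,p,\eps)$-holomorphy into the weighted, factorially-growing derivative bounds with the correct SPOD structure, and in particular carrying out the optimization of the continuation radii $\rho_j$ so that the resulting weights are summable with the sharp exponent tied to $\alpha$. This requires care because the admissible radii are coupled through the single constraint \eqref{eq:rho_b}, so the optimal allocation of radius budget across the active coordinates must be balanced against the factor $\prod_j (\rho_j-1)^{-\nu_j}$ arising from the Cauchy integral; the bookkeeping that produces clean product-form weights, and the verification that these feed correctly into the CBC worst-case bound to give a truly $s$-independent constant, is the delicate part. Once the derivative bounds are in hand, the remaining steps are essentially an appeal to the established machinery of \cite{DKGS14,DLGCS14}.
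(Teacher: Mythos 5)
Your proposal takes a genuinely different route from the paper, and it has a gap precisely at the point where the proposition carries new content. The paper's proof rebuilds none of the Cauchy-estimate/SPOD/CBC machinery: it exploits the Hilbert structure of $\cZ$ by testing against an arbitrary $\chi\in\cZ$ with $\|\chi\|_\cZ\le 1$, observes that $\bsz\mapsto(g(\bsz),\chi)_\cZ$ is a \emph{scalar} $(\bsbeta,p,\eps)$-holomorphic integrand, applies the already-established complex-variable bound of \cite[Prop.~4.1]{DLGCS14} to it, and uses linearity of the error functional to pull $\chi$ back out:
\begin{equation*}
\left\| (I_s-Q_{N,s})[g] \right\|_\cZ
=\sup_{\|\chi\|_\cZ\le 1}\left| (I_s-Q_{N,s})\bigl[(g(\cdot),\chi)_\cZ\bigr] \right|
\,\le\, C_{\alpha,\bsbeta,b,p}\, N^{-1/p}\sup_{\bsz\in\cT_\bsrho}\|g(\bsz)\|_\cZ\;.
\end{equation*}
In other words, the radius optimization, the SPOD derivative bounds, and the CBC worst-case analysis are all imported wholesale from the scalar result; the only new ingredient is this duality reduction (plus separability of $\cZ$, needed so that the Bochner integral $I_s[g]$ and the quadratures are well defined).

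The gap in your plan is in the second step: the CBC worst-case error theory you invoke from \cite{DKGNS13,DKGS14,DLGCS14} is formulated for scalar-valued integrands, so feeding it $\cZ$-norm derivative bounds $\|\partial^\bsnu_\bsy g(\bsy)\|_\cZ$ is not an appeal to established machinery --- it \emph{is} the vector-valued extension that Proposition \ref{prop:main1} asserts, and it must be justified. You can close it either by redoing the Walsh-coefficient and worst-case analysis with Bochner integrals (feasible, since those arguments are triangle-inequality based, but tedious and not contained in the cited sources), or by the duality argument above --- and once you use duality, your entire first step becomes redundant, since the holomorphy-to-SPOD reduction is already performed inside \cite[Prop.~4.1]{DLGCS14} for the scalar integrands $(g(\cdot),\chi)_\cZ$. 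So you have identified the wrong ``main obstacle'': the coupled-radius optimization is in the literature; the vector-valuedness is what is not. A further technical inaccuracy: under the coupled admissibility constraint \eqref{eq:rho_b} the radii must be allocated in a $\bsnu$-dependent way, which yields derivative bounds of the form $|\bsnu|!\,\prod_j\tilde{\beta}_j^{\nu_j}$ (order-dependent factorial, which is what SPOD weights are designed for), not $\bsnu!=\prod_j\nu_j!$ with a $\bsnu$-independent constant as you wrote; the latter, stronger bound is in general not attainable from \eqref{eq:rho_b} (e.g.~for $\bsnu=(1,\dots,1)$ in $k$ coordinates the optimal Cauchy bound already grows like $k^k$). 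This slip is harmless to your plan, since the CBC theory only needs the $|\bsnu|!$ form, but it should be corrected.
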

\begin{proof}
The Hilbert space $\cZ$ being isomorphic to $\cZ'$,
for every $g \in \cZ$ the norm $\| g \|_{\cZ}$ can be written as
$$
\| g \|_{\cZ}
= 
\sup_{\chi \in \cZ: \| \chi \|_\cZ \leq 1} |(\chi, g)_\cZ| 
\;,
$$
where $(\cdot,\cdot)_\cZ$ denotes the $\cZ$ inner product.
By separability of $\cZ$, the Bochner-integral $I_s[g]$ and 
the QMC rules $Q_{N,s}[g]$ are well-defined also for 
measurable integrand functions taking values in $\cZ$.
For integrand functions 
$g$ being a $(\bsb,p,\eps)$-holomorphic map taking values
in $\cZ$, for every $\chi\in \cZ$ the mapping $\bsz \mapsto (\chi,g(\bsz))_\cZ$
is a $(\bsb,p,\eps)$-holomorphic map with values in $\IC$.
Therefore, the complex variable HoQMC error bounds obtained in
\cite[Prop.~4.1]{DLGCS14} apply for this integrand: 
there exists a constant 
$C_{\alpha,\bsbeta,\bsb,p}>0$ which is independent of the 
parametric integrand
$\bsy \mapsto (\chi,g(\bsy))_\cZ$ 
such that for every $N$ holds the 
HoQMC error bound
\be\label{eq:QMCei}
\left| (I_s - Q_{N,s})\left[ (\chi ,g(\cdot))_\cZ  \right] \right|
\leq
C_{\alpha,\bsbeta,b,p}\, \|  (\chi,g(\cdot))_\cZ  \|_{\bsrho,\IC} N^{-1/p} 
\;.
\ee
Here, for every fixed $\chi$ and any  $(\bsbeta,\eps)$-admissible poly-radius $\bsrho$,
$$
\|  (\chi,g(\cdot))_\cZ  \|_{\bsrho,\IC}
=
\sup_{\bsz\in \cT_\bsrho} |  (\chi,g(\bsz))_\cZ | \;.
$$
The linearity of $I_s[\cdot]$ and of $Q_{N,s}[\cdot]$ imply 
that for every $\chi \in \cZ$ holds
$$
(I_s - Q_{N,s})\left[ (\chi,g(\cdot))_\cZ  \right]
=
\left(\chi, (I_s - Q_{N,s})\left[ g(\cdot)\right] \right)_\cZ
\;.
$$
We may therefore estimate with \eqref{eq:QMCei}
%
%
$$
\begin{array}{rcl}
\displaystyle 
\left\| (I_s - Q_{N,s})\left[ g(\cdot)\right] \right\|_\cZ
& = & \displaystyle 
\sup_{\chi\in \cZ: \| \chi \|_\cZ \leq 1} 
\left|
\left( \chi , (I_s - Q_{N,s})\left[ g(\cdot)\right] \right)_\cZ  
\right|
\\
& = & \displaystyle 
\sup_{\chi\in \cZ: \| \chi \|_\cZ \leq 1} 
\left|
(I_s - Q_{N,s})\left[ (g(\cdot) , \chi)_\cZ  \right]
\right|
\\
& \leq & \displaystyle 
C_{\alpha,\bsbeta,b,p}\, N^{-1/p} 
\sup_{\chi\in \cZ: \| \chi \|_\cZ \leq 1}
\|  (g(\cdot),\chi)_\cZ  \|_{\bsrho,\IC}
\\
& = & \displaystyle 
C_{\alpha,\bsbeta,b,p}\, N^{-1/p} 
\sup_{\chi\in \cZ: \| \chi \|_\cZ \leq 1}
\sup_{\bsz\in \cT_\bsrho} |  (g(\bsz),\chi)_\cZ |
\\
& = & \displaystyle 
C_{\alpha,\bsbeta,b,p}\, N^{-1/p} 
\sup_{\bsz\in \cT_\bsrho} 
\sup_{\chi\in \cZ: \| \chi \|_\cZ \leq 1} |  (g(\bsz),\chi)_\cZ |
\\
& = & \displaystyle 
C_{\alpha,\bsbeta,b,p}\, N^{-1/p} 
\sup_{\bsz\in \cT_\bsrho} 
\| g(\bsz) \|_\cZ \;.
\end{array}
$$
\end{proof}
\section{Multilevel HoQMC Bayesian estimators}
\label{sec:MLHOQMC}
\subsection{Multilevel discretization of the forward problem}
\label{sec:MLDiscr}
Multilevel Bayesian estimators are based on a 
\emph{hierarchy of discretizations} of the forward problem
\eqref{eq:NonOpEqn}.
Below we design 
\emph{increasing sequences $\{ s_\ell \}_{\ell \geq 0}$ and $\{ N_\ell \}_{\ell\geq 0}$ of
truncation dimensions and QMC sample numbers} at discretization level $\ell$,
and a 
\emph{decreasing sequence $\{ h_\ell \}_{\ell\geq 0}$ of discretization parameters}.
With this latter sequence, we associate 
\emph{nested, dense sequences 
$\{ \bcX_\ell \}_{\ell\geq 0}  = \{ \bcX^{h_{\ell}} \}_{\ell\geq 0}$
and
$\{ \bcY_\ell \}_{\ell\geq 0}  = \{ \bcX^{h_{\ell}} \}_{\ell\geq 0}$
}
of discretization spaces of 
increasing, finite dimension $M_\ell = {\rm dim}(\bcX_\ell) = {\rm dim}(\bcY_\ell)$. 
We refer to the pairs $(s_\ell, h_\ell )_{\ell \geq 0}$ as 
\emph{PG discretization parameters}, 
and also set $s_{-1} := 1$ and $h_{-1}:=1$.
We denote by 
$q^{(\ell)} = q^{h_\ell}(\bsy_{\{ 1: s_\ell \}}) \in \bcX_\ell$ 
the dimensionally truncated, PG approximated parametric forward solution 
in \eqref{eq:parmOpEqh_trun}.
The consistency error bounds for approximations of the Bayesian posterior, 
\eqref{eq:BayAppr}, imply that the Bayesian estimates incur an
error which is bounded by the error of the PG discretization in the forward problem,
\emph{provided posterior expectations are evaluated exactly}.

The potential $\Phi_\Gamma$ based on the solution of \eqref{eq:NonOpEqn} on 
discretization level $\ell$
is denoted as $\Phi^\ell_\Gamma$, and is given in \eqref{eq:PhiN}.
The corresponding approximations to the posterior density $\Theta(\bsy)$ 
from \eqref{eq:PostDens} are then
\begin{equation}\label{eq:Thetell}
    \Theta_\ell(\bsy)
    :=
    \exp(-\Phi^\ell_\Gamma(u(\bsy_{\{ 1: s_\ell \}});\delta))
    \;.
\end{equation}
The \emph{exact normalization constant}
for the forward model at discretization level $\ell$ is then
\begin{equation}\label{eq:Zell}
    Z_\ell := \int_ U \Theta_\ell(\bsy) \,\bpi(\dd \bsy)\;.
\end{equation}
Note that $\Theta_\ell(\bsy)$, as defined in \eqref{eq:Thetell}, 
includes also a dimension truncation to dimension $s_\ell$.

In our ensuing error analysis of the Bayesian estimators,
we shall use the bound  \eqref{qmc_upper_bound} for various choices of 
$g(\bsz)$; in particular, with
$g(\bsz) = (\phi(q)\Theta - \phi(q^{(\ell)})\Theta_\ell)(\bsz)$ 
both with $\phi \equiv 1$, $\cZ = \IR$ 
and with $\phi(\cdot):\bcX \to \bcZ$ denoting the QoI.

Given a QoI $\phi:\bcX \to \cZ$ and 
noisy observation data $\delta\in Y$ as in \eqref{eq:delta},
we approximate the Bayesian estimate $\bbE^{\bpi^\delta} [\phi]$ 
defined in \eqref{eq:intpsi} by the expectation 
\begin{equation}\label{eq:EpiL}
    \bbE^{\bpi^\delta_\ell} [\phi]
    =
    \frac{1}{Z_\ell}
    \int_U \phi(q^{(\ell)}(\bsy)) \Theta_\ell(\bsy) \,\bpi(\dd\bsy) 
    = \int_U \phi(q^{(\ell)}(\bsy)) \pi^{\delta}_{\ell} (\dd \bsy) 
    = \frac{Z_\ell'}{Z_\ell} \;,
\end{equation}
where $\pi^{\delta}_{\ell}$ is given by the Radon-Nikodym derivative 
$\dd \pi^{\delta}_{\ell} / \dd \pi = \Theta_\ell/ Z_\ell$.


\subsection{Multilevel Bayesian estimators}
\label{sec:BayEst}
We now describe two multilevel estimators, 
the ratio estimator proposed in \cite{SS12,SS13} 
and the splitting estimator, inspired by \cite{BJLTZ15}.
The ratio estimator is based on expanding the two integrals $Z_L'$ and $Z_L$ separately,
and the splitting estimator is based on a joint telescopic expansion of $Z_L'/Z_L$.
In conjunction with deterministic higher order QMC integration \eqref{eq:QMCInt},
either estimator will result in deterministic algorithms for Bayesian estimation
of PDEs with distributed, uncertain input with high convergence rates which are
independent of the dimension of the parameter space.
%
\subsubsection{Multilevel ratio estimator}
\label{sec:QuotEst}
It was proposed in \cite{SS12,SS13} to numerically evaluate
the Bayesian estimates \eqref{eq:intpsi} by ``direct integration'',
i.e.~by applying a deterministic quadrature rule to the (formally)
infinite-dimensional, iterated integrals in the normalization constant $Z$
and the QoI $Z'$. 
In \cite{SS12,SS13}, 
dimension adaptive Smolyak quadrature was used to compute $Z'/Z$.
Here, as in \cite{DGLGCSBIPSL}, we evaluate $Z_L'/Z_L$ 
by approximating the integrals $Z$ and $Z'$ by HoQMC integration.
In \cite{DGLGCSBIPSL} we used single-level estimators for numerator and 
denominator; here, we develop a MLHoQMC estimator. 
To derive it, we write, using the telescopic sum identity and 
denoting integration with respect to the prior $\bpi$ by $I[\cdot]$,
\begin{align}
\label{eq:TelZ'}
    Z_L'
    &= 
    Z_0' + \sum_{\ell=1}^L (Z_\ell'-Z_{\ell-1}') 
    =
    I[ \phi(q^{(0)}) \Theta_0 ]
    +
    \sum_{\ell=1}^L
    I\Big[
        \phi(q^{(\ell)}) \Theta_\ell - \phi(q^{(\ell-1)}) \Theta_{\ell-1}
    \Big] 
    \\
    &= \nonumber
    \int_U \phi(q^{(0)}(\bsy)) \Theta_0(\bsy) \,\bpi(\dd\bsy) +
    \sum_{\ell=1}^L
    \int_U \Big(
        \phi(q^{(\ell)}(\bsy)) \Theta_\ell(\bsy)
        -
        \phi(q^{(\ell-1)}(\bsy)) \Theta_{\ell-1}(\bsy)
    \Big) \,\bpi(\dd\bsy) \;,
\end{align}
and the corresponding ML approximation of the normalization constant $Z$, 
\begin{align} \label{eq:TelZ}
    Z_L &= 
           Z_0 + \sum_{\ell=1}^L (Z_\ell-Z_{\ell-1}) 
          = I[ \Theta_0 ] + \sum_{\ell=1}^L I\Big[ \Theta_\ell - \Theta_{\ell-1} \Big] 
    \\ 
    &= \nonumber
    \int_U \Theta_0(\bsy) \,\bpi(\dd\bsy)
    +
    \sum_{\ell=1}^L
    \int_U \Big(
        \Theta_\ell(\bsy) - \Theta_{\ell-1}(\bsy)
    \Big) \,\bpi(\dd\bsy)
    \;.
\end{align}
We approximate the integrals in \eqref{eq:TelZ'} and \eqref{eq:TelZ} 
on discretization level $\ell=0,\ldots,L$ with 
a higher order QMC rule based on $N_\ell$ points,
resulting in the Multilevel Higher Order QMC estimators $Q_L^\star[Z']$ 
and $Q_L^\star[Z]$, respectively.
Note that both estimators (for $Z$ and $Z'$) can be computed
simultaneously, without having to reevaluate the forward model.
The \emph{multilevel ratio estimator} is thus given by
\begin{equation}\label{eq:QLratio}
    Q^\star_{L,ratio} := Q_L^\star[Z'] / Q_L^\star[Z]
    \;,
\end{equation}
with the individual multilevel approximations (omitting the variables $\bsy$) 
given by the \emph{multilevel algorithm $Q_L^\star$ of \cite{KSS13,DKGS14}}, 
i.e.
\begin{align}\label{eq:MLratioZ'}
    Z_L'
    &\approx
    Q_L^\star[Z']
    :=
    Q_{N_0,s_0}[ \phi(q^{(0)}) \Theta_0 ]
    +
    \sum_{\ell=1}^L
    Q_{N_\ell,s_\ell} \Big[
        \phi(q^{(\ell)}) \Theta_\ell - \phi(q^{(\ell-1)}) \Theta_{\ell-1}
    \Big] \;,\\
    Z_L
    &\approx
    Q_L^\star[Z]
    :=
    Q_{N_0,s_0}[ \Theta_0 ]
    +
    \sum_{\ell=1}^L
    Q_{N_\ell,s_\ell} \Big[ \Theta_\ell - \Theta_{\ell-1} \Big]
    \;.
    \label{eq:MLratioZ}
\end{align}
\subsubsection{Multilevel splitting estimator}
\label{sec:SplitEst}
We consider approximations of the form \eqref{eq:EpiL} on each 
discretization level $\ell=0,\ldots,L$ and write 
\begin{equation}\label{eq:BIsplit}
    \frac{Z_L'}{Z_L}
    =
    \frac{Z_0'}{Z_0}
    +
    \sum_{\ell=1}^L
    \left(
        \frac{Z_\ell'}{Z_\ell} - \frac{Z_{\ell-1}'}{Z_{\ell-1}}
    \right)
    \;.
\end{equation}
We now apply QMC quadrature to the integrals in $Z_\ell$, $Z_\ell'$
for each $\ell=0,\ldots,L$, where we approximate all terms on 
the same discretization level $\ell$ (i.e.~all terms inside the parenthesis)
by a QMC rule with $N_\ell$ points, resulting in the 
Multilevel High Order QMC estimator $Q_L^\ast$.
Denoting by $Z_{\ell,k} = Q_{N_k}[Z_\ell]$ and $Z_{\ell,k}' = Q_{N_k}[Z_\ell']$
the HoQMC approximations of $Z_\ell$ and $Z_\ell'$ on quadrature level $k$,
the \emph{multilevel QMC splitting estimator} 
reads
\begin{equation}\label{eq:QLsplit}
    Q^\star_{L,split}
    :=
    \frac{Z_{0,0}'}{Z_{0,0}}
    +
    \sum_{\ell=1}^L
    \left(
        \frac{Z_{\ell,\ell}'}{Z_{\ell,\ell}}
        -
        \frac{Z_{\ell-1,\ell}'}{Z_{\ell-1,\ell}}
    \right)
    \;.
\end{equation}
Thus, on discretization level $\ell=0$ we must compute the two approximations
$Z_{0,0} = Q_{N_0}[\Theta_0]$ and $Z_{0,0}'=Q_{N_0}[\phi(q^0) \Theta_0]$,
and for each discretization level $\ell=1,\ldots,L$ we 
evaluate four approximations
\begin{align}\label{eq:fourintegrals}
    Z'_{\ell,\ell}   &= Q_{N_\ell,s_\ell}\big[ \phi(q^{(\ell)}(\bsy))\Theta_\ell(\bsy) \big] &
    Z'_{\ell-1,\ell} &= Q_{N_\ell,s_\ell}\big[ \phi(q^{(\ell-1)}(\bsy))\Theta_{\ell-1}(\bsy) \big] 
    \nonumber\\
    Z_{\ell,\ell}    &= Q_{N_\ell,s_\ell}\big[ \Theta_\ell(\bsy) \big] &
    Z_{\ell-1,\ell}  &= Q_{N_\ell,s_\ell}\big[ \Theta_{\ell-1}(\bsy) \big]
    \;.
\end{align}
Since all four approximations involve the same QMC quadrature points,
the additional quadratures do not require 
extra solutions of the forward model on level $\ell$ or $\ell-1$.
Therefore, they do not increase the work significantly,
assuming the cost of representing elements of $\cZ$ in an implementation are negligible.
In the case where different truncation dimensions $s_\ell>s_{\ell-1}$ are used,
the ``quadrature point'' $\bsy_{s_\ell}$ on level $\ell$ 
can be truncated at dimension $s_{\ell-1}$:
$\bsy_{s_{\ell-1}} := (\bsy_{s_\ell})_{\{1:  s_{\ell-1}\}} 
  \in[-\frac12,\frac12]^{s_{\ell-1}}$ on level $\ell-1$.

\begin{remark}\label{rmk:Splt}
The differences in the formula \eqref{eq:BIsplit} can be written 
as the following expression (bearing in mind that index $\ell$ 
signifies implied dimension truncation to dimension $s_\ell$)
\begin{align}\label{eq:DiffEx}
    \frac{Z_{\ell}'}{Z_{\ell}}
    -
    \frac{Z_{\ell-1}'}{Z_{\ell-1}}
    & =
    \int_{\bsy\in [-1/2,1/2]^{s_\ell}}
    \left(
    \phi(q^{(\ell)}(\bsy))
    -
    \frac{\Theta_{\ell-1}(\bsy)Z_\ell}{\Theta_\ell(\bsy) Z_{\ell-1}}
    \phi(q^{(\ell-1)}(\bsy))
    \right)
    \bpi^\delta_\ell(\dd\bsy)
    \;,
\end{align}
see \eqref{eq:EpiL}.
This form corresponds to the splitting which is 
customary in MCMC and SMC methods, see, e.g.~\cite{BJLTZ15}
and the references there. 
We expect the application of QMC quadratures 
in finite arithmetic to the integrand function in \eqref{eq:DiffEx} to be
advantageous for small observation covariance $\Gamma$.
The integrand function of \eqref{eq:DiffEx} is, 
\emph{upon suitable ($\Gamma$-dependent) rescaling of coordinates as in \cite[Theorem 4.1]{SS14},}
$(\bsb,p,\eps)$-holomorphic \emph{uniformly with respect to $\Gamma>0$}.
\end{remark}
\subsection{Error analysis}
\label{sec:ErrAn}
Each of the proposed computable 
estimators \eqref{eq:QLratio}, \eqref{eq:QLsplit}
is based on approximating posterior expectations
of PG discretizations and dimensionally truncated forward models
by HoQMC integration.
The ensuing error analysis is based on the dimension independent
error bounds for these quadratures which we developed in 
\cite{DKGNS13,DKGS14}. 
These bounds are based on estimates of higher 
order derivatives of the integrand functions with respect to the integration
variables. 
In \cite[Section 3]{DLGCS14}, 
we developed estimates for these derivatives based on analytic continuation
of integrand functions into the complex domain and Cauchy's integral formula;
this is feasible for holomorphic forward problems considered in \cite{CCS2},
and for general, holomorphic potentials $\Phi_\Gamma$.
As is by now well-known, and in contrast to our 
single-level QMC error analysis for the quotient estimator in \cite{DGLGCSBIPSL},
multilevel QMC error estimates involve bounding
\emph{quadrature errors of PG forward problem discretization errors}.
To use the higher order QMC error bounds of \cite{DLGCS14} 
in the present multilevel context therefore requires 
\emph{
bounds on the dimension truncation and PG discretization 
errors of analytic continuations of the integrand functions.
}
These bounds are to hold uniformly 
with respect to the truncation dimension $s_\ell$.
The present, analytic continuation approach should be
contrasted with the ``real-variable'' approach used in
the error analysis of QMC integration in \cite{DKGS14}, 
which is based on bootstrapping arguments and induction.
%
\subsubsection{Holomorphy assumptions on the PG approximation}
\label{sec:HolPG}
To ensure holomorphy of countably parametric 
families of integrand functions in the Bayesian estimators
\eqref{eq:QLratio}, \eqref{eq:QLsplit}, 
as required by the QMC error bounds stated in Proposition \ref{prop:main1},
we impose corresponding holomorphy assumptions on the parametric
forward problem \eqref{eq:paraOpEq} as well as on its PG discretization
\eqref{eq:parmOpEqh_trun}. We formalize these conditions
(which are versions of the uniform parametric regularity 
{\bf UPR} and the parametric stability) in the following:

(i) ${\rm {\bf HCP}_t}$ Holomorphic continuation 
of the parametric forward problem in $\bcX_t$:
there is a $(\bsb,p,\eps)$-holomorphic extension of the parametric forward problem \eqref{eq:mainstrunc} 
such that, for any truncation dimension $s\in \IN$, and for every 
$\bsz\in O_\bsrho$, the extended problem
\be\label{eq:mainstrunC}
\mbox{find}\;q^s(\bsz_{\{ 1:s\}}) \in \bcX:\quad 
{ _{\bcY'} \langle \cR(u^s(\bsz);q^s(\bsz)), w \rangle_{\bcY} } = 0 
\;\;\forall w \in \bcY 
\;,
\ee
admits a unique, parametric solution $q^s(\bsz_{\{ 1:s\}}) \in \bcX_t$
which is  $(\bsb,p,\eps)$-holomorphic. 

(ii) {\bf PGStabC} Stability and Quasioptimality of the PG discretization 
for the complex parametric extension: 
for every $\bsz \in O_\bsrho$, and for every $0<h \leq h_0$
the Galerkin approximations: 
\begin{equation} \label{eq:parmOpEqh_bz}
\mbox{find} \; q_h(\bsz) \in \bcX^h :
\quad
{_{\bcY'}}\langle \cR(\bsz; q_h(\bsz)), w^h \rangle_{\bcY} = 0
\quad 
\forall w^h\in \bcY^h\;,
\end{equation}
are uniquely defined and converge quasioptimally:
there exists a constant $C>0$ such that 
for all $\bsz\in O_\bsrho$, 
with a $(\bsb_t, \varepsilon)$-admissible poly-radius $\bsrho$,
\begin{equation} \label{eq:quasiopt_bz}
 \| q(\bsz) - q_h(\bsz) \|_{\cX}
 \,\le\, 
\frac{C}{\bar{\mu}} \inf_{0\ne v^h\in \cX^h} \| q(\bsz) - v^h\|_{\cX}
\;.
\end{equation}
We remark that  ${\rm {\bf HCP}_t}$ and  {\bf PGStabC} imply, 
via the approximation property \eqref{eq:apprprop}, 
the convergence rate $O(h^t)$ in $\bcX$ for the 
Galerkin solution $q_h(\bsz) \in \bcX^h$, with implied 
constant independent of the truncation dimension $s$. 

(iii) 
${\rm {\bf ANPGC}}_{t'}$ Aubin-Nitsche argument for the 
PG discretization of the complex parametric extension:
There exists a constant $C>0$ 
such that for every $G\in \bcX'_{t'}$ there holds 
a superconvergence estimate: 
for every $\bsrho$ which is $(\bsb_t,\eps)$-admissible
for sufficiently small $\eps > 0$,
\be\label{eq:ANPGC}
\forall  \bsz \in \calO_\bsrho:\quad \left| G(q(\bsz)) - G(q_h(\bsz)) \right| 
\leq Ch^\tau\;, \quad \tau = t+t' 
\;.
\ee
Examples with valid  conditions (i) - (iii) will be presented
in the numerical experiments Section \ref{sec:NumExp} ahead;
the hypotheses will be verified in particular
for affine-parametric operator equations in Section \ref{sec:VerHyp}.
\subsubsection{Error bounds for the multilevel ratio estimator}
\label{sec:ErrAnRat}
Numerator and denominator of 
the estimator \eqref{eq:QLratio} are 
approximated separately by MLHoQMC estimators,
\eqref{eq:MLratioZ'}, \eqref{eq:MLratioZ}.
We first analyze the error of these approximations,
thereby also generalizing the single-level results in \cite{DLGCS14}.
\emph{
Throughout, we assume that in all densities which occur in the 
integrals \eqref{eq:Z} - \eqref{eq:intpsi} 
the parametric forward problems are dimension-truncated to 
finite parameter dimensions $\{s_\ell\}_{\ell = 0}^L$ 
for discretization levels $\ell = 0,1,...,L$, which 
we assume to be strictly increasing}
(the ensuing error analysis remains valid 
with obvious modifications, if
several or all truncation dimensions are equal)
i.e.
\[
0< s_0 < s_1 < ... < s_L <\infty \;.
\]
Throughout the error analysis, we assume $L\geq 2$ and $\theta \in \{1,2\}$
is as in \eqref{eq:Idimtrunc}. 

\begin{theorem}
Assume that $0 \leq t \leq \bar{t}$, for some $\bar{t} > 0$
and that, in addition, there exists $C_0 > 0$ such that
$|Z_\ell| > C_0$, $|Z_{\ell,\ell}| > C_0$ 
uniformly with respect to $\ell=0,1,2,\ldots$.
Assume also ${\rm {\bf HCP}_t}$, ${\bf PGStabC}$ 
hold for some $t>0$.

Then, for every $p_t < \lambda, q < 1$,
with $\theta \in  \{1,2\}$ as in \eqref{eq:Idimtrunc},
\begin{equation}
    \left\| \frac{Q^*_L[Z'_L] } {Q^*_L[Z_L]} - \bbE^{\bpi^\delta}[\phi] \right\|_{\cZ}
\le
C \left[ h_L^t + s_L^{-\theta(1/p_0-1)} + N_0^{-1/\lambda}
   + \sum_{\ell=1}^L N_\ell^{-1/\lambda} 
           ( h_{\ell-1}^{t} + s_{\ell-1}^{-\theta(1/p_0 - 1/q)} )
\right].
\end{equation}
\end{theorem}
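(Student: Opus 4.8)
The plan is to reduce the ratio error to separate error bounds for numerator and denominator, then to split, in each, the finest-level consistency error from the multilevel HoQMC quadrature error. Writing $A := Q^\star_L[Z'_L] \in \cZ$ and $B := Q^\star_L[Z_L] \in \IR$, and recalling $\bbE^{\bpi^\delta}[\phi] = Z'/Z$ from \eqref{eq:intpsi}, I would start from the elementary identity
\begin{equation*}
\frac{A}{B} - \frac{Z'}{Z}
=
\frac{A - Z'}{B} + Z'\,\frac{Z - B}{B\,Z}
\;,
\qquad\mbox{whence}\qquad
\left\| \frac{A}{B} - \frac{Z'}{Z} \right\|_\cZ
\le
\frac{\| A - Z' \|_\cZ}{|B|}
+
\frac{\| Z' \|_\cZ\,| B - Z |}{|B|\,|Z|}
\;.
\end{equation*}
Since $Z>0$ is fixed and, by the hypothesis $|Z_\ell|>C_0$ together with the fact that $B$ approximates $Z_L$ to HoQMC accuracy (so $|B|\ge C_0/2$ once the sample numbers are large), both prefactors are bounded uniformly in $L$. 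It thus suffices to bound $\|A-Z'\|_\cZ$ and $|B-Z|$ by the asserted right-hand side.

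Next I would split each error through the exact, discretised normalisation constants $Z_\ell$ of \eqref{eq:Zell}, writing $B - Z = (Q^\star_L[Z_L]-Z_L) + (Z_L-Z)$ and analogously for the numerator with $\phi(q^{(\ell)})\Theta_\ell$ in place of $\Theta_\ell$. The term $Z_L-Z$ is the consistency error of the finest level: the Bayesian consistency bound \eqref{eq:BayAppr} with $\varphi(M_L)\simeq h_L^t$, valid under ${\rm {\bf HCP}_t}$ and ${\bf PGStabC}$ (which yield the $O(h^t)$ Galerkin rate via \eqref{eq:apprprop}), combined with the dimension-truncation estimate \eqref{eq:Idimtrunc}--\eqref{eq:DTbound}, gives $|Z_L-Z|\le C(h_L^t + s_L^{-\theta(1/p_0-1)})$, supplying the two leading terms; for the numerator one uses in addition that $\phi$ is locally Lipschitz, so no rate beyond $h_L^t$ is required.

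It then remains to estimate the multilevel quadrature error. Using the telescoping definitions \eqref{eq:TelZ}, \eqref{eq:MLratioZ} and writing $I-Q_{N_\ell,s_\ell}$ for the level-$\ell$ quadrature error,
\begin{equation*}
Q^\star_L[Z_L] - Z_L
=
(Q_{N_0,s_0}-I)[\Theta_0]
+
\sum_{\ell=1}^L (Q_{N_\ell,s_\ell}-I)\big[\Theta_\ell-\Theta_{\ell-1}\big]
\;.
\end{equation*}
To each summand I apply the HoQMC bound \eqref{qmc_upper_bound} of Proposition \ref{prop:main1} with summability exponent $\lambda\in(p_t,1)$, which is admissible since $\bsb_t\in\ell^{p_t}\subset\ell^\lambda$; the integrands $\Theta_\ell$ and $\Theta_\ell-\Theta_{\ell-1}$ are $(\bsb,\lambda,\eps)$-holomorphic because $q^{(\ell)}$ is, by ${\rm {\bf HCP}_t}$ and ${\bf PGStabC}$, and $\exp(-\Phi_\Gamma)$ is a bounded holomorphic composition. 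The level-$0$ term contributes $C\,N_0^{-1/\lambda}\|\Theta_0\|_{\bsrho,\IR}$, while for $\ell\ge1$ the factor $N_\ell^{-1/\lambda}$ appears multiplied by $\|\Theta_\ell-\Theta_{\ell-1}\|_{\bsrho,\cZ}$, so that the remaining task is the single key bound $\sup_{\bsz\in\cT_\bsrho}|\Theta_\ell(\bsz)-\Theta_{\ell-1}(\bsz)|\le C(h_{\ell-1}^t + s_{\ell-1}^{-\theta(1/p_0-1/q)})$ for $q\in(p_t,1)$.

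The hard part is precisely this last estimate: controlling the difference of consecutive, holomorphically continued posterior densities \emph{uniformly over the complex tube $\cT_\bsrho$}. I would establish it by first using that $\Theta=\exp(-\Phi_\Gamma)$, with $\Phi_\Gamma$ the quadratic form \eqref{eq:DefPhiu}, is Lipschitz in $q$ on the bounded holomorphy domain (boundedness of the continuation is condition~(3) of Definition \ref{def:peanalytic}), reducing matters to $\sup_{\bsz\in\cT_\bsrho}\|q^{(\ell)}(\bsz)-q^{(\ell-1)}(\bsz)\|_\bcX$. Inserting the exact continued solution $q(\bsz)$ and applying the triangle inequality, the coarser level dominates: the discretisation part is bounded by the complex-parametric Galerkin rate \eqref{eq:quasiopt_bz} combined with \eqref{eq:apprprop}, giving $C\,h_{\ell-1}^t$ uniformly in $s$ and in $\bsz\in\cT_\bsrho$, while the truncation part follows from the analytically continued dimension-truncation estimate \eqref{eq:Vdimtrunc}--\eqref{eq:DTbound}, the exponent $\theta(1/p_0-1/q)$ arising from balancing the admissible holomorphy-weight decay (exponent $q$) against the $\ell^{p_0}$-summability of $\bsb_0$. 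Substituting this bound, summing over $\ell$, and combining with the denominator estimate yields exactly the asserted right-hand side; the free parameters $\lambda,q\in(p_t,1)$ quantify the loss incurred in passing from $\ell^{p_t}$-summability to strict HoQMC rates.
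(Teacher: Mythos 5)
Your proposal is correct and follows essentially the same route as the paper's proof: the same decomposition into a level-$L$ consistency error (via Proposition \ref{prop:ErrBayesExpec} and \eqref{eq:Idimtrunc}--\eqref{eq:DTbound}) plus a telescoped multilevel HoQMC quadrature error, the same application of Proposition \ref{prop:main1} to each increment, and the same key estimate bounding $\sup_{\bsz\in\cT_\bsrho}$ of the differences of consecutive (continued) densities by $C(h_{\ell-1}^t + s_{\ell-1}^{-\theta(1/p_0-1/q)})$ through ${\rm {\bf HCP}_t}$, ${\bf PGStabC}$ and the continued truncation bound. The only cosmetic differences are that you perform the ratio reduction at the outset rather than at the end, and the paper exploits linearity of $\phi$ explicitly to split $\phi(q^{(\ell)})\Theta_\ell-\phi(q^{(\ell-1)})\Theta_{\ell-1}$ into the two terms $D^\ell_\phi\Theta_\ell+\phi(q^{(\ell-1)})D^\ell_\Theta$, which your Lipschitz argument reproduces in substance.
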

\begin{proof}
Since $ \bbE^{\bpi^\delta}[\phi] = Z'/Z$, we estimate 
\be \label{eq:MLZ'Bd}
Z' - Q_L^\ast[Z_L'] = Z' - Z_L' + Z_L' - Q_L^\ast[Z_L'] =: I_L + II_L \;.
\ee
The first term $I_L$ is the exact posterior expectation on the
approximate forward model, i.e.~a pure discretization error. 
It can be estimated using Proposition \ref{prop:ErrBayesExpec} 
and the approximation property \eqref{eq:apprprop} 
with property ${\bf HCP}_t$.
There exists a constant $C>0$ 
which is independent of $h_L$ and of $s_L$ such that
\be\label{eq:DiscErr} 
\| Z' - Z_L' \|_{\cZ} \leq C (h_L^t + s_L^{-\theta(1/p_0-1)}) 
\;.
\ee
%
To bound term $II_L$ in \eqref{eq:MLZ'Bd}, we write it as telescoping sum
\be\label{eq:TlcpID}
Z_L' - Q_L^\ast[Z_L'] 
= 
(I-Q_{N_0,s_0})[\phi(q^0) \Theta_0 ] 
+
\sum_{\ell=1}^L
\left[
(I-Q_{N_\ell,s_\ell})[\phi(q^{(\ell)}) 
\Theta_{\ell} - \phi(q^{(\ell-1)}) \Theta_{\ell-1} ]
\right]
\;.
\ee
%
%
The first term in \eqref{eq:TlcpID} is a pure QMC integration error,
on the coarsest level PG discretization; it can be estimated
with \eqref{qmc_upper_bound}. The remaining terms are QMC 
quadrature errors of dimension truncation and Petrov-Galerkin errors.
A typical term reads 
\begin{equation}\label{eq:QMCPGTT}
(I-Q_{N_\ell,s_\ell})[\phi( q^{(\ell)} ) \Theta_{\ell} - \phi(q^{(\ell-1)} ) \Theta_{\ell-1} ]\;,
\quad \ell = 1,..., L \;.
\end{equation}
We rewrite a generic term in the telescoping sum \eqref{eq:TlcpID} as
(parameter $\bsz$ not indicated)
\[
\phi(q^{(\ell)}) \Theta_{\ell} - \phi(q^{\ell-1}) \Theta_{\ell-1}
=
\phi(q^{(\ell) - q^{(\ell-1)} }) \Theta_{\ell}
+
\phi(q^{(\ell-1)} )( \Theta_{\ell} - \Theta_{\ell-1} ) 
=: 
D^\ell_\phi \Theta_{\ell} + \phi(q^{(\ell-1)} ) D^\ell_\Theta
\;.
\]
Here, we used the linearity of the QoI $\phi( )$.

To apply the QMC quadrature error bound \eqref{qmc_upper_bound}
with 
$g(\bsz) = (\phi(q^{(\ell) } ) \Theta_{\ell} - \phi( q^{(\ell-1)}) \Theta_{\ell-1} )(\bsz)$,
we need to estimate (cp. \eqref{eq:TrhoNZ}) for 
$\bsz \in \cT_\bsrho$ with $\bsrho$ being $(\bsbeta_t,\eps)$-admissible 
%
\be\label{eq:DetEst}
\begin{array}{rcl}
\| g \|_{\bsbeta,\cZ} 
&:=& \displaystyle  
\sup_{\bsz \in \cT_\bsrho} 
\| (\phi( q^{(\ell)}) \Theta_{\ell} - \phi( q^{(\ell-1)} ) \Theta_{\ell-1})(\bsz) \|_\cZ 
\\
&\leq& \displaystyle
\sup_{\bsz \in \cT_\bsrho} 
\| D^\ell_\phi(\bsz) \Theta_{\ell}(\bsz) \|_\cZ 
+ 
\sup_{\bsz \in \cT_\bsrho}
\| \phi( q^{(\ell-1)} )(\bsz) D^\ell_\Theta(\bsz) \|_\cZ
\;.
\end{array}
\ee
We estimate each of the two differences in essentially the same way. 
Consider $D^\ell_\phi$. Then
\[
\begin{array}{rcl} 
\| D^\ell_\phi(\bsz) \|_\cZ & \leq & \displaystyle 
\| \phi \|_{\cL(\bcX,\cZ)} \| q^{(\ell)} (\bsz) - q^{(\ell-1)} (\bsz) \|_{\bcX} 
\\
& = & \displaystyle 
\| \phi \|_{\cL(\bcX,\cZ)} 
\| q^{s_\ell}_{h_\ell}(\bsz) - q^{s_\ell}_{h_{\ell-1}}(\bsz) 
+  q^{s_\ell}_{h_{\ell-1}}(\bsz) - q^{s_{\ell-1}}_{h_{\ell-1}}(\bsz) \|_{\bcX} 
\\
& \leq  & \displaystyle 
\| \phi \|_{\cL(\bcX,\cZ)} 
\left[
\| q^{s_\ell}(\bsz) - q^{s_\ell}_{h_\ell}(\bsz) \|_{\bcX}
+
\| q^{s_\ell}(\bsz) - q^{s_\ell}_{h_{\ell-1}}(\bsz) \|_{\bcX}
+
\| q^{s_\ell}_{h_{\ell-1}}(\bsz) - q^{s_{\ell-1}}_{h_{\ell-1}}(\bsz) \|_{\bcX}
\right]
\;.
\end{array}
\]
The first two terms are PG discretization errors 
for the complex-parametric extension of the
operator equation. 
Property ${\rm {\bf HCP}_t}$ 
(holomorphic continuability of the parametric forward problem)
implies the well-posedness of the complex-parametric problem, 
and assumption {\bf PGStabC}
(stability of the PG discretization in the complex-parametric problem) 
implies that the PG discretization errors can be bounded as
\be\label{eq:PGerr}
\sup_{\bsz \in \cT_{\bsrho}} 
\left[ 
\| q^{s_\ell}(\bsz) - q^{s_\ell}_{h_\ell}(\bsz) \|_{\bcX}
+
\| q^{s_\ell}(\bsz) - q^{s_\ell}_{h_{\ell-1}}(\bsz) \|_{\bcX}
\right]
\leq 
C h_{\ell-1}^t \sup_{\bsz \in \cT_{\bsrho}} \| q(\bsz) \|_{\bcX_t} \;,
\ee
provided that $\bsrho$ is $(\bsbeta_t,\eps)$-admissible.
The third term is a dimension truncation error. 
It vanishes if $s_{\ell-1} = s_{\ell}$, 
and by the stability of the PG discretization 
is bounded for any $p_t \leq q \leq 1$ by
\be\label{eq:DTerr}
\sup_{\bsz \in \cT_{\bsrho}}
\| q^{s_\ell}_{h_{\ell-1}}(\bsz) - q^{s_{\ell-1}}_{h_{\ell-1}}(\bsz) \|_{\bcX}
\leq 
C s_{\ell-1}^{-\theta(1/p_0 - 1/q)} 
\;,
\;\; \ell =1,2,...,L-1
\;.
\ee
%
%
As the modulus of the posterior density, $| \Theta_{\ell} (\bsz)|$, 
is uniformly bounded w.r.t. $\ell$ and w.r.t. $\bsz \in \cT_\bsrho$, 
for any $(\bsbeta_0,\eps)$-admissible $\bsrho$, 
the first term in the bound \eqref{eq:DetEst} 
can be estimated for $L\geq 2$ by an absolute multiple of
\be\label{eq:CmbBd}
 h_{\ell-1}^t + s_{\ell-1}^{-\theta(1/p_0 - 1/q)} \;, \quad \ell = 1,2,...,L-1 \;.
\ee
For a fixed $(\bsbeta,\eps)$-admissible poly-radius $\bsrho$, and 
for any $\bsz\in \cT_\bsrho$ we estimate the second term in \eqref{eq:DetEst}
by
\be\label{eq:2ndTerm}
\sup_{\bsz \in \cT_\bsrho}
\| \phi(q^{(\ell-1)})(\bsz) D^\ell_\Theta(\bsz) \|_\cZ
= 
\sup_{\bsz \in \cT_\bsrho} | D^\ell_\Theta(\bsz)| 
\sup_{\bsz \in \cT_\bsrho} \| \phi( q^{(\ell-1)} )(\bsz) \|_\cZ
\;.
\ee
Here, the latter supremum is bounded uniformly w.r.t. $\ell$, due to 
condition {\bf PGStabC}:
the uniform stability of the PG discretization for the complex-parametric problem
implies also the existence of a bound $B(\bsbeta,\eps)$,
independent of the discretization level $\ell$ such that
\[
\forall \bsrho \mbox{ which are $(\bsbeta,\eps)$-admissible}: \;\;
\sup_{\bsz \in \cT_\bsrho} \| q^{(\ell)}(\bsz) \|_\bcX \leq B(\bsbeta,\eps) <\infty \;.
\]
We focus on the first supremum in \eqref{eq:2ndTerm}.
\be \label{eq:PstDnsEst}
\begin{array}{rcl}
\sup_{\bsz \in \cT_\bsrho} | D^\ell_\Theta(\bsz)|
& = & \displaystyle 
\sup_{\bsz \in \cT_\bsrho} | \Theta^{s_\ell}_{h_\ell}(\bsz) -  \Theta^{s_\ell}_{h_{\ell-1}}(\bsz) 
                           +\Theta^{s_\ell}_{h_{\ell-1}}(\bsz) - \Theta^{s_{\ell-1}}_{h_{\ell-1}}(\bsz)|
\\
& \leq & \displaystyle 
\sup_{\bsz \in \cT_\bsrho} 
\left[
| \Theta^{s_\ell}(\bsz) - \Theta^{s_\ell}_{h_\ell}(\bsz)|
+ 
|\Theta^{s_\ell}(\bsz) -  \Theta^{s_\ell}_{h_{\ell-1}}(\bsz)|
+
|\Theta^{s_\ell}_{h_{\ell-1}}(\bsz) - \Theta^{s_{\ell-1}}_{h_{\ell-1}}(\bsz)|
\right]
\;.
\end{array}
\ee
We deal with three errors in the posterior density, 
which arise due to different approximations in the forward maps. 

To reduce the error in the posterior density to an
error bound in the corresponding potentials $\Phi_\Gamma$,
we recall definition \eqref{eq:DefPhiu} (applied with transposition to
the complex-parametric forward map) and write
\[
\left|
\int_{-\Phi_\Gamma}^{-\Phi^M_\Gamma} e^\zeta d\zeta 
\right|
=
\left| \exp(-\Phi^M_\Gamma) - \exp(-\Phi_\Gamma) \right|
\leq 
| \Phi^M_\Gamma - \Phi_\Gamma | 
\max_{\zeta \in {\rm conv}(-\Phi^M_\Gamma, -\Phi_\Gamma)} |e^\zeta|
\],
where the complex integral is along a straight line in $\IC$ connecting
$-\Phi^M_\Gamma$ and $-\Phi_\Gamma$.

The difference $ \Phi_\Gamma - \Phi^{h_\ell}_\Gamma $ 
in the Bayesian potentials 
due to the PG discretization (at equal truncation dimension $s_\ell$) 
of the forward problem
is estimated using (a complex-parametric extension of) \eqref{eq:qConsis} 
again in terms of the PG forward discretization error. 

We obtain for every $\bsrho$ which is $(\bsbeta_t,\eps)$-admissible
the bound
\[
\sup_{\bsz \in \cT_\bsrho} 
\left[
| \Theta^{s_\ell}(\bsz) - \Theta^{s_\ell}_{h_\ell}(\bsz)|
+ 
|\Theta^{s_\ell}(\bsz) -  \Theta^{s_\ell}_{h_{\ell-1}}(\bsz)|
\right]
\leq 
C(t,\eps) h_{\ell-1}^t
\;.
\]
The third term in \eqref{eq:PstDnsEst} is bounded in the same way 
in terms of the error \eqref{eq:DTerr}, resulting also for \eqref{eq:2ndTerm}
in the upper bound \eqref{eq:CmbBd}.
Collecting all bounds, we have shown that
for any poly-radius $\bsrho$ which is $(\bsbeta_t,\eps)$-admissible,
there exists a constant $C(\bsbeta_t,\eps) > 0$ such that 
for any $p_t \leq \lambda < 1$ and for all $\bsz \in  \cT_\bsrho$ 
\[
\begin{array}{rcl}
\| Z_L' - Q_L^\ast[Z_L']\|_\cZ 
& \leq  & \displaystyle 
\| (I-Q_{N_0,s_0})[\phi(q^0) \Theta_0 ] \|_\cZ
+
\sum_{\ell=1}^L
\left\|
(I-Q_{N_\ell,s_\ell})[\phi(q^{(\ell)}) \Theta_{\ell} - \phi(q^{(\ell-1)}) \Theta_{\ell-1}]
\right\|_\cZ
\\
& \leq  & \displaystyle
C\left[ N_0^{-1/p_0} 
+ \sum_{\ell=1}^L N_\ell^{-1/\lambda} ( h_{\ell-1}^t + s_{\ell-1}^{-\theta(1/p_0 - 1/q)} )
\right]
\;.
\end{array}
\]
Combining this with \eqref{eq:DiscErr}, we obtain from \eqref{eq:MLZ'Bd} the error estimate
\be\label{eq:ZZ'Est}
\| Z' - Q_L^\ast[Z_L'] \|_\cZ 
\leq 
C 
\left[ h_L^t + s_L^{-\theta(1/p_0-1)} + N_0^{-1/p_0} 
+ 
\sum_{\ell=1}^L N_\ell^{-1/\lambda} ( h_{\ell-1}^t + s_{\ell-1}^{-\theta(1/p_0 - 1/q)} )
\right]
\;.
\ee
By choosing $\phi \equiv 1$, and $\cZ = \IR$, the bound \eqref{eq:ZZ'Est}
applies also for the approximation $Q_L^\ast[Z_L]$ of $Z$. 

We may now estimate the error in the quotient estimator \eqref{eq:QLratio}.
To this end, we write
\[
\begin{array}{rcl}
\displaystyle
\left\| \frac{Z'}{Z} - \frac{Q^*_L[Z']}{Q^*_L[Z]} \right\|_\cZ
& = & \displaystyle 
\left\| \frac{Z'Q^*_L[Z] - Q^*_L[Z'] Z}{ZQ^*_L[Z]} \right\|_\cZ
\\
& \leq & \displaystyle
\frac{1}{ZQ^*_L[Z]}
\left\{ 
\| Z' \|_{\cZ} | Z-Q^*_L[Z] | + |Z| \| Z' - Q^*_L[Z'] \|_\cZ 
\right\}\;.
\end{array}
\]
Using \eqref{eq:ZZ'Est} twice, we arrive at the error bound for the
ratio estimator,
\be\label{eq:RatEstBd}
\left\| \frac{Z'}{Z} - \frac{Q^*_L[Z']}{Q^*_L[Z]} \right\|_\cZ
\leq 
C \left[ h_L^t + s_L^{-\theta(1/p_0-1)} + N_0^{-1/p_0}
   + \sum_{\ell=1}^L N_\ell^{-1/\lambda} 
                  ( h_{\ell-1}^t + s_{\ell-1}^{-\theta(1/p_0 - 1/q)} )
\right]
\;.
\ee
\end{proof}
%
\subsubsection{Error bounds for the multilevel splitting estimator}
\label{sec:ErrAnSplit}
\begin{theorem}
Assume that $0 \leq t \leq \bar{t}$, for some $\bar{t} > 0$
and that, in addition, there exists $C_0 > 0$ such that
$|Z_\ell| > C_0$, $|Z_{\ell,\ell}| > C_0$ 
uniformly with respect to $\ell=0,1,2,\ldots$.
Assume ${\rm {\bf HCP}_t}$, ${\bf PGStabC}$
hold for some $t>0$.
Then, for every $p_t < \lambda, q < 1$,
with $\theta \in  \{1,2\}$ as in \eqref{eq:Idimtrunc},
\begin{equation} \label{eq:SpltEstBd}
    \big\| Q_{L,split}^{*}[\phi] - \bbE^{\bpi^\delta}[\phi] \big\|_{\cZ}
\le
C \left[ h_L^t + s_L^{-\theta(1/p_0-1)} + N_0^{-1/\lambda}
   + \sum_{\ell=1}^L N_\ell^{-1/\lambda} 
           ( h_{\ell-1}^t + s_{\ell-1}^{-\theta(1/p_0 - 1/q)} )
\right].
\end{equation}
\end{theorem}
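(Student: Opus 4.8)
The plan is to follow the architecture of the preceding ratio-estimator proof, reusing the sup-norm bounds on the difference integrands already established there. Set $R := Z'/Z = \bbE^{\bpi^\delta}[\phi]$ and, for each level, $R_\ell := Z_\ell'/Z_\ell = \bbE^{\bpi^\delta_\ell}[\phi]$. I first split
\[
R - Q_{L,split}^{\ast}[\phi] = (R - R_L) + (R_L - Q_{L,split}^{\ast}[\phi]) .
\]
The first summand is a pure discretization error: since $R - R_L = \bbE^{\bpi^\delta}[\phi] - \bbE^{\bpi^\delta_L}[\phi]$, it is controlled exactly as the term $I_L$ above, by Proposition \ref{prop:ErrBayesExpec} together with the approximation property and the truncation bound \eqref{eq:Idimtrunc}, giving $C(h_L^t + s_L^{-\theta(1/p_0-1)})$.

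For the quadrature part I use the telescopic identity \eqref{eq:BIsplit} and the definition \eqref{eq:QLsplit}. Writing $\widehat{R}_{m,k} := Z_{m,k}'/Z_{m,k} = Q_{N_k}[Z_m']/Q_{N_k}[Z_m]$,
\[
R_L - Q_{L,split}^{\ast}[\phi] = (R_0 - \widehat{R}_{0,0}) + \sum_{\ell=1}^L \big[ (R_\ell - \widehat{R}_{\ell,\ell}) - (R_{\ell-1} - \widehat{R}_{\ell-1,\ell}) \big] .
\]
The level-$0$ term is a pure QMC error of a quotient on the coarsest discretization; expanding it as $(Z_0' Z_{0,0} - Z_{0,0}' Z_0)/(Z_0 Z_{0,0})$, using $|Z_0|, |Z_{0,0}| > C_0$ and the HoQMC bound \eqref{qmc_upper_bound} on $\Theta_0$ and on $\phi(q^{(0)})\Theta_0$, it is of order $N_0^{-1/\lambda}$.

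The decisive terms are those with $\ell \ge 1$. Abbreviating the single-level QMC errors $E_m := -(I - Q_{N_\ell,s_\ell})[\Theta_m]$ and $E_m' := -(I - Q_{N_\ell,s_\ell})[\phi(q^{(m)})\Theta_m]$, and the coefficients $A_m := Z_m'/(Z_m\, Q_{N_\ell}[Z_m])$ and $B_m := 1/Q_{N_\ell}[Z_m]$, the quotient identity gives $R_m - \widehat{R}_{m,\ell} = A_m E_m - B_m E_m'$ for $m\in\{\ell-1,\ell\}$. Because $\widehat{R}_{\ell,\ell}$ and $\widehat{R}_{\ell-1,\ell}$ use the \emph{same} $N_\ell$ points, the combinations $E_\ell - E_{\ell-1}$ and $E_\ell' - E_{\ell-1}'$ are single QMC errors of the \emph{difference} integrands $D^\ell_\Theta$ and $g = \phi(q^{(\ell)})\Theta_\ell - \phi(q^{(\ell-1)})\Theta_{\ell-1}$. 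I therefore rearrange
\[
(R_\ell - \widehat{R}_{\ell,\ell}) - (R_{\ell-1} - \widehat{R}_{\ell-1,\ell}) = A_\ell(E_\ell - E_{\ell-1}) + (A_\ell - A_{\ell-1})E_{\ell-1} - B_\ell(E_\ell' - E_{\ell-1}') - (B_\ell - B_{\ell-1})E_{\ell-1}' .
\]
For the first and third pieces I apply \eqref{qmc_upper_bound} to $D^\ell_\Theta$ and to $g$, whose moduli over $\cT_\bsrho$ are bounded by $C(h_{\ell-1}^t + s_{\ell-1}^{-\theta(1/p_0-1/q)})$ in \eqref{eq:DetEst}--\eqref{eq:CmbBd}, while $A_\ell, B_\ell$ are uniformly bounded thanks to $|Z_\ell|, |Z_{\ell,\ell}| > C_0$. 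For the second and fourth pieces, $E_{\ell-1}, E_{\ell-1}'$ are full QMC errors of order $N_\ell^{-1/\lambda}$, so it remains to show the coefficient gaps are of discretization size: since the equal-weight rule is a convex average and $U\subset\cT_\bsrho$, both $|Z_\ell - Z_{\ell-1}| = |I[D^\ell_\Theta]|$ and $|Q_{N_\ell}[Z_\ell] - Q_{N_\ell}[Z_{\ell-1}]| = |Q_{N_\ell,s_\ell}[D^\ell_\Theta]|$ are dominated by $\sup_U|D^\ell_\Theta|$, and likewise $\|Z_\ell' - Z_{\ell-1}'\|_\cZ \le \sup_U\|g\|_\cZ$; combined with the lower bounds $C_0$ this yields $\|A_\ell - A_{\ell-1}\|_\cZ, |B_\ell - B_{\ell-1}| \le C(h_{\ell-1}^t + s_{\ell-1}^{-\theta(1/p_0-1/q)})$. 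Each level-$\ell$ term is thus $O\big(N_\ell^{-1/\lambda}(h_{\ell-1}^t + s_{\ell-1}^{-\theta(1/p_0-1/q)})\big)$, and summing together with the bias and level-$0$ contributions gives \eqref{eq:SpltEstBd}.

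The main obstacle is precisely this control of the coefficient differences. In the ratio estimator the quotient is formed once, at the very end; here each telescope summand already contains an inverted, QMC-approximated normalization, so bounding $R_\ell - \widehat{R}_{\ell,\ell}$ and $R_{\ell-1} - \widehat{R}_{\ell-1,\ell}$ separately would forfeit the multilevel factor and leave only $N_\ell^{-1/\lambda}$. The rearrangement above is designed so that every surviving term carries either a difference integrand or a difference of coefficients; showing that the latter inherit the discretization rate is the crux, and it is here that the uniform lower bounds $|Z_\ell|, |Z_{\ell,\ell}| > C_0$ and the sup-norm preservation of the positive equal-weight HoQMC rule are indispensable.
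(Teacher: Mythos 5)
Your proof is correct, and it rests on the same pillars as the paper's argument: the identical initial split into a discretization bias plus a QMC telescope (your $(R-R_L)+(R_L-Q^{*}_{L,split})$ is precisely the paper's first step), the same sup-norm bounds over $\cT_\bsrho$ on the difference integrands $D^\ell_\Theta$ and $\phi(q^{(\ell)})\Theta_\ell-\phi(q^{(\ell-1)})\Theta_{\ell-1}$ inherited from the ratio-estimator proof, the HoQMC bound \eqref{qmc_upper_bound}, and the uniform lower bounds on the normalization constants. Where you genuinely depart from the paper is in the algebraic organization of the telescope summands. The paper first splits each quotient error as $\tfrac{1}{Z_m}(Z_m'-Z_{m,\ell}')+Z_{m,\ell}'\bigl(\tfrac{1}{Z_m}-\tfrac{1}{Z_{m,\ell}}\bigr)$, groups across levels into the terms $A_\ell$ and $B_\ell$, and then performs a cascade of add-and-subtract manipulations (\eqref{eq:Aell}, \eqref{eq:last}, \eqref{eq:Cell}--\eqref{eq:Dell}, \eqref{eq:4_45a}) until every surviving factor is either a difference integrand or a difference of normalizations. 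You instead establish the exact identity $R_m-\widehat{R}_{m,\ell}=A_mE_m-B_mE_m'$ (which I verified; it holds with your definitions) and then apply a single discrete product-rule rearrangement, producing exactly four terms per level: two QMC errors of difference integrands with uniformly bounded coefficients, and two full QMC errors multiplied by coefficient differences. This is more systematic and makes the multilevel structure transparent in one stroke; the price is that your coefficient differences $A_\ell-A_{\ell-1}$ (note $A_m=Z_m'/(Z_mQ_{N_\ell}[Z_m])$ is $\cZ$-valued) still require essentially the computation the paper carries out in \eqref{eq:4_45a}--\eqref{eq:bounds4_45}: one must add and subtract once more to split $A_\ell-A_{\ell-1}$ into a $Z'$-difference term and a term containing $\tfrac{1}{Z_\ell Z_{\ell,\ell}}-\tfrac{1}{Z_{\ell-1}Z_{\ell-1,\ell}}$; you correctly identify all the needed ingredients but leave this step implicit. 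One shared loose end, present in the paper as well: the off-diagonal quantity $Z_{\ell-1,\ell}=Q_{N_\ell}[Z_{\ell-1}]$ appears in denominators (your $B_{\ell-1}$), and its lower bound is not among the stated hypotheses; it follows from $|Z_{\ell-1}|>C_0$ together with $|Z_{\ell-1,\ell}-Z_{\ell-1}|\le CN_\ell^{-1/\lambda}$ for $N_\ell$ sufficiently large, a remark worth making explicit in either write-up.
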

\begin{proof}

For given sequences 
$\{ s_\ell \}_{\ell = 0}^L$, $\{ N_\ell \}_{\ell=0}^L$ 
of truncation dimensions $s_\ell$ and numbers $N_\ell$ of QMC points, 
and for $\mathbb{E}^{\pi^\delta_\ell}$ given by \eqref{eq:EpiL}, 
we estimate
\begin{equation*}
    \big\| \bbE^{\bpi^\delta}[\phi] - Q_L^{*}[\phi] \big\|_{\cZ}
    \le
    \big\| \bbE^{\bpi^\delta} [\phi] - \bbE^{\bpi^\delta_L}[\phi] \big\|_{\cZ} 
     +
    \big\| \bbE^{\bpi^\delta_L} [\phi]  - Q_L^{*}[\phi] \big\|_{\cZ}
\;.
\end{equation*}
To bound the first term, we use 
Proposition~\ref{prop:ErrBayesExpec} (see \eqref{eq:BayAppr})
and the approximation property \eqref{eq:apprprop} 
and the dimension truncation estimate \eqref{eq:Vdimtrunc}
with the discretization parameters $\{ (s_\ell, N_\ell) \}_{\ell = 0,...,L}$,
\begin{equation*}
    \big\| \bbE^{\bpi^\delta}[\phi] - \bbE^{\bpi^\delta_L}[\phi] \big\|_{\cZ}
    \le
    C(\Gamma,R,r) \left( h^t_L + s_L^{-\theta(1/p_0-1)} \right) 
\;.
\end{equation*}
To bound the second term, we write, after 
subtracting \eqref{eq:BIsplit} and \eqref{eq:QLsplit}
\[
\bbE^{\pi^\delta_L}[\phi] - Q^{*}_{L,split}
= \frac{Z'_0}{Z_0} - \frac{Z'_{0,0}}{Z_{0,0}}
  + \sum_{\ell=1}^L 
\left[\left( \frac{Z'_{\ell}}{Z_{\ell}}
 - \frac{Z'_{\ell,\ell}}{Z_{\ell,\ell}} \right)
    - \left( \frac{Z'_{\ell-1}}{Z_{\ell-1}}
  - \frac{Z'_{\ell-1,\ell}}
   {Z_{\ell-1,\ell}} \right) \right]
\;.
\]
We rewrite the difference between the $\ell$-th term in 
the sum over $\ell = 1,...,L$ in \eqref{eq:BIsplit} and \eqref{eq:QLsplit}
as
\begin{align*}
\left( \frac{Z'_{\ell}}{Z_{\ell}}
 - \frac{Z'_{\ell,\ell}}{Z_{\ell,\ell}} \right)
    - \left( \frac{Z'_{\ell-1}}{Z_{\ell-1}}
  - \frac{Z'_{\ell-1,\ell}}
   {Z_{\ell-1,\ell}} \right)
 &=
\underbrace{\frac{1}{Z_\ell} (Z'_{\ell} - Z'_{\ell,\ell})
- \frac{1}{Z_{\ell-1}} (Z'_{\ell-1} - Z'_{\ell-1,\ell})}_{A_\ell} \\
 & + 
\underbrace{Z'_{\ell,\ell}
  \left( \frac{1}{Z_\ell} - \frac{1}{Z_{\ell,\ell}}\right) 
-
 Z'_{\ell-1,\ell}
\left( \frac{1}{Z_{\ell-1}} - \frac{1}{Z_{\ell-1,\ell}} \right)}_{B_\ell}
\;.
\end{align*}
We rewrite $A_\ell$ upon adding and subtracting the term 
$(Z'_{\ell-1}-Z'_{\ell-1,\ell})/Z_{\ell} $ as
\begin{align}
A_{\ell} &= A_{\ell} + 
 (Z'_{\ell-1}-Z'_{\ell-1,\ell})/Z_{\ell}
- (Z'_{\ell-1}-Z'_{\ell-1,\ell})/Z_{\ell}\nonumber \\
&=
\frac{1}{Z_\ell}[ (Z'_{\ell}-Z'_{\ell,\ell}) - (Z'_{\ell-1} - Z'_{\ell-1,\ell})]
+ (Z'_{\ell-1}- Z'_{\ell-1,\ell}) 
\left( \frac{1}{Z_\ell} - \frac{1}{Z_{\ell-1}} \right)
\label{eq:Aell}
\;.
\end{align}
For the first term in the right hand side of \eqref{eq:Aell},
we observe that
\begin{align*}
(Z'_{\ell} - Z'_{\ell,\ell}) - (Z'_{\ell-1} - Z'_{\ell-1,\ell})
&= (I[\phi(q^{(\ell)}) \Theta_{\ell }] 
  - Q_{N_\ell}[\phi(q^{(\ell)}) \Theta_{\ell} ])
- (I[\phi( q^{(\ell-1) } ) \Theta_{\ell-1}] 
  - Q_{N_\ell}[\phi( q^{(\ell-1)}) \Theta_{\ell-1} ]) 
\\
&= 
(I-Q_{N_{\ell}}) 
[ \phi(q^{(\ell)}) \Theta_{\ell} - \phi(q^{(\ell-1)}) \Theta_{\ell-1} ]
\;.
\end{align*}
Using the estimates of \eqref{eq:QMCPGTT}
in the previous section we obtain 
\[
\left\|
(Z'_{\ell}-Z'_{\ell,\ell}) - (Z'_{\ell-1} - Z'_{\ell-1,\ell})
\right\|_\cZ
\le 
C N_\ell^{-1/\lambda} ( h^t_{\ell-1} + s_{\ell-1}^{-\theta(1/p_0 - 1/q)})
\]
where $C>0$ is independent of $\{s_{\ell}\}_{\ell\geq 0}$ and of $N_\ell$.
For the second term in \eqref{eq:Aell}, we have
\[
\|Z'_{\ell-1} - Z'_{\ell-1,\ell}\|_\cZ \le CN_\ell^{-1/\lambda}
\]
and
\[
\left| \frac{1}{Z_\ell} - \frac{1}{Z_{\ell-1}} \right|
= 
\frac{|Z_{\ell-1}- Z_{\ell}|}{|Z_\ell Z_{\ell-1}|}
\le 
C (h^t_{\ell-1} + s_{\ell-1}^{-\theta(1/p_0-1/q)})
\;.
\]
Adding and subtracting the term
$Z'_{\ell-1,\ell}(1/Z_{\ell}-1/Z_{\ell,\ell})$,
we may rewrite $B_\ell$ as
\begin{align}
B_\ell &= B_\ell 
- Z'_{\ell-1,\ell}\left( \frac{1}{Z_{\ell-1}} - \frac{1}{Z_{\ell,\ell}} \right)
+ Z'_{\ell-1,\ell}\left( \frac{1}{Z_{\ell-1}} - \frac{1}{Z_{\ell,\ell}} \right)\nonumber 
\\
&= (Z'_{\ell,\ell}- Z'_{\ell-1,\ell})
\left( \frac{1}{Z_\ell} - \frac{1}{Z_{\ell,\ell}} \right)
+ Z'_{\ell-1,\ell}
\left( \frac{1}{Z_\ell} - \frac{1}{Z_{\ell,\ell}} 
 - (\frac{1}{Z_{\ell-1}} - \frac{1}{Z_{\ell-1,\ell}}) \right)
\;.
\label{eq:last}
\end{align}
Using Proposition~\ref{prop:main1} and the fact that
$| \Theta_{\ell} (\bz)|$ is uniformly bounded with
respect to $\ell$ and $\bz \in \cT_\rho$, we have
\begin{equation}\label{eq:Zellell}
| Z_{\ell,\ell} - Z_{\ell} | 
=
| Q_{N_{\ell}}[\Theta_{\ell}] - I[\Theta_{\ell}] |
\le 
CN_{\ell}^{-1/\lambda}
\;.
\end{equation}
If $|Z_\ell| > C_0$, $|Z_{\ell,\ell}| > C_0$ 
uniformly with respect to $\ell=0,1,2,\ldots$,
for some $C_0 > 0$, then there exists $C>0$ 
(depending on $C_0$, but independent of $\ell$ or $N_\ell$)
such that
\begin{equation}\label{eq:overZell}
\left| \frac{1}{Z_\ell} - \frac{1}{Z_{\ell,\ell}} \right|
\le 
\frac{|Z_{\ell,\ell} - Z_{\ell}|} {|Z_\ell| |Z_{\ell,\ell}| }
\le 
C N_\ell^{-1/\lambda}
\;.
\end{equation}
Using similar techniques as in the estimates of \eqref{eq:PstDnsEst}, 
\[
\|Z'_{\ell,\ell} - Z'_{\ell-1,\ell}\|_{\cZ} 
\le 
C( h^t_{\ell-1} + s_{\ell-1}^{-\theta(1/p_0-1/q)})
\;.
\]
Thus, for any sequence  $\{h_\ell\}_{\ell \geq 0}$ or $\{ s_\ell \}_{\ell \geq 0}$, 
the $\| \cdot \|_\cZ$-norm of the first term of \eqref{eq:last} is 
bounded by 
$O(N_\ell^{-1/\lambda} (h^t_{\ell-1} + s_{\ell-1}^{-\theta(1/p_0-1/q)}))\;$,
%
for any $p_t\leq\lambda,q < 1$.

For the second term of \eqref{eq:last},
adding and subtracting
the same term $(Z_{\ell-1,\ell}-Z_{\ell-1})/(Z_{\ell}Z_{\ell,\ell})$
\begin{align}
\frac{1}{Z_\ell} - \frac{1}{Z_{\ell,\ell}} 
 - (\frac{1}{Z_{\ell-1}} - \frac{1}{Z_{\ell-1,\ell}})
&= \frac{Z_{\ell,\ell}-Z_{\ell}} {Z_{\ell} Z_{\ell,\ell}}
 - \frac{Z_{\ell-1,\ell}-Z_{\ell-1}}{ Z_{\ell} Z_{\ell,\ell}} 
 + \frac{Z_{\ell-1,\ell}-Z_{\ell-1}}{ Z_{\ell} Z_{\ell,\ell}} 
 - \frac{Z_{\ell-1,\ell} - Z_{\ell-1}}{Z_{\ell-1}Z_{\ell-1,\ell}}
\nonumber \\
& =\frac{1}{Z_{\ell} Z_{\ell,\ell}}
 [(Z_{\ell,\ell}-Z_\ell)-(Z_{\ell-1,\ell}-Z_{\ell-1})]
\label{eq:Cell} 
\\
 & \quad + \left( \frac{1}{Z_{\ell} Z_{\ell,\ell}} - 
  \frac{1}{Z_{\ell-1} Z_{\ell-1,\ell}} \right)(Z_{\ell-1,\ell}-Z_{\ell-1})
\;.
\label{eq:Dell}
\end{align}
We bound \eqref{eq:Cell} by similar techniques 
as in the estimates \eqref{eq:QMCPGTT} to arrive at the bound
\[
C N_\ell^{-1/\lambda}(h^t_{\ell-1} + s^{-\theta(1/p_0-1/q)}_{\ell-1})
\;.
\]
For the remaining term \eqref{eq:Dell}, we rewrite the first factor as
\begin{equation}\label{eq:4_45a}
\frac{1}{Z_{\ell} Z_{\ell,\ell}}
- \frac{1}{Z_{\ell-1} Z_{\ell-1,\ell}}
= \frac{Z_{\ell-1}Z_{\ell-1,\ell} - Z_{\ell} Z_{\ell,\ell}}
{ Z_{\ell} Z_{\ell,\ell} Z_{\ell-1} Z_{\ell-1,\ell}}
= \frac{Z_{\ell-1,\ell}(Z_{\ell-1} - Z_{\ell})
   +Z_\ell(Z_{\ell-1,\ell}- Z_{\ell,\ell})}
{ Z_{\ell} Z_{\ell,\ell} Z_{\ell-1} Z_{\ell-1,\ell}}
\;.
\end{equation}
Using similar estimates as in \eqref{eq:PstDnsEst}, 
we conclude
that there exists $C>0$ independent of 
$\{s_\ell\}_{\ell \geq 0}$ and of $\{h_\ell\}_{\ell \geq 0}$
such that 
\begin{equation} \label{eq:bounds4_45}
\begin{array}{rl} 
|Z_{\ell-1}-Z_{\ell}| 
&= |I( \Theta_{\ell-1} - \Theta_{\ell} )| 
\le C (h^t_{\ell-1} + s^{-\theta(1/p_0-1/q)}_{\ell-1}), 
\\
|Z_{\ell-1,\ell}-Z_{\ell,\ell}|
&= |Q_{N_{\ell}} ( \Theta_{\ell-1} - \Theta_{\ell} )|
\le 
\sup_{\bz \in \cT_\rho} | \Theta_{\ell-1}(\bz) - \Theta_{\ell}(\bz)|
\le 
C (h^t_{\ell-1}+s^{-\theta(1/p_0-1/q)}_{\ell-1})
\;.
\end{array}
\end{equation}
All $|Z_{\ell}|$ are bounded above and below away from zero
for $\ell=0,1,2,\ldots$, and the first factor of \eqref{eq:Dell}
is bounded by the RHS of \eqref{eq:bounds4_45}. 
The second factor of \eqref{eq:Dell} is estimated by
\[
  |Z_{\ell-1,\ell} - Z_{\ell-1}| = 
  |Q_{N_{\ell}}[ \Theta_{\ell-1}] - I [\Theta_{\ell-1}]| 
  \le C N_{\ell}^{-1/\lambda}\;.
\]
Combining all estimates for \eqref{eq:Cell} and \eqref{eq:Dell}, 
we conclude that there exists a constant $C>0$ such that,
for every $p_t \leq \lambda, q \leq 1$, and for all
sequences  $\{h_\ell\}_{\ell \geq 0}$,
$\{ s_\ell \}_{\ell \geq 0}$ and $\{N_\ell\}_{\ell \geq 0}$ 
holds
%
\begin{align*}
\left|
\left( \frac{1}{Z_\ell} - \frac{1}{Z_{\ell,\ell}} \right)
 - \left(\frac{1}{Z_{\ell-1}} - \frac{1}{Z_{\ell-1,\ell}}\right)
\right|
&\le C N_\ell^{-1/\lambda} (h^t_{\ell-1} + s_{\ell-1}^{-\theta(1/p_0-1/q)})
\;.
\end{align*}
Therefore, both $A_\ell$ and $B_\ell$ in \eqref{eq:Aell}, \eqref{eq:last}
are bounded by
$C(N_\ell^{-1/\lambda}(h^t_{\ell-1} + s^{-(1/p_0-1/q)}_{\ell-1}))$
(with constant $C>0$ independent of $\{h_\ell\}_{\ell \geq 0}$, 
$\{ s_\ell \}_{\ell \geq 0}$ or $\{N_\ell\}_{\ell \geq 0}$).
Summing the preceding estimates over all discretization levels,
we arrive at the error bound \eqref{eq:SpltEstBd} for the splitting estimator.
\end{proof}
\subsubsection{Selection of truncation dimensions $\{s_\ell \}_{\ell \geq 0}$ and 
               sample numbers $\{ N_\ell \}_{\ell \geq 0}$}
\label{sec:QMCNell}
We assume in the following that assumption ${\rm {\bf ANPGC}}_{t'}$ holds for some $t'\geq 0$, 
and define $\tau=t+t'$.
The error analysis of the computable MLHoQMC Bayesian estimators 
in Sections \ref{sec:ErrAnRat} and \ref{sec:ErrAnSplit} lead to error bounds 
\eqref{eq:RatEstBd} and \eqref{eq:SpltEstBd}, respectively, which 
are of the generic form 
\begin{equation}\label{eq:ErrBdTau}
    h_L^\tau + s_L^{-\theta(1/p_0-1)} + N_0^{-1/p_0}
        + \sum_{\ell=1}^L
            N_\ell^{-1/\lambda} 
            ( h_{\ell-1}^\tau + s_{\ell-1}^{-\theta(1/p_0 - 1/q)} ),
\end{equation}
with $t'=0$, and which is analogous to the bounds proved for MLHoQMC algorithms
for forward UQ in \cite{DKGS14} under ${\rm {\bf ANPGC}}_{t'}$ with some $t' > 0$.
In \eqref{eq:ErrBdTau}, the maximal rate in $N_\ell$ is obtained for the choice
$\lambda=p_t$, implying $\alpha=\lfloor 1/p_t \rfloor + 1$ for the digit interlacing factor.
Moreover, to obtain the maximal rate for $s_{\ell-1}^{-\theta(1/p_0-1/q)}$ 
we choose $q$ as small as possible, which yields $q=p_t$ since $p_t\le q<1$.

\paragraph{Choice of truncation dimension.}
In a first step, we balance the dimension truncation error on level $L$
with the discretization error on level $L$,
and similarly for the increment levels $\ell=1,\ldots,L$.
We assume in the following the behavior
$h_\ell = 2^{-(\ell+\ell_0)}$ for $\ell=0,\ldots,L$ and $\ell_0\in\bbN_0$.
For the terms on level $L$, we have $s_L^{-\theta(1/p_0-1)} = \calO(h_L^\tau)$,
which motivates the choice $s_L = 2^{\frac{p_0 \tau (L+\ell_0)}{\theta(1-p_0)}}$.
Using $\frac{1}{p_t} = \frac{1}{p_0}-\frac{t}{d}$ 
(cp. \cite[Eqn. (1.12)]{DKGS14}), we have for the increments
$s_\ell^{-\theta(1/p_0-1/p_t)} = s_\ell^{-\theta t/d} = \calO(h_\ell^\tau)$,
which leads to $s_\ell = 2^{\tau d (\ell+\ell_0)/(\theta t)}$.
Since the discretization error on mesh level $L$ 
limits the accuracy of the entire computation,
increasing the parameter space dimension beyond $s_L$ is unnecessary;
we therefore choose the truncation dimensions 
\begin{equation}\label{eq:sl}
    s_\ell = \big\lceil\min(
        2^{\tau d (\ell+\ell_0)/(\theta t)},
        2^{\frac{p_0 \tau (L+\ell_0)}{\theta(1-p_0)}}
    )\big\rceil\;.
\end{equation}

\paragraph{Error and work models.}
We now use $h_{\ell-1}/h_\ell = 2$ (any sequence $\{ h_\ell \}_{\ell \geq 0}$
with uniformly bounded ratios would be admissible in the error analysis)
and the preceding equilibration of the dimension truncation
and FEM discretization errors to obtain an expression for the total error.
We replace $N_0^{-1/p_0}$ by $N_0^{-1/p_t}$, 
since for $p_0\le p_t$ holds $N_0^{-1/p_0} \le N_0^{-1/p_t}$ for $N_0\in\bbN$,
\begin{equation}\label{eq:Etot}
    E_{tot}
    =
    \calO\left( h_L^\tau + \sum_{\ell=0}^L N_\ell^{-1/p_t} h_\ell^\tau \right)
    .
\end{equation}
The total work (cost) is given by
\begin{equation}\label{eq:Wtot}
    W_{tot} = \cO\left( \sum_{\ell=0}^L N_\ell h_\ell^{-d} s_\ell \right)
    .
\end{equation}

\paragraph{Optimization of the number of samples.}
We now seek to minimize the error bound \eqref{eq:Etot} 
subject to a given, fixed work budget for \eqref{eq:Wtot}.
A minimum can be found using a Lagrange multiplier $\Lambda$.
To ease the ensuing calculations,
we assume all constants in the asymptotic error bounds equal $1$. 
Then, we consider the Lagrangian 
\begin{equation*}
    \cL(\{ N_\ell \}_{\ell = 0}^L, \Lambda)
    :=
    h_L^\tau + \sum_{\ell=0}^L N_\ell^{-1/p_t} h_\ell^\tau
    + \Lambda \sum_{\ell=0}^L N_\ell h_\ell^{-d} s_\ell
    \;.
\end{equation*}
Now, we consider the necessary condition that $\partial\cL/\partial N_\ell = 0$
for all $\ell=0,\ldots,L$.
For now, we consider $N_0$ to be a free parameter (to be determined),
and use this to find an expression for the multiplier $\Lambda$ 
from the following condition:
\begin{equation}\label{eq:Lmultiplier}
    \frac{\partial\cL}{\partial N_0}
    = - \frac{N_0^{-1/p_t-1}}{p_t}h_0^\tau + \Lambda h_0^{-d}s_0 = 0
    \quad\Rightarrow\quad
    \Lambda = \frac{h_0^{\tau+d} N_0^{-(1+p_t)/p_t}}{p_ts_0}
    .
\end{equation}
Note that the assumption $h_0=s_0=1$
can lead to a large increase in the sample numbers in the MC case,
if it is not fulfilled.
In the numerical experiments ahead,
this assumption does not hold,
since we use $h_\ell=2^{-(\ell+\ell_0)}$ with $\ell_0=1$.
  
Inserting \eqref{eq:Lmultiplier} into the remaining conditions 
for $\ell=1,\ldots,L$ yields
\begin{equation*}
    N_\ell \sim
    N_0
    \left(
        \frac{h_\ell^{\tau+d}s_0}{h_0^{\tau+d}s_\ell}
    \right)^{p_t/(1+p_t)}
    ,\quad \ell=1,\ldots,L
    \,.
\end{equation*}
Inserting this expression for $N_\ell$ into \eqref{eq:Etot},
we set $C=(h_0^{\tau+d}s_0^{-1})^{1/(1+p_t)}$ and write the total error as
\begin{equation*}
    E_{tot} =
    \cO( h_L^\tau + C N_0^{-1/p_t} E )
    ,\quad \;\mbox{with}\;\;
    E := \sum_{\ell=0}^L \big(s_\ell h_\ell^{\tau p_t-d}\big)^{1/(1+p_t)}
    ,
\end{equation*}
and determine $N_0$ by equilibrating the two contributions
$C N_0^{-1/p_t} E = \cO(h_L^\tau)$, 
which yields
\begin{equation*}
    N_0
    = \cO\big((h_L^\tau(EC)^{-1})^{-p_t}\big) = \cO(2^{(L-\ell_0)tp_t}(EC)^{p_t})\;.
\end{equation*}

Since we consider interlaced polynomial lattice rules with $N_\ell=b^{m_\ell}$ points,
we choose $m_\ell$ for $b=2$ as
\begin{equation}\label{eq:ml}
    m_\ell = 
    \begin{cases}
        \lceil p_t(\tau(L+\ell_0) + \log_2 E ) - \frac{p_t}{1+p_t}(\ell_0(\tau+d) + \log_2 s_0) \rceil & \ell=0\\
        \big\lceil m_0 - \frac{p_t}{1+p_t} (\ell(\tau+d)+\log_2 (s_\ell/s_0) ) \big\rceil & \ell=1,\ldots,L
    \end{cases}
    .
\end{equation}
\subsubsection{Multilevel Monte Carlo (MLMC) sampling}
\label{sec:MC}
An alternative to QMC evaluation of the ratio and splitting estimators
is to approximate the prior expectations $Z$ and $Z'$ by MC sampling.
In MC, the convergence rate is $O(N^{-1/2})$ (in mean square, however)
in terms of the number $N$ of MC samples, 
independent of the summability exponents $p_0, p_t$.
The MLMC variant of MC appears as a particular case of the above
algorithms; a general optimization of the level-dependent sample numbers 
$N_\ell$ yields the same truncation dimensions $s_\ell$ as in \eqref{eq:sl} 
above and the following choice of number of samples.
Note, however, that in MC sampling we do not simply set $p_0=p_t=2$,
since the truncation dimensions $s_\ell$ still depend on the value of 
the summability exponent $0<p_0<1$. 
We obtain with $E_{MC} := \sum_{\ell=0}^L (s_\ell h_\ell^{2\tau-d})^{1/3}$ 
and $C_{MC}=(h_0^{\tau+d} s_0^{-1})^{1/3}$
the sample numbers 
\begin{equation}\label{eq:NellMC}
    N_0 = 2^{2Lt} \lceil (C_{MC}E_{MC})^2 \rceil \;,
\quad 
    N_\ell = N_0 
\lceil
\left(h_\ell^{\tau+d} h_0^{-\tau-d} s_\ell^{-1} s_0\right)^{2/3}
     \rceil\;,\quad 
    \ell = 1,\ldots,L 
\;.
\end{equation}
%
\section{Model forward problems}
\label{sec:NumExp}
We present examples of forward problems and discretizations
which verify the abstract hypotheses of the foregoing error analysis.
In Section \ref{sec:VerHyp} we 
verify that properties ${\bf HCP}_t$, ${\bf PGStabC}$ and
${\bf ANPGC}_{t'}$ of Section \ref{sec:HolPG} hold for
rather general classes of linear, affine-parametric operator equations. 
Next, in Section \ref{sec:TestPrb}, we consider a concrete
linear elliptic problem in two space dimensions with a particular,
explicit selection of the uncertainty parametrization \eqref{eq:uviapsi}.
We remark that analogous hypotheses are valid for a host
of more general problems with high-dimensional uncertainty parametrization;
we mention only parametric systems of nonlinear ODEs \cite{HaSc11}, 
and PDE problems with domain uncertainty.
\subsection{Affine-parametric linear operator equations}
\label{sec:VerHyp}
We verify the abstract hypotheses of the QMC-PG 
error analysis in Section \ref{sec:HolPG} for the model problem \eqref{eq:PDE1} below; 
in doing so, rather than considering the particular problem \eqref{eq:PDE1},
we consider the more general linear, affine parametric operator equations as
considered in \cite{ScMCQMC12}.

For a regularity parameter $0 \leq t \leq \bar{t} \leq \infty$, 
let $\{ A_j \}_{j\geq 0} \subset \cL(\bcX_t,\bcY'_t)$ be a sequence
of bounded, linear operators which satisfy the following hypotheses.

\noindent
${\bf H1}_t$ 
$A_0$ is boundedly invertible, i.e.~$A_0^{-1} \in \cL(\bcY'_t, \bcX_t)$.

\noindent
${\bf H2}_t$ 
For $r = 0, 1,\ldots, t$, the sequences $\bsb_r = (b_{j,r})_{j\geq 1}$ of norms
$b_{j,r} := \max\{ \| A_0^{-1}A_j \|_{\cL(\bcX_r)},\| (A_0^*)^{-1}A_j^* \|_{\cL(\bcY_r)}\}$ 
satisfy $\bsb_r \in \ell^{p_r}(\IN)$
for summability exponents $0 < p_0 \leq p_1 \leq ... \leq p_t < 1$.

\noindent
${\bf H3}_t$ 
There holds $\| \bsb_t \|_{\ell^{p_t}(\IN)} < 1$. 

Under {\bf H1}  - {\bf H3}, we consider for parameter sequences 
$\bsy = (y_j)_{j\geq 1}\in U = [-1,1]^\IN$, and for
$\{ A_j \}_{j\geq 0} \subset \cL(\bcX,\bcY')$ 
the linear, affine-parametric operators $A(\bsy;q)$ given by 
\be\label{eq:Aaffparm} 
A(\bsy;q) = A(\bsy)q = A_0 q + \sum_{j\geq 1} y_j A_jq \;.
\ee
Evidently, then, the differential $D_q\cR$ of the
residual map $\cR(\bsy;q) := A(\bsy)q - f$ 
equals $A(\bsy)$, and the (uniform w.r.t. $\bsy\in U$)
bounded invertibility of $A(\bsy)$ implies 
a uniform (w.r.t. $\bsy$) inf-sup condition for $A(\bsy)$.
Hypotheses ${\bf H1}_t$ - ${\bf H3}_t$ imply, with a Neumann series argument, 
properties ${\rm {\bf HCP}_t}$ and ${\rm {\bf ANPGC}}_t$ in Section \ref{sec:HolPG} 
for the affine-parametric family \eqref{eq:Aaffparm} of linear operators.
Specifically, 
for every $f\in \bcY'_t$ and for every $\bsy\in U$ holds in $\bcX_t$ 
(with $B_j := A_0^{-1}A_j \in \cL(\bcX_t)$ for $j\geq 1$)
\be\label{eq:AaffInv}
q(\bsy) = (A(\bsy))^{-1}f = (I + \sum_{j\geq 1} y_j B_j)^{-1}A_0^{-1}f
\;.
\ee
Denoting by $\bcX_t$ and $\bcY_t$ ``complexifications'' of the function
spaces, the affine-parametric operator family $\{ A(\bsy) :\bsy \in U \}$
in \eqref{eq:Aaffparm} extends to the polydisc $\calD = \{ \bsz : |z_j|\leq 1 \}$,
and the Neumann series inversion formula \eqref{eq:AaffInv} 
remains valid for $\bsz = (z_j)_{j\geq 1} \in \calD$.
The extended family $A(\bsz)$ being affine-parametric is holomorphic
with respect to each variable $z_j$ and, by ${\bf H3}_t$, 
is also boundedly invertible in $\cL(\bcX_t,\bcY_t')$ 
for all $\bsz \in \calD_{\bsrho_t}$,
provided the poly-radius $\bsrho_t = (\rho_{t,j})_{j\geq 1}$ 
is $(\bsb_t,\eps)$-admissible, i.e.~
provided that \eqref{eq:rho_b} holds for some $\eps > 0$.
Since the mapping $A\mapsto A^{-1}$ is analytic at any $A\in \cL_{iso}(\bcX,\bcY')$,
\eqref{eq:AaffInv} defines a parametric solution family $\{q(\bsz): \bsz \in \calD_{\bsrho_t} \}$, 
which, for $\bsz \in \calD_{\bsrho_t}$ and some $\eps > 0$, is $(\bsb_t,p_t,\eps)$-holomorphic
with holomorphy domains $\cO_{\rho_j}$ in Definition \ref{def:peanalytic} being the 
polydiscs $\calD_{\rho_j}$. For every $(\bsb_t,\eps)$-admissible $\bsrho_t$
there holds the \emph{holomorphic continuability of the parametric solutions}
${\bf HCP}_{t}$ for some $t>0$, so that
\be\label{eq:CplxReg}
\sup_{\bsz \in \calD_{\bsrho_t}} \| q(\bsz) \|_{\bcX_t} 
\leq C(\bsb_t,\eps) \| f \|_{\bcY'_t} 
\;.
\ee
As for every $\kappa > 1$, 
$\calD_\kappa \supset \cT_\kappa$, Proposition \ref{prop:main1}
is applicable to $g(\bsy) := \phi(q(\bsy))$ for any QoI $\phi(\cdot)\in \cL(\bcX_{t'},\cZ)$.

The Neumann series argument used to verify \eqref{eq:CplxReg} 
also implies condition {\bf PGStabC} 
(Stability and Quasioptimality of PG for the complex parametric extension). 
To see this, we assume at hand a nested sequence 
$\{ (\bcX^\ell,\bcY^\ell) \}_{\ell \geq 0}$
of (dense in $\bcX\times \bcY$) finite-dimensional PG trial- and test function spaces,
which satisfy the \emph{nominal inf-sup condition}: 
there exists $\mu_0 > 0$
such that
\be\label{eq:NomInfSup}
\inf_{0\ne w \in \bcX^\ell} \sup_{0\ne v \in \bcY^\ell}
\frac{|  _{\bcY'}\langle A_0 w , v \rangle_{\bcY} |} {\| w \|_{\bcX} \| v \|_{\bcY}} 
\geq \mu_0
\;,
\quad 
\inf_{0\ne v \in \bcY^\ell} \sup_{0\ne w \in \bcX^\ell}
\frac{| _{\bcY'}\langle A_0 w , v \rangle_{\bcY}| } {\| w \|_{\bcX} \| v \|_{\bcY}} \geq \mu_0
\;.
\ee
With \eqref{eq:NomInfSup}, assumption ${\bf H3}_0$ implies 
{\bf PGStabC}, i.e.
Stability and Quasioptimality of the PG discretizations for the complex parametric
extension to $\calD_{\bsrho_0}$ for any 
$(\bsb_0,\eps)$-admissible polyradius $\bsrho_0$:
for such $\bsz \in \calD_{\bsrho_0}$ holds
$$
\begin{array}{rcl}
|_{\bcY'}\langle A(\bsz) w , v \rangle_{\bcY}| 
& = & \displaystyle
\left|
_{\bcY'}\left \langle \left( A_0 + \sum_{j\geq 1} z_j A_j \right) w , v \right\rangle_{\bcY}
\right|
\geq 
\left|
_{\bcY'}\left \langle A_0 w , v \right\rangle_{\bcY}
\right|
- 
\left|
_{\bcY'}\left \langle \left( \sum_{j\geq 1} z_j A_j \right) w , v \right\rangle_{\bcY}
\right|,
\end{array}
$$
which implies with \eqref{eq:NomInfSup} and with $|z_j|\leq \rho_j$ that
\be\label{eq:ParInfSup1}
\forall \bsz \in \calD_\bsrho:\;\;
\inf_{0\ne v \in \bcY^\ell} \sup_{0\ne w \in \bcX^\ell}
\frac{| _{\bcY'}\langle A(\bsz) w , v \rangle_{\bcY}|} {\| w \|_{\bcX} \| v \|_{\bcY}} 
\geq \mu_0 (1 - \sum_{j\geq 1} \mu_0^{-1} \rho_j \| A_j \|_{\cL(\bcX,\bcY')})\;. 
\ee
Assume that $\bsrho$ is $(\bsbeta,\eps)$-admissible 
for the sequence $\bsbeta = (\beta_j)_{j\geq 1}$
given by $\beta_j := \mu_0^{-1} \| A_j \|_{\cL(\bcX,\bcY')}$ for
$j\geq 1$, and for sufficiently small $\eps > 0$.
Hypothesis ${\bf H1}_0$, i.e.~that $A_0 \in \cL(\cX,\cY')$ is an isomorphism,
implies 
$b_j = \| A_0^{-1} A_j \|_{\cL(\cX)} \simeq \|A_j\|_{\cL(\cX,\cY')} \simeq \beta_j$.
By {\bf H2} and {\bf H3}, 
$\bsb\in \ell^{p_0}(\IN)$ implies with {\bf H1} that $\bsbeta\in \ell^{p_0}(\IN)$. 
The $(\bsbeta,\eps)$-admissibility of $\bsrho$ and ${\bf H3}_t$ with $t=0$ imply
$$
1 - \sum_{j\geq 1} \mu_0^{-1} \rho_j \| A_j \|_{\cL(\bcX,\bcY')}
= 
1- \sum_{j\geq 1} \beta_j - \sum_{j\geq 1} (\rho_j-1)\beta_j
\geq 
1 - \eps - \sum_{j\geq 1} \beta_j 
> 0, 
$$
which implies in \eqref{eq:ParInfSup1} 
the uniform (w.r.t. $\bsz \in \calD_\bsrho$)
parametric discrete inf-sup conditions
\be\label{eq:ParInfSup}
\inf_{0\ne v \in \bcY^\ell} \sup_{0\ne w \in \bcX^\ell}
\frac{| _{\bcY'}\langle A(\bsz) w , v \rangle_{\bcY}| } {\| w \|_{\bcX} \| v \|_{\bcY}}
\geq \mu,
\;\;
\inf_{0\ne w \in \bcX^\ell} \sup_{0\ne v \in \bcY^\ell}
\frac{| _{\bcY'}\langle A(\bsz) w , v \rangle_{\bcY}| } {\| w \|_{\bcX} \| v \|_{\bcY}}
\geq \mu,
\ee
with $\mu = \mu_0 (1 - \eps - \sum_{j\geq 1} \beta_j) > 0$ independent of $\ell$.
The uniform parametric inf-sup conditions \eqref{eq:ParInfSup} imply 
uniform w.r.t. $\bsz\in\calD_\bsrho$ stability of the PG discretization
and the existence, uniqueness and the quasioptimality 
of the complex-parametric PG solution $q^{(\ell)}(\bsz) \in \bcX^\ell$,
for every $\bsz \in \calD_\bsrho$ with $(\bsbeta,\eps)$-admissible $\bsrho$.
It also implies the (uniform w.r.t. the discretization level $\ell$) 
$(\bsbeta,p_0,\eps)$-holomorphy of the PG solutions $q^{(\ell)}(\bsz)$.
The quasioptimality and the uniform parametric regularity \eqref{eq:CplxReg} imply,
with the approximation property \eqref{eq:apprprop}, the 
(uniform w.r.t. $\bsz$) asymptotic error bounds
\be\label{eq:ErrBdC}
\sup_{\bsz \in \calD_\bsrho} \| q(\bsz) - q^{(\ell)}(\bsz) \|_{\bcX} 
\leq C h_\ell^t \| f \|_{\bcY'_t} \;,
\ee
where the constant $C>0$ is independent of $\ell$, $f$ 
and of $\bsz\in \calD_\bsrho$, 
but depends on $\bsbeta$ and $\eps$ in the $(\bsbeta,\eps)$-admissibility of 
the poly-radius $\bsrho$. Choosing $\bsz = (z_1, ..., z_{s_\ell},0,...)$,
\eqref{eq:ErrBdC} implies \eqref{eq:PGerr}.
For the linear, affine-parametric operators 
\eqref{eq:Aaffparm}, property ${\rm {\bf ANPGC}}_{t'}$ 
for some $0\leq t' \leq \bar{t}$ was shown in \cite{DKGS14},
for the affine-parametric, linear operator family \eqref{eq:Aaffparm}.
\subsection{Affine-parametric linear test problem}
\label{sec:TestPrb}
We test the proposed ML algorithms for a
model parametric, linear diffusion problem which was
already considered in \cite{DKGS14}.

For a parameter $\bsy\in U=[-\frac12,\frac12]^\bbN$, 
in the bounded spatial domain $D\subset \mathbb{R}^d$,
we consider the second order, linear and affine-parametric 
elliptic PDE
\begin{align} \label{eq:PDE1}
 A(u;q) = &-\nabla \cdot \left(u(\bsy)\nabla q(\bsy)\right) \,=\, f\;, \quad
  q(\bsy)|_{\partial D} = 0\;,\quad
  u(\bsy) \,=\, u_0(\cdot) + \sum_{j\ge 1} y_j\,\psi_j(\cdot)\;.
\end{align}
This is a particular (linear) forward problem \eqref{eq:NonOpEqn}.
The usual (symmetric) primal variational formulation of \eqref{eq:PDE1} 
is based on $\cX=\cY=H^1_0(D)$.
In the particular case that $D=(0,1)^d$,
we parametrize the uncertain diffusion coefficient $u$ with the 
explicit, separable \KL basis
\begin{equation}\label{eq:sinuseig}
 \lambda_{\bsk} \,=\, \pi^2 (k_1^2+\cdots+k_d^2),\;\;
 \tilde{\psi}_{\bsk}(\bsx) \,=\, \prod_{i=1}^d \sin(\pi k_i x_i)
\;.
\end{equation}
Enumerating $\{\lambda_{\bsk}\}_{\bsk\in \bbN^d}$ in non-decreasing order
$\{\lambda_j\}_{j\ge 1}$, there holds
\begin{equation}\label{eq:Weyl}
\lambda_j \sim j^{2/d} \quad\mbox{as}\quad j\to\infty \;.
\end{equation}
We consider in the ensuing numerical experiments 
the case $d=2$, $D=(0,1)^2$, and
\begin{align}\label{eq:coeff}
 u(\bsy)(\bsx)
 &\,=\, u_0(\bsx) + \sum_{k_1, k_2 = 1}^\infty y_{k_1, k_2}\, \frac{1}{(k_1^2+k_2^2)^2} 
\,
 \sin( k_1 \pi x_1)\, \sin(k_2 \pi x_2) \nonumber \\
 &\,=\, u_0(\bsx) + \sum_{j=1}^\infty y_j\, \mu_j\, \sin (k_{1,j} \,\pi x_1)\, \sin(k_{2,j} \,\pi x_2) \;.
\end{align}
We enumerate the sequence of pairs 
$ \left( (k_{1,j}, k_{2,j}) \right)_{j \in \mathbb{N} }$ 
such that $k_{1,j}^2+k_{2,j}^2 \le k_{1,j+1}^2 + k_{2,j+1}^2$
for all $j \in \mathbb{N}$ 
(with ties due to equality broken in an arbitrary manner). 
This ordering yields 
$\mu_j = (k_{1,j}^2 + k_{2,j}^2)^{-2} \asymp \lambda_j^{-2} \sim j^{-2}$ (cf. \eqref{eq:Weyl}). 
We take $u_0(\bsx) \equiv 1/2$. 
In \eqref{eq:PDE1}, we use the forcing term $f(\bsx) = 100x_1$, and
consider the quantity of interest to be the integral of
the parametric solution $u(\bsy)$ over the subdomain 
$\bar{D}=(0.5,1)^2\subset D$, i.e.,
$G(q(\bsy)) = \int_{\bar{D}} q(\bsy)(\bsx) \,\rd\bsx$.
This affine-parametric forward problem fits into the abstract MLHoQMC framework 
considered in \cite{DKGS14} with symmetric, 
affine-parametric bilinear form $\fa(\bsy;\cdot,\cdot)$,
and with $\bcX = \bcY = H^1_0(D)$, and with
\begin{equation}\label{eq:ParModPrb}
  d = 2, \quad 
  \theta = 2, \quad 
  t=t'=1, \quad
  \tau = t + t' = 2, \quad 
  \mbox{and any} \quad 
  \frac{1}{2} < p_0 \leq 1 \;.
\end{equation}
As explained in \cite{DKGS14} this implies 
the summability exponent $p_1 = p_0/(1-p_0/2) > 2/3$.
The regularity spaces $\cX_t$ in the convergence rate estimate
\eqref{eq:convrate} are $\bcX_1 = (H^1_0\cap H^2)(D)$ and $\bcY'_1 = L^2(D)$.

For Bayesian inversion,
we consider as observation functional the integral over $\hat{D}=(0,0.5)^2$.
This scalar value is perturbed by a realization of a normally distributed
random variable $\eta\sim\mathcal{N}(0,\Gamma)$ as in \eqref{eq:DatDelta}
to generate a measurement, which is fixed for each value of $\Gamma$ before
applying the various SL and ML HoQMC methods.
For prior mean approximations,
we compare the multilevel QMC method to the single-level QMC approach
and to the multilevel Monte Carlo method.
For approximations to the posterior expectation,
we compare the performance of the analyzed
multilevel QMC estimators \eqref{eq:QLratio}, \eqref{eq:QLsplit}
with both the
single-level QMC ratio estimator of \cite{DGLGCSBIPSL}
as well as the two multilevel estimators \eqref{eq:QLratio}, \eqref{eq:QLsplit}
combined with standard Monte Carlo sampling.
In all considered algorithms, we solve \eqref{eq:PDE1} by 
a Galerkin approximation based on continuous, piecewise
linear finite elements on a family of uniform quadrilateral meshes 
with mesh width
$h_\ell= h_0 2^{-\ell}$ for $\ell = 0,1,2\ldots$, 
and we use interlaced polynomial lattice rules with $N=2^m$ points, 
$m\in\bbN$, constructed by
the fast CBC algorithm for SPOD weights from \cite{DKGNS13}.

In the single-level HoQMC ratio estimator,
the meshwidth is $h = h_L = h_02^{-L}$.
The PG discretization error for regular functionals is, asymptotically,
as $h\to 0$, $\calO(h^2)$. 
We balance this $\calO(h^2)$ discretization error with the 
dimension truncation error of $\calO(s^{-2})$ and the 
HoQMC quadrature error of $\calO(N^{-2})$.
These asymptotic error bounds yield the choices 
$s=h^{-1}=2^{L+1}$ and $N=h^{-1}$, 
so that $m=\log_2(h^{-1}) \simeq L+1$. 
Ignoring logarithmic factors,
this yields a combined error of $\calO(h^2) = \calO(\varepsilon)$ 
and overall cost of $\calO(Nh^{-2}s) = \calO(h^{-4})= \cO( \varepsilon^{-2})$,
with the constants implied in $\calO(\cdot)$ being independent of 
$\{s_\ell\}_{\ell \geq 0}$.

The HoQMC rules will be based on SPOD weights from \cite{KSS12}.
A major finding of the single-level theory in \cite{DGLGCSBIPSL} and 
of the multilevel error analysis in the present paper
is that \emph{HoQMC rules which are efficient for forward UQ
will perform equally well for the corresponding Bayesian Inverse UQ},
due to preservation of holomorphy domains.
We therefore use, in the affine parametric forward problem,
the HoQMC weights derived in \cite{DKGNS13}. 
They are given by \cite[Equation (3.32) with (3.17)]{DKGNS13} and 
\[
\mbox{
interlacing factor $\alpha = \lfloor 1/p_0 \rfloor + 1  = 2$, 
and}\;
\beta_j = \beta_{0,j} = \lambda_j = \frac{1}{(k_{1,j}^2 + k_{2,j}^2)^2} 
\;.
\]
The generating vectors were computed by
the fast CBC construction from \cite{DKGNS13} 
with Walsh constant $C=0.1$.
(computations with 
$C=1$ yielded different generating vectors,
and led to slight artefacts on high levels $L\ge7$ in this example). 
For the presently used base $b=2$, the choice $C=1.0$ holds \cite{Yoshiki15}.

In the multilevel algorithm $Q^\ast_L$ in 
\eqref{eq:MLratioZ'}, \eqref{eq:MLratioZ},
for given maximal discretization level $L$,
we take regular bisection refinement of the quadrilateral
mesh in $D$, resulting in a sequence of regular, quadrilateral meshes
of $D$ with meshwidths $h_\ell = h_0 2^{-\ell}$ for $\ell=0,1,\ldots,L$.
We 
select the truncation dimension as 
$s_\ell = \min(2^{2\ell},2^L)$ as in \eqref{eq:sl},
and $m_\ell$ as in \eqref{eq:ml}, 
where 
for this particular case $N_\ell$ is given in Table~\ref{tab:Nl}.
\begin{table}[h]
    \centering
    \begin{tabular}{lll}
        $\boldsymbol{L}$  &  \textbf{MLQMC}  &  \textbf{MLMC} \\ \toprule
         0   &   (1)  &   (1) \\
         1   &   (3,1)  &   (5,2) \\
         2   &   (5,3,1)  &   (10,7,4) \\
         3   &   (7,5,3,1)  &   (15,11,8,5) \\
         4   &   (9,7,5,3,1)  &   (19,15,12,9,7) \\
         5   &   (11,9,6,5,3,2)  &   (24,20,16,13,10,8) \\
         6   &   (13,11,8,6,5,3,2)  &   (28,24,20,17,14,11,9) \\
         7   &   (15,13,10,8,6,5,3,2)  &   -- \\
         8   &   (17,15,12,10,8,6,5,3,2)  &   -- \\
        \bottomrule
    \end{tabular}
    \caption{
        Logarithm of the number of samples per level $m_\ell = \log_2 N_\ell$
        for the multilevel methods used in the results below,
        i.e.~$N_\ell=2^{m_\ell}$ with $m_\ell$ given by \eqref{eq:ml} for QMC
        for both ratio and splitting estimators;
        for MLMC, we show $m^{MC}_\ell = \lfloor\log_2 N_\ell\rfloor$ with $N_\ell$ from \eqref{eq:NellMC}.
        Note that for MLMC we use $N_\ell$ directly,
        $m^{MC}_\ell$ is specified for ease of comparison.
    }
    \label{tab:Nl}
\end{table}
%
Using formally the limiting values $p_0=1/2$ and $p_1 = 2/3$,
the total error is $\calO(h_L^2) = \calO(\varepsilon)$ at cost of 
$\calO\left(\sum_{\ell=0}^L N_\ell h_\ell^{-2} s_\ell \right) 
 = \calO(\varepsilon^{-3/2})$, ignoring logarithmic factors. 
For $\ell=0$, we use the SPOD weights from the single-level 
case with $\beta_{0,j}$ from above. For $\ell>0$, 
the SPOD weights that enter the fast CBC construction are different from those
for the single-level algorithm; as indicated above, we use the choices
derived for this problem in \cite{DKGNS13} for the affine-parametric forward
problem also for the computation of integrals in the inverse problem.
We take base $b=2$, Walsh constant $C_{\alpha,b}=0.1$, 
and
\[
  \mbox{
  digit interlacing factor $\alpha = \lfloor 1/p_1 \rfloor + 1  =  2$, and}\;
  \beta_j = \beta_{1,j} = \lambda_j\,\pi\,\max(k_{1,j}, k_{2,j})
\;.
\]
\section{Implementation and numerical results}\label{sec:results}
We now present numerical experiments which validate
the choices of the algorithm steering parameters from Section \ref{sec:QMCNell}
for their implementation, for the fast, \emph{deterministic} solution
of Bayesian inverse problems for PDEs with uncertain random field inputs.
We present in particular experiments for 
the model linear, parametric elliptic forward problems from Section \ref{sec:NumExp}.
The problems are set in the domain $D=(0,1)^2$,
with homogeneous Dirichlet boundary conditions, and 
with parametric coefficients given by an $s$-term truncated
\KL expansion in $D$, with exactly known eigenfunctions.
The first problem has an affine-parametric coefficient,
the second problem is a nonlinear (holomorphic) transformation of this coefficient. 
The purpose of the ensuing numerical experiments
is to illustrate the preceding convergence analysis and to
show that the proposed MLHoQMC algorithms outperform
other methods, such as MLMC, in terms of error vs.~work.
For the implementation, we use the \texttt{gMLQMC} library from \cite{GaPASC16}.

We compute the forward solution up to maximal discretization level $L=8$,
yielding $s=512$ active dimensions.
No exact solution for this problem is available, so we 
verify convergence rates by testing accuracy with respect to
a numerically computed reference solution.
This reference solution was computed on level $L=9$ with
maximal truncation dimension $s=1024$
and the MLQMC method.
For the posterior expectation, the splitting estimator was used.
In the error vs.~work plot in the figures ahead,
we used the work measures
\begin{equation}\label{eq:WrkMeas}
W_{\rm SL} := h_L^{-2} s N\;, 
\quad \mbox{and} \quad 
W_{\rm ML} := \sum_{\ell=0}^L N_\ell h_\ell^{-2} s_\ell\;.
\end{equation}
The MLMC runs which are provided here for comparison purposes
were performed with identical discretizations of the forward problems,
and with the optimized MC sample numbers \eqref{eq:NellMC}; in order
to reduce the (inherent in MC sampling) scatter in the convergence
rate plots, in the ensuing graphs the MLMC convergence was obtained
by averaging $5$ MLMC runs. We emphasize that the presently considered
MLHoQMC are entirely deterministic: for the MLHoQMC algorithms,
the computation of each convergence plot required only one single run. 
%
\subsection{Affine parametric, linear elliptic test problem}
\label{sec:AffTstPbm}
In the results below, the work measures \eqref{eq:WrkMeas} were used for the multilevel methods.
The expected convergence rate is $-1/2$ for the single-level algorithm 
and $-2/3$ for the multilevel algorithm (for both forward and inverse approximations),
both of which are confirmed by the results.
In the MLMC runs, $R=5$ repetitions were used to average out sampling
noise in the estimated $L^2$ error.
The MLQMC (splitting estimator) result with finest discretization level $L=9$
was used as a reference in the error computation.
\begin{figure}[H]
    \centering
    \includegraphics[width=0.7\textwidth]{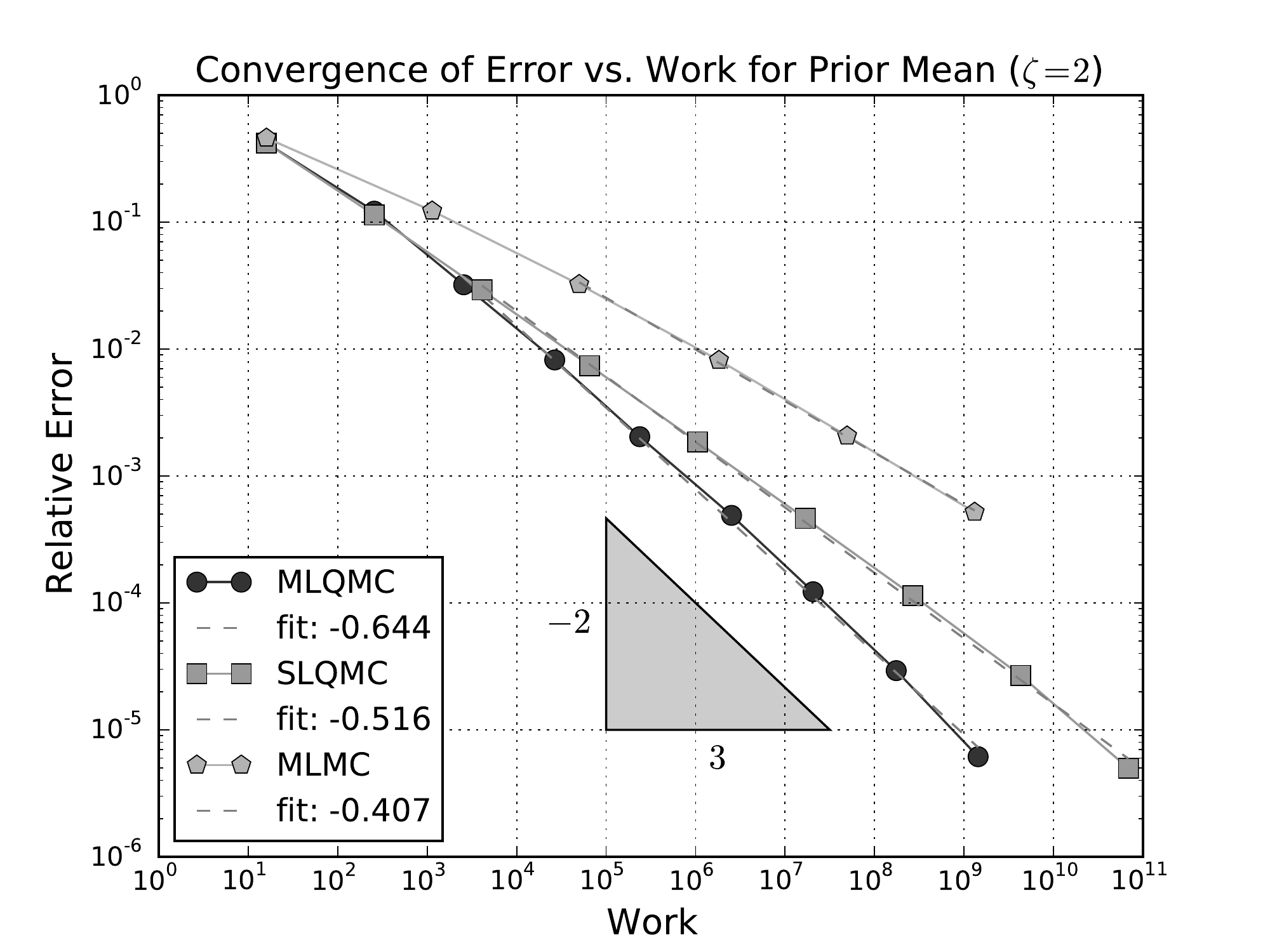}
    \caption{
        Convergence of forward UQ estimates for the affine-parametric
        2d diffusion equation model.
        The slope of the lines was estimated by a least-squares fit omitting the first three points.
    }
    \label{fig:affine_FW_G1}
\end{figure}
\begin{figure}[H]
    \centering
    \includegraphics[width=0.7\textwidth]{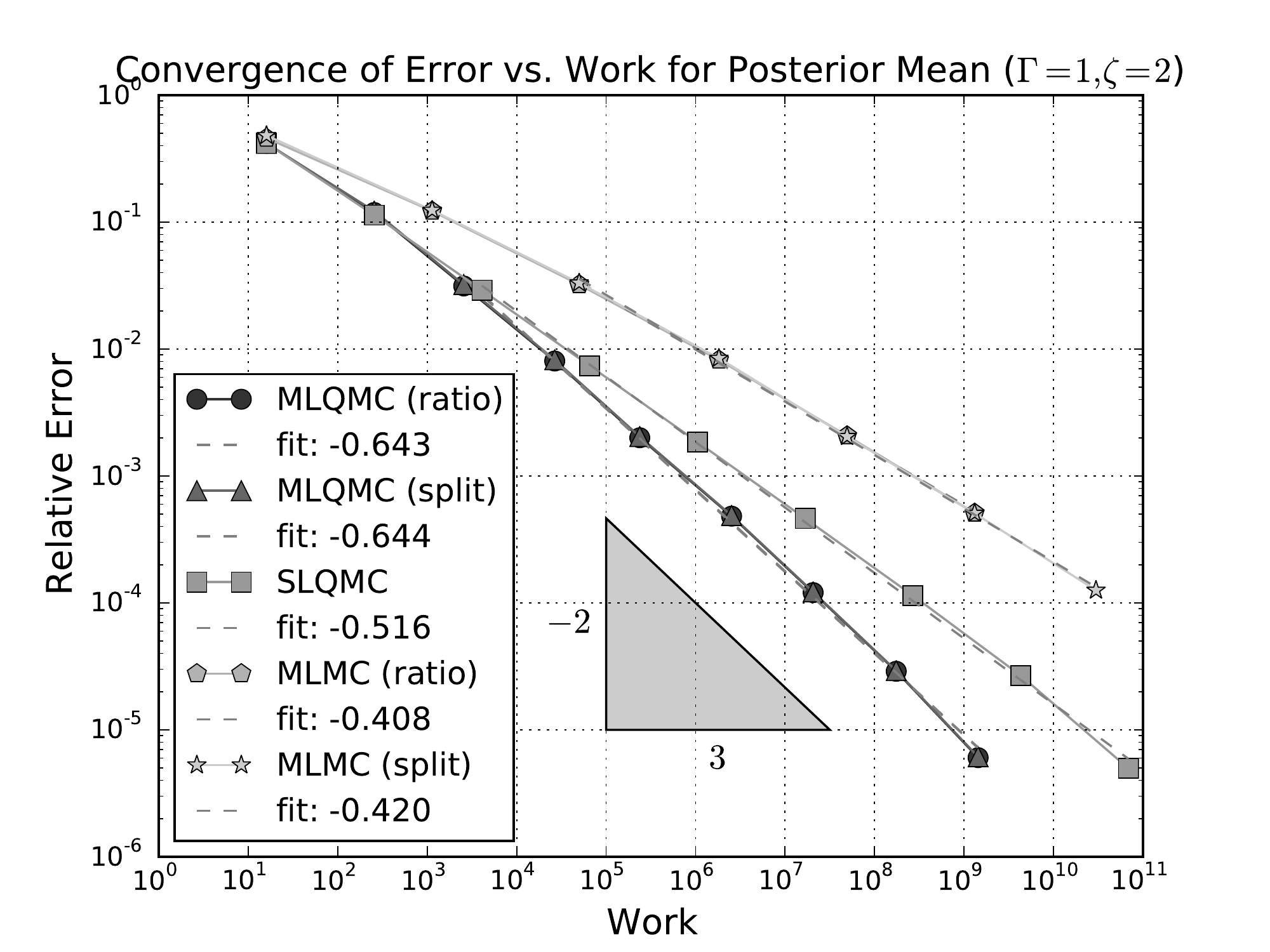}
    \caption{
        Convergence of estimates to the Bayesian inverse problem 
        for the affine-parametric 2d diffusion equation model, with covariance $\Gamma=1$.
        The slope of the lines was estimated by a least-squares fit omitting the first three points.
    }
    \label{fig:affine_BI_G1}
\end{figure}
\begin{figure}[H]
    \centering
    \includegraphics[width=0.7\textwidth]{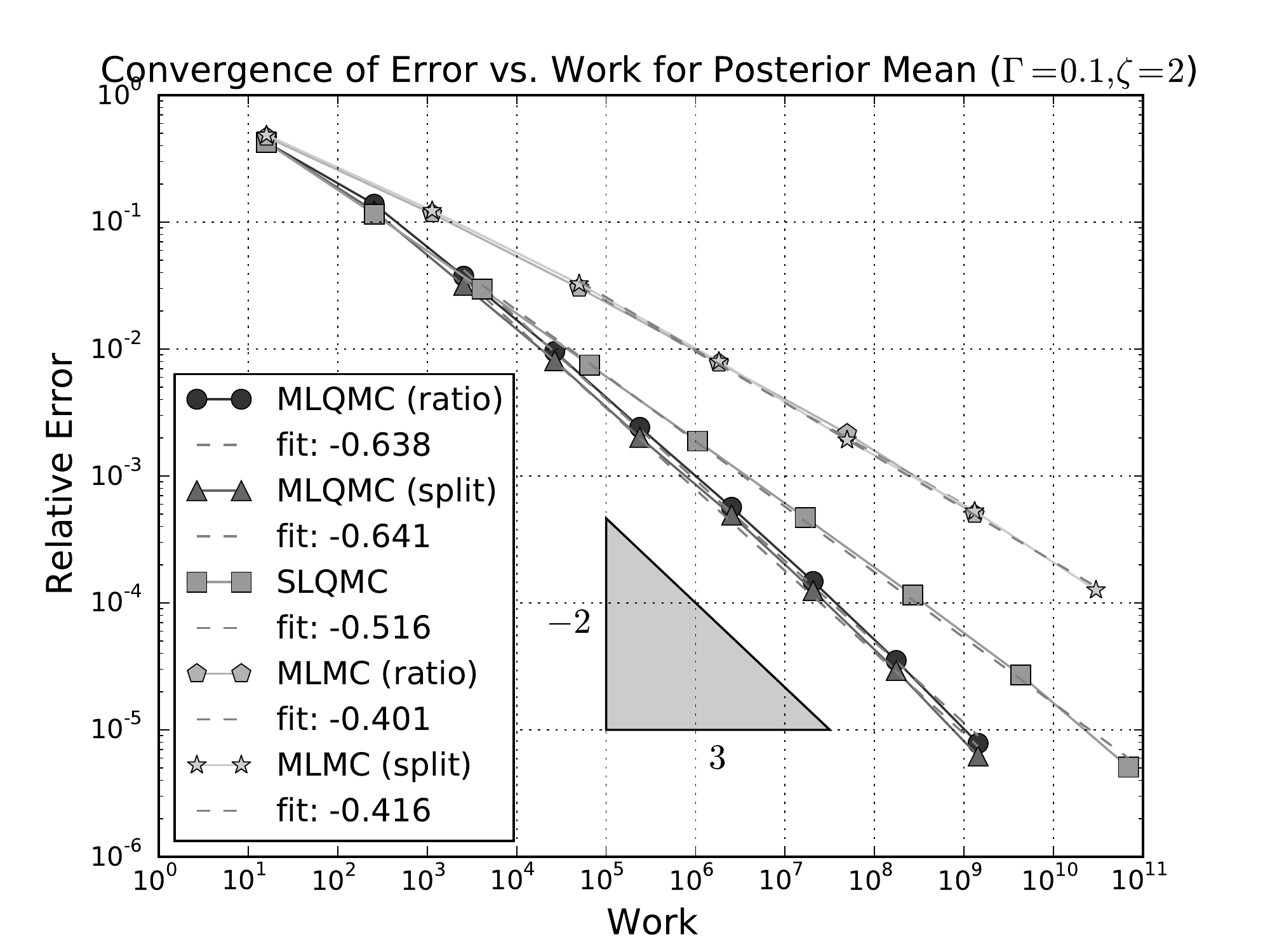}
    \caption{
        Convergence of estimates to the Bayesian inverse problem 
        for the affine-parametric 2d diffusion equation model, 
        with covariance $\Gamma=0.1$.
        The slope of the lines was estimated by a least-squares fit omitting the first three points.
    }
    \label{fig:affine_BI_G01}
\end{figure}
\begin{figure}[H]
    \centering
    \includegraphics[width=0.7\textwidth]{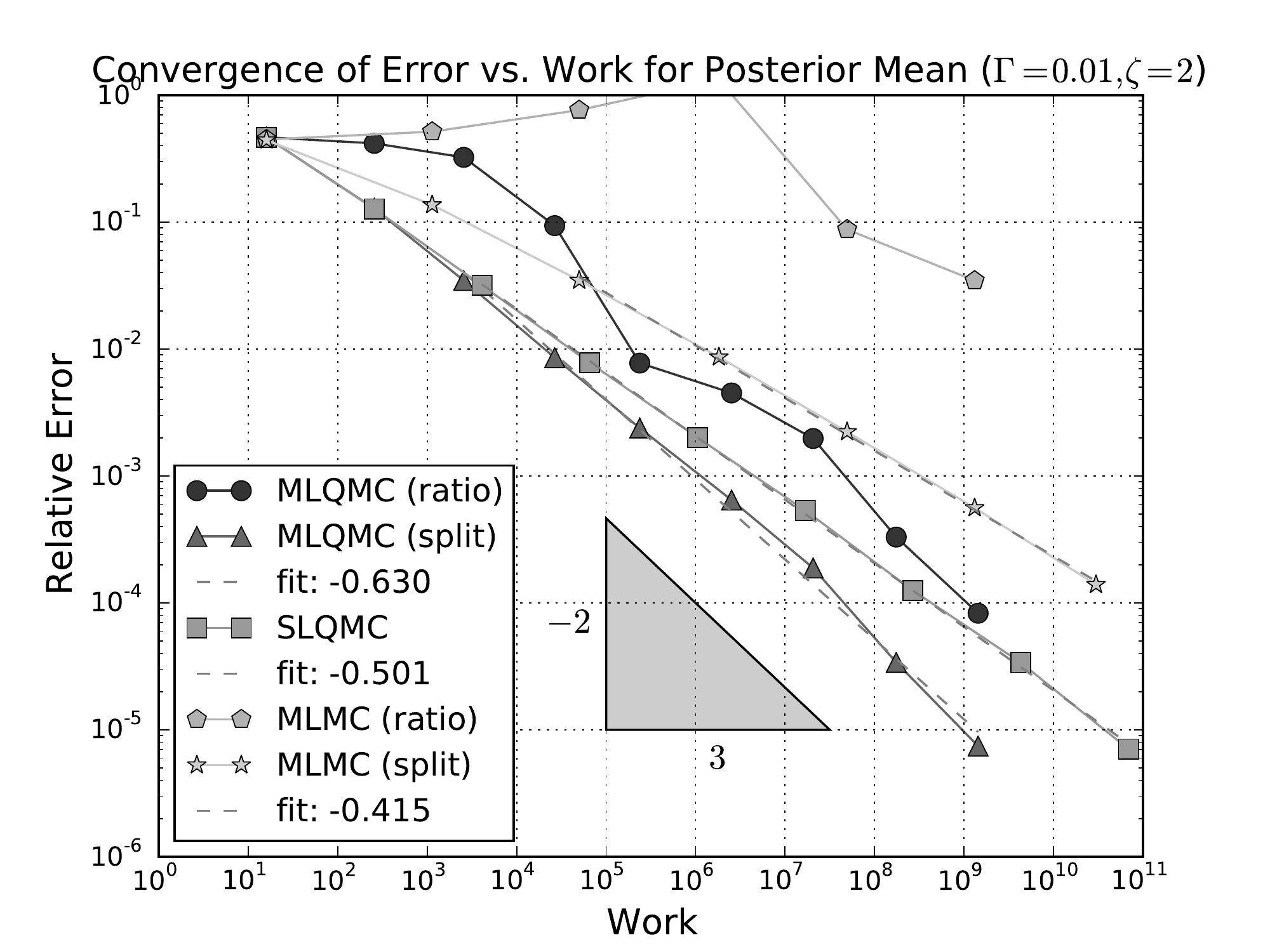}
    \caption{
        Convergence of estimates to the Bayesian inverse problem 
        for the affine-parametric 2d diffusion equation model, 
        with covariance $\Gamma=0.01$.
        The slope of the lines was estimated by a least-squares 
        fit omitting the first three points.
        In this case, the MLQMC splitting estimator was used as a reference.
        In the small noise case,
        the convergence of the ratio estimator is negatively impacted, 
        both for MLMC and for MLQMC, due to finite precision effects.
        The splitting estimator seems to be more resilient to rounding instabilities
        due to small $\Gamma$, as indicated in Remark \ref{rmk:Splt}.
    }
    \label{fig:affine_BI_G001}
\end{figure}

\subsection{Nonaffine-parametric, linear test problem}
We consider once more the linear operator equation 
\eqref{eq:PDE1}, however, now with diffusion coefficient modelled for
$y_j\in[-\frac12,\frac12]$ by the nonlinear expression
\begin{equation} \label{eq:Parmuexp}
    u(\cdot, \bsy) := \exp\left(\sum_{j\ge1} y_j \psi_j(\cdot) \right)\; ,
\end{equation}
i.e.~simply the exponential of the affine-parametric
coefficient model from \eqref{eq:PDE1} with $u_0(\cdot)=0$,
yielding for the new model the nominal value $u(\bszero) \equiv 1$.
When considering the exact same QMC parameters as for the above results ($\alpha,C,\beta_j$)
and maximal parameter dimension in the ML experiments being $s=1024$,
we observe the results in Figures~\ref{fig:MLFWnoaff_G1} to \ref{fig:MLBIPnoaff_G001}.

\begin{figure}[h]
    \centering
    \includegraphics[width=0.8\textwidth]{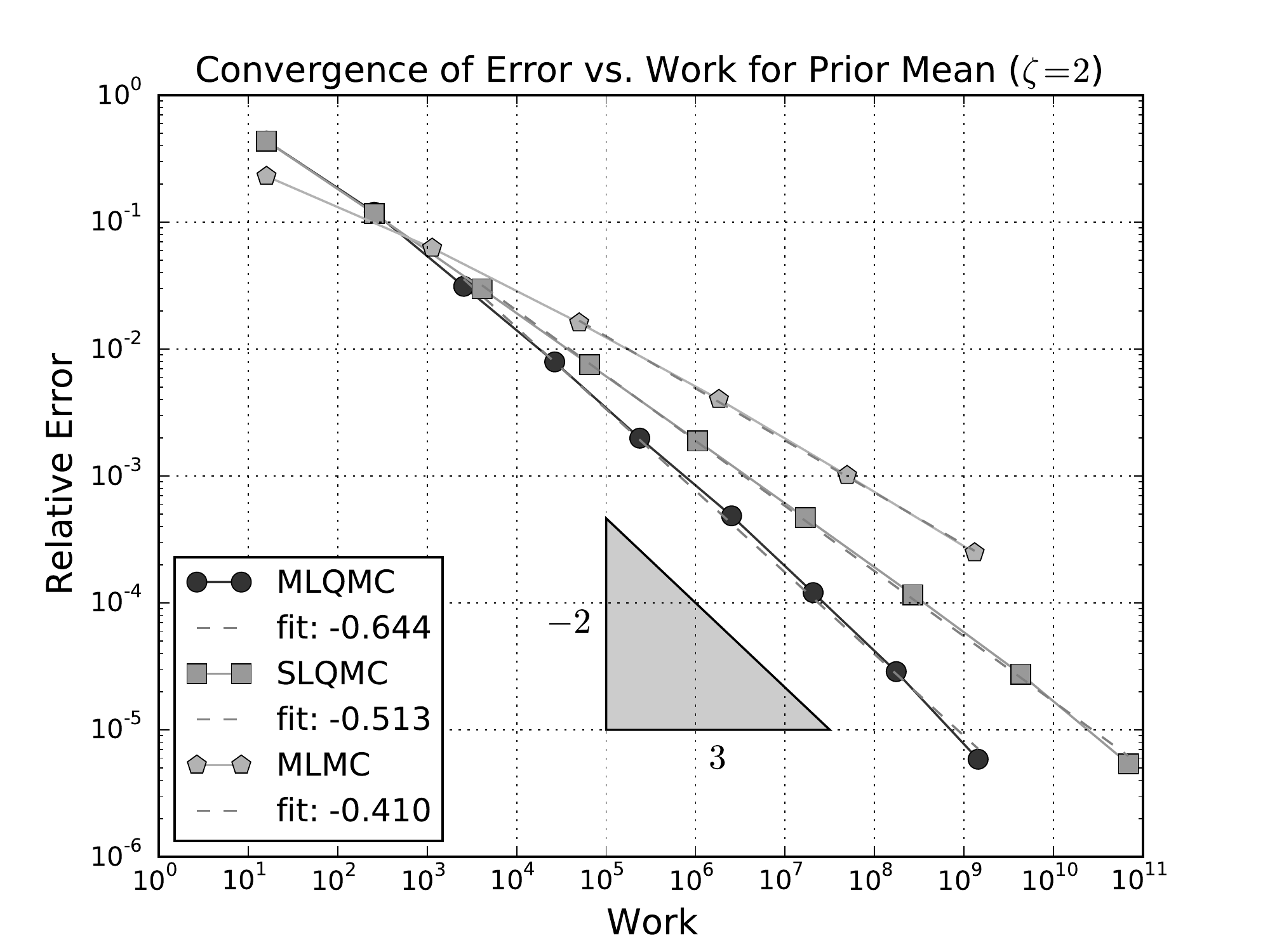}
    \caption{
        Convergence of forward UQ estimates for the nonaffine-parametric
        2d diffusion equation model with coefficient \eqref{eq:Parmuexp}.
        The slope of the lines was estimated by a least-squares fit omitting the first three points.
    }
    \label{fig:MLFWnoaff_G1}
\end{figure}
\begin{figure}[h]
    \centering
    \includegraphics[width=0.8\textwidth]{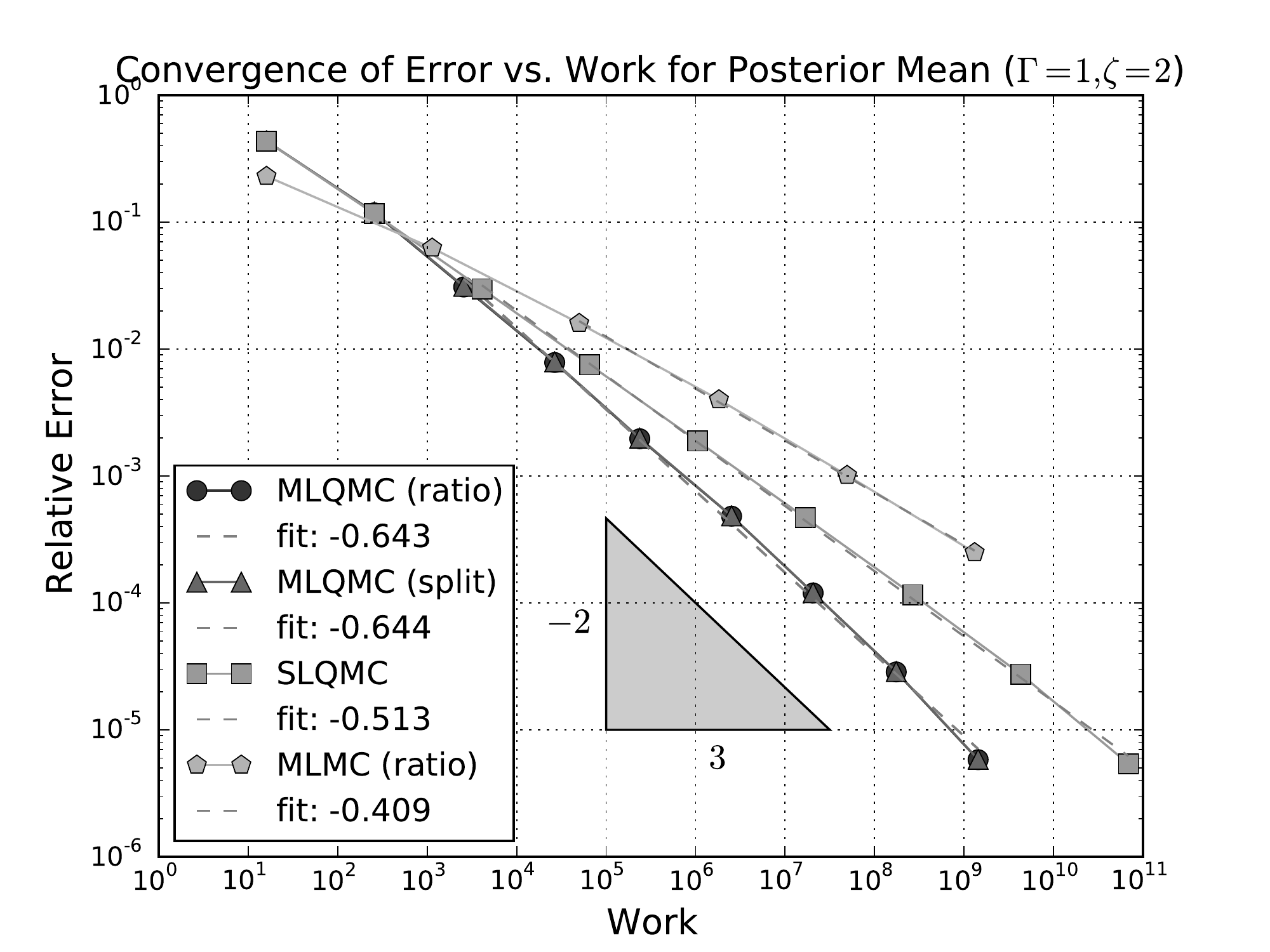}
    \caption{
        Convergence of estimates to the Bayesian inverse problem for the
        2d diffusion equation model with coefficient \eqref{eq:Parmuexp}, with $\Gamma=1$.
        The slope of the lines was estimated by a least-squares fit omitting the first three points.
    }
    \label{fig:MLBIPnoaff_G1}
\end{figure}
\begin{figure}[h]
    \centering
    \includegraphics[width=0.8\textwidth]{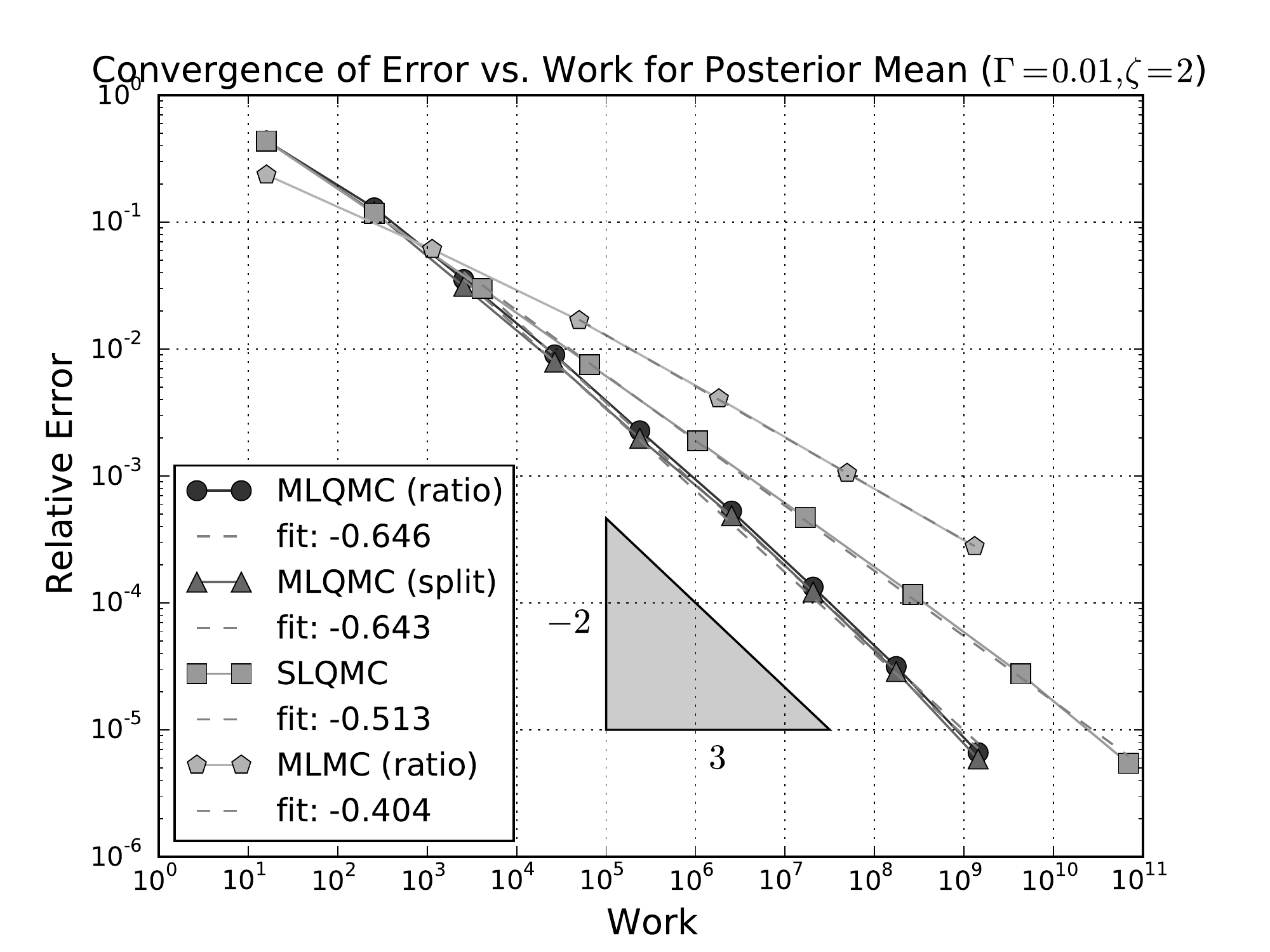}
    \caption{
        Convergence of estimates to the Bayesian inverse problem for the
        2d diffusion equation model with ``log-affine'' parametric 
        coefficient \eqref{eq:Parmuexp}, with $\Gamma=0.01$.
        The slope of the lines was estimated by a least-squares fit omitting the first three points.
    }
    \label{fig:MLBIPnoaff_G001}
\end{figure}
%
\section{Conclusions}
\label{sec:Concl}
We extended \cite{DGLGCSBIPSL} to a class of 
deterministic, multilevel Petrov-Galerkin, 
higher order Quasi-Monte Carlo integration algorithms 
for forward and Bayesian inverse computational uncertainty
quantification of possibly nonlinear, well-posed operator equations.
Novel, computable deterministic multilevel estimators have been proposed for 
``distributed'' uncertain input data in a separable Banach space $X$. 
Upon parametrizing the uncertain input data in terms of a countable
basis of $X$ (as, e.g., through a \KL expansion), 
and upon multilevel Petrov-Galerkin discretization of the forward problems, 
the forward and Bayesian inverse uncertainty quantification 
problem is reduced to numerical evaluation of high-dimensional,
parametric integrals of nonlinear functionals depending on the
likelihood function of the responses from the parametric forward problem 
and the observation data.
The numerical integration is conducted with the deterministic,
higher order QMC integrations from \cite{DLGCS14}.
The present results generalize, in particular, the 
HoQMC PG error analysis of \cite{DKGS14} to smooth, nonlinear operator equations 
with holomorphic-parametric dependence of their responses on the parameters.
They apply to broad classes of forward equations, 
with possibly indefinite or saddle point variational formulations,
and nonlinear, analytic dependence on the parameters.

In several numerical experiments for Bayesian inversion of linear, 
elliptic forward problems in two space dimensions, 
the presently proposed, multilevel higher order Quasi Monte-Carlo strategy
consistently outperformed the corresponding single-level
algorithms from \cite{DGLGCSBIPSL}, and corresponding MLMC
methods in both forward as well as
Bayesian inverse UQ on parametric inputs with 
proper sparsity: to reach one percent accuracy in the 
Bayesian estimate, the MLHoQMC strategy achieves a speedup of a factor $10$ 
of error versus total work as compared to the SL strategy and to the MLMC
approach.
Higher efficiency is expected on input data with higher smoothness
in the data space, i.e.~$u \in X_t$, implies higher sparsity; 
having said this, we admit that for problems whose parametric input data
does not afford sufficient sparsity, the presently proposed 
methods will not outperform MLMC and ML versions of first order QMC methods.

The presented numerical experiments also confirm the dimension-independence
of the QMC convergence rates, which are only limited by input sparsity
and by the digit interlacing order of the polynomial lattice rule.
They also indicate the expected deterioration of the algorithms' performance
for small observation noise covariance $\Gamma$;
in this respect, the splitting estimator was found to be less sensitive than the ratio estimator. 
The presently introduced algorithms allow us to handle uncertainties with 
several hundred to thousands of parameters, in two space dimensions,
with moderate computational effort. The parametric sparsity of the
countably parametric model problem considered in the numerical experiments
was moderate ($p_0 = 1/2$ in \eqref{eq:ParModPrb}); for 
classes of uncertain input data $u$ with higher sparsity, i.e.~smaller values of $p_0$, 
the gains of the presently proposed, HoQMC-based algorithms over MLMC
are predicted to be correspondingly higher, as a consequence of the present
theoretical results, and supported by extensive numerical experiments in \cite{GaCS14}.
\paragraph{Acknowledgments}
This work was supported by CPU time from the
Swiss National Supercomputing Centre (CSCS) under project IDs s522 and d41,
by the Swiss National Science Foundation (SNF) under Grant No. SNF149819, 
by the European Research Council (ERC) under AdG 247277, 
and by Australian Research Council's Discovery Project grants under project number DP150101770.
%
%

\end{document}